\documentclass[a4paper]{amsart}
\usepackage{amssymb}
\usepackage{amsmath}
\usepackage{amsthm}
\usepackage{hyperref}
\usepackage{enumerate}
\usepackage{mathtools}

\newtheorem{Theorem} {Theorem} [section]
\newtheorem{Proposition} [Theorem] {Proposition}
\newtheorem{Lemma} [Theorem] {Lemma}

\newcommand{\m}{\mathcal}

\newcommand{\cB}{{\mathcal B}}

\newcommand{\ff}{{\mathcal F}}

\renewcommand{\phi}{\varphi} 
\newcommand{\gauss}[2]{\genfrac{[}{]}{0pt}{}{#1}{#2}}

\title{Structure of $t$-Intersecting Families of  Vector Spaces}
\author{
Ferdinand Ihringer,
Andrey Kupavskii
}

\begin{document}

\begin{abstract}
We study $t$-intersecting and $t$-cross-intersecting families of $k$-di\-mensional subspaces in finite vector spaces of dimension $n$.
We show that all large $t$-intersecting families admit a governing low-dimensional structure for $n \ge 2k+1$.
This result, together with its cross-intersecting variant, allows us to prove analogues of several classical extremal set-theoretic results. In particular, we determine the intersecting families with the largest diversity, and we establish a Frankl-type degree–diversity result that generalizes the Hilton–Milner theorem.
Our proofs rely on simplification procedures for $t$-intersecting and $t$-cross-intersecting families of subspaces. These procedures are based on the concept of subspace spreadness, a generalization of the classical notion of spreadness for set systems.
\end{abstract}

\maketitle

\section{Introduction}
For a set $X$, write $\binom{X}{k} := \{ F \subset X: |F| = k\}$.
We use $V(n,q)$ to denote the $n$-dimensional vector space over a field of $q$ elements.
Let $\m V_n^k$ ($\m V_n^{\le k}$, $\m V_n$) stand for the family of all $k$-dimensional (all $\le k$-dimensional, all) subspaces in $V(n,q)$. We suppress $q$ from the notation for brevity.
Furthermore, write $\gauss{n}{k} = |\m V_n^k|$ and $[n] = |\m V_n^1|$, see Section~\ref{sec:gauss}
for details.

A family of sets is {\it $t$-intersecting} if any two of its sets intersect in at least $t$ elements. A family of subspaces is {\it $t$-intersecting} if any two of its subspaces intersect in a subspace of dimension at least $t$. We write \emph{intersecting} instead of $1$-intersecting.

In many extremal results for families of sets, the extremal family is governed by a structure of small uniformity. Identifying such a structure is often a key step in proving both extremal and stability results. The main goal of this paper is to develop an analogue of this approach for finite vector spaces. Its key step is to identify a low-dimensional structure in large $t$-intersecting families of subspaces (see Theorem~\ref{thmstruc}). Based on this, we prove analogues of several classical extremal set-theoretic results. We note that our approach also allows to reprove most of the known results on $t$-intersecting families of subspaces. We shall illustrate it below.

\medskip
Next, we give an overview of the literature on the subject.
\subsection{The largest \texorpdfstring{$t$}{t}-intersecting families of sets and vector spaces.}

In~\cite{EKR}, Erd\H{o}s, Ko, and Rado determined the largest $k$-uniform intersecting families of sets:

\begin{Theorem}[EKR Theorem]
 Suppose that $\ff \subset \binom{\{1, \ldots, n \}}{k}$ is an intersecting family
 and $n \geq 2k$. Then $|\ff| \leq \binom{n-1}{k-1}$. For $n \geq 2k+1$, equality holds
 only if $\ff$ is a full star: it consists of all $k$-element sets that contain some fixed element $x$.
\end{Theorem}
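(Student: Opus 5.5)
We would prove the EKR Theorem by Katona's cyclic permutation method, and then treat the uniqueness statement for $n \ge 2k+1$ separately by examining when equality can hold.

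\emph{Upper bound.} Fix a cyclic ordering $\sigma$ of $\{1,\ldots,n\}$ and call the $n$ sets consisting of $k$ consecutive elements along $\sigma$ the \emph{arcs} of $\sigma$. The key claim is that an intersecting family contains at most $k$ arcs of a given $\sigma$. To see this, suppose $A$ is an arc in $\ff$. Every other arc meeting $A$ begins or ends strictly inside $A$, which gives $2(k-1)$ candidate arcs. They split into $k-1$ pairs, where the arc ending just before position $i$ is paired with the arc starting at position $i$. Since $n \ge 2k$, the two arcs of each pair are disjoint, so $\ff$ contains at most one from each pair. Hence $\ff$ contains at most $1+(k-1)=k$ arcs of $\sigma$.

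Now double count the pairs $(\sigma, F)$ with $F \in \ff$ an arc of $\sigma$. There are $(n-1)!$ cyclic orders, and each $k$-set is an arc in exactly $k!(n-k)!$ of them. The claim therefore gives
\[
|\ff|\, k!(n-k)! \le k (n-1)!,
\]
which is exactly $|\ff| \le \binom{n-1}{k-1}$.

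\emph{Equality for $n\ge 2k+1$.} If equality holds, then every cyclic order contains exactly $k$ arcs of $\ff$. We first show that these $k$ arcs share a common element. Let $A$ be an arc in $\ff$ and look at the $k-1$ pairs above. From each pair exactly one arc lies in $\ff$. Since $n>2k$, a later-starting arc $B$ (start inside $A$) and an earlier-ending arc $C$ (end inside $A$) are disjoint whenever $C$ ends before $B$ starts. This forces a threshold structure: the chosen arcs are those ending at positions $\ge j$ together with those starting at positions $\le j$, for some $j$ in $A$. All $k$ chosen arcs then contain the element at position $j$.

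Next we pass from a single cyclic order to a global star. Apply a transposition of two adjacent elements to $\sigma$. The local common element can change only in a controlled way, and comparing the two orders shows that the element $x$ is constant. Alternatively, one can argue directly as follows. Take $F\in\ff$. If some $G\in\ff$ avoided a chosen $x\in F$, build a cyclic order in which $F$ and $G$ are arcs and the forced threshold element would have to lie in both $F\setminus\{x\}$ and $G$; this contradicts the structure. A cleaner route is this: the equality case forces, for each $\sigma$, an element $x_\sigma$ with all $\ff$-arcs of $\sigma$ passing through $x_\sigma$. Since $\ff$ has exactly $\binom{n-1}{k-1}$ sets, it suffices to show that $\bigcap \ff\neq\emptyset$, since a family of that size contained in a star is the full star.

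The main obstacle is this globalization step, namely showing that $x_\sigma$ is independent of $\sigma$. We would first try the adjacent-transposition argument. Swapping two consecutive elements $a,b$ changes only the arcs whose boundary lies between $a$ and $b$, and in each order there are $k$ arcs through $x_\sigma$. For $n\ge 2k+1$ there is room to find an arc of $\ff$ through $x_\sigma$ that avoids both $a$ and $b$, unless $x_\sigma\in\{a,b\}$. Such an arc remains an arc after the swap, which pins $x_{\sigma'}$. A short case check then shows $x_{\sigma'}=x_\sigma$. Since adjacent transpositions connect all cyclic orders, $x$ is a global common element, and $\ff$ is the full star.
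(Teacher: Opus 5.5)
The paper gives no proof of this statement. It quotes the theorem from Erd\H{o}s, Ko and Rado as background, so your proposal has to be judged on its own. Your upper bound is Katona's cycle argument and is correct.

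In the equality part, the disjointness claim behind your threshold structure is false as stated. For $k=4$, $n=9$, the order $1,\ldots,9$ and $A=\{1,2,3,4\}$, the arc $\{7,8,9,1\}$ ends before $\{4,5,6,7\}$ starts, yet both contain $7$. Only the one-gap case holds: the arc ending at position $i-1$ and the arc starting at position $i+1$ are disjoint, because together with position $i$ they fill $2k+1\le n$ consecutive positions. That is enough. If the arc ending at $i-1$ lies in $\ff$, the arc starting at $i+1$ does not, so its partner, the arc ending at $i$, does. Induction then gives your threshold form.

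The genuine gap is the globalization step. One surviving arc through $x_\sigma$ only tells you that $x_{\sigma'}$ lies in that arc, and the promised case check cannot succeed in general. Put $a,b$ at positions $0,1$. The swap changes only the arcs starting at positions $-k+1$ and $1$. In each order, the $\ff$-arcs are those starting in the window of $k$ consecutive positions ending at the center. So the two windows differ at most at $-k+1$ and $1$. For $n\ge 2k+1$ this leaves only two options: equal windows, or centers at positions $p,q$ with $\{p,q\}=\{0,1\}$.

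If $x_\sigma\notin\{a,b\}$, this gives $x_{\sigma'}=x_\sigma$. But if $x_\sigma=a$, then $x_{\sigma'}=b$ (same window) and $x_{\sigma'}=a$ (shifted window) are both consistent with everything $\sigma$ and $\sigma'$ tell you. So no local check decides this case, and your claim that it always yields $x_{\sigma'}=x_\sigma$ is unjustified.

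The fix is to never move the center.
\begin{enumerate}
\item Fix $\sigma_0$, let $x:=x_{\sigma_0}$, and use only adjacent swaps of two elements different from $x$. By the window comparison, these keep the center equal to $x$.
\item These swaps still connect all cyclic orders. A cyclic order is the linear order of the other $n-1$ elements read off starting after $x$, and adjacent transpositions generate $S_{n-1}$.
\item Hence $x_\sigma=x$ for every $\sigma$. Every $F\in\ff$ is an arc of some $\sigma$, so $x\in F$, and $|\ff|=\binom{n-1}{k-1}$ forces the full star.
\end{enumerate}
Your other two sketched routes (the ``direct'' argument and the reduction to $\bigcap\ff\neq\emptyset$) are not actual arguments and can be dropped.
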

Families consisting of sets that contain a fixed element are called {\it stars}. For $n=2k$, there are $2^{\binom{2k-1}{k}}$ extremal examples: for each pair of complementary $k$-sets $F$ and $\bar F$, exactly one of them is included in the family. Thus, there is no hope for a lower-uniformity structural description.

The problem of determining the largest $t$-intersecting $k$-uniform family turned out to be much more difficult. After a series of intermediate results, it was completely solved by Ahlswede and Khachatrian~\cite{AK}. The extremal examples, called Frankl families, have the form
$$\Big\{F\in \binom{\{1,\ldots, n\}}{k}: |F\cap \{1,\ldots, t+2i\}|\ge t+i\Big\},$$
where the optimal choice of $i$ depends on the values of $n,k,t$.

The problem of determining the largest $t$-intersecting family of vector spaces also has a long history. It was solved in most cases by Hsieh~\cite{Hsi1,Hsi2}. In the follow-up papers~\cite{DF83,FW,CP}, the remaining cases were treated and other methods for solving the problem were introduced.
In contrast to the set case, extremal families can also be meaningfully classified when $n=2k$,
see~\cite{GN,Tan}.
Altogether, we have the following result.

\begin{Theorem}[Complete $t$-Intersection Theorem for Vector Spaces]\label{thmtintintro}
 Suppose that $\ff \subset \m V^k_n$ is $t$-intersecting and $n \geq 2k$.
 Then $|\ff| \leq \gauss{n-t}{k-t}$. Equality holds only if
 \begin{itemize}
  \item $\ff = \{ F \in \m V^k_n: X \subset F \}$ for some $X \in \m V^t_n$, or
  \item $\ff = \{ F \in \m V^k_n: F \subset Y \}$ for some $Y \in \m V^{n-t}_n$ and $n=2k$.
 \end{itemize}
\end{Theorem}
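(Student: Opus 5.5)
The plan is the classical spectral (Hoffman/Delsarte) route on the Grassmann scheme, followed by a separate structural analysis of the equality case. Let $A_i$ be the adjacency matrix of the relation ``$\dim(F\cap G)=k-i$'' on $\m V^k_n$. These matrices generate the Bose--Mesner algebra of the $q$-Johnson scheme, with common eigenspaces $W_0,\dots,W_k$ and eigenvalues $P_i(j)$ given by $q$-Eberlein polynomials. A $t$-intersecting family is exactly an independent set in the graph whose edges are the pairs with $\dim(F\cap G)<t$, i.e.\ the relations $i=k-t+1,\dots,k$.

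\textbf{Step 1 (bound).} Construct a weighted adjacency matrix
$$M=\sum_{i=k-t+1}^k c_iA_i,$$
and choose the weights so that its smallest eigenvalue $\lambda_{\min}$ satisfies $-\lambda_{\min}/(\lambda_0-\lambda_{\min})=\gauss{n-t}{k-t}/\gauss{n}{k}$. Hoffman's ratio bound then gives $|\ff|\le\gauss{n-t}{k-t}$. A convenient choice of weights comes from the inclusion matrix $W_{tk}$ between $t$-spaces and $k$-spaces: take $M$ to be the matrix that reproduces the Frankl--Wilson/Wilson construction, i.e.\ $M$ is a polynomial in the $A_i$ that vanishes on $W_1,\dots,W_t$ except for a single negative eigenvalue there, and is nonnegative on $W_{t+1},\dots,W_k$. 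This follows \cite{FW}. Verifying the sign pattern of $\sum c_iP_i(j)$ for all $j$ when $n\ge 2k$ is the technical core of this step.

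\textbf{Step 2 (equality).} If equality holds, then the characteristic vector $1_\ff$ lies in $W_0\oplus(\text{eigenspaces attaining }\lambda_{\min})$. For $n\ge 2k+1$ the weights can be chosen so that the minimum is attained only on $W_1\oplus\dots\oplus W_t$. That span equals the column space of $W_{tk}^{\top}$, so $1_\ff=W_{tk}^{\top}f$ for some function $f$ on $\m V^t_n$.

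\textbf{Step 3 (classification from degree-$t$ structure).} Show that a $0/1$ vector of this form, of the given size and $t$-intersecting, is a $t$-star or, when $n=2k$, a dual object. I would argue combinatorially. Take $F,G\in\ff$ with $\dim(F\cap G)=t$. This is possible unless $\ff$ lies in a star: if all pairwise intersections have dimension $>t$, the family is $(t+1)$-intersecting, and its size is at most $\gauss{n-t-1}{k-t-1}<\gauss{n-t}{k-t}$. Then show that every member contains $X=F\cap G$ by a counting/shifting argument. If the family is not a star, a Hilton--Milner type estimate, using the degree-$t$ representation to bound how many members avoid $X$, yields a strict inequality.

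\textbf{Step 4 ($n=2k$).} Here the eigenvalue attaining the minimum also appears on an extra eigenspace, and duality $F\mapsto F^\perp$ maps $\m V^k_{2k}$ to itself. It sends stars $\{X\subset F\}$ to the families $\{F\subset X^\perp\}$, so the second family arises. The classification should follow the argument of \cite{GN,Tan}: show that $\ff$ or its dual is an eigenvector combination of the appropriate type, and reduce to Step 3.

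\textbf{Main obstacle.} I expect the main obstacle to be the exact eigenvalue inequality for general $t$ near $n=2k$, together with the uniqueness in Step 3. For the latter I would first try the approach of \cite{CP}, which avoids spectral uniqueness by induction on $t$ via quotients $V/P$ for a $1$-space $P$.
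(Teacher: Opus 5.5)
Step~1 is in effect a citation of Frankl--Wilson, and that is fine for the bound. The genuine gap is the equality case, which is the real content here, and your Steps~3--4 do not supply it.

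\textbf{The tools you name are not available.} There is no shifting operation for subspaces that preserves $t$-intersection. A Hilton--Milner-type estimate for $t$-intersecting subspaces is at least as strong as the uniqueness you want, and it is known only for $n\ge 2k+t$ (with further conditions), not for $2k+1\le n<2k+t$. It also needs an input your setup never provides: an a~priori reason that most of $\ff$ lies in a single $t$-star.

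\textbf{The data in Step~3 cannot force a star.} You use degree $\le t$, size $\gauss{n-t}{k-t}$, and $t$-intersection. For $n=2k$, the map $F\mapsto F^\perp$ preserves every relation $\dim(F\cap G)=k-i$, so it commutes with all $A_i$ and fixes each $W_j$. Hence $\{F: F\subseteq Y\}$ with $\dim Y=2k-t$ also has its characteristic vector in $W_0\oplus\cdots\oplus W_t$. So $n\ge 2k+1$ must enter Step~3 in an essential, non-spectral way, and you never say where.

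\textbf{The premise of Step~4 is mistaken.} The dual family is not explained by $\lambda_{\min}$ appearing on an extra eigenspace. For $t=1$ and the $q$-Kneser matrix at $n=2k$ one has $\lambda_j=(-1)^jq^{k(k-j)+\binom{j}{2}}$, so $\lambda_1$ is the unique minimum. Consequently, the statement ``$\ff$ or its dual lies in the right span'' holds automatically and yields nothing.

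\textbf{How to close the gap on your route.} You need a genuine classification, which is what \cite{Tan} provides. The $t$-intersection property gives width $w\le k-t$, and the degree condition gives dual width $w^*\le t$. The general inequality $w+w^*\ge k$ is therefore tight. The subsets attaining it in the Grassmann scheme are exactly the families $\{F: X\subseteq F\subseteq Y\}$, and comparing sizes yields both the stars and, for $n=2k$, the dual families.

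\textbf{The paper's route.} The paper proves only the sub-range $n\ge 2k+2$, $n-k-t\ge 9$ itself (Theorem~\ref{thmtint}) and cites the literature for the rest. Its argument supplies exactly the missing ``most of $\ff$ is in one star'' input. Theorem~\ref{thmstruc} with $s=0$ gives a single $t$-space $X$ with
\[
|\ff\setminus\ff[X]|\le \tfrac12 q^{(k-t)(n-k)}.
\]
If some $Y\in\ff$ does not contain $X$, then every member of $\ff[X]$ must meet $Y$ outside $X$. This gives $|\ff[X]|\le [k]\gauss{n-t-1}{k-t-1}\le q^{-2}\gauss{n-t}{k-t}$, and hence $|\ff|<\gauss{n-t}{k-t}$. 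Completed with Tanaka's classification, your spectral route would cover all $n\ge 2k$; the paper's route is elementary but reaches only a sub-range.
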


The list of extremal examples is considerably simpler for subspaces. We also note that the condition $n\ge 2k$ is not really restrictive: the case $k>n/2$ could be reduced to the case $k<n/2$  using duality. For illustration, we shall give a quick proof of this theorem in most cases in Section~\ref{sec:appl} (see Theorem~\ref{thmtint}).

\subsection{Stability: Hilton--Milner-type results and diversity}

Once an extremal result is obtained, a natural question concerns {\it stability}: what is the largest example that is not contained in the extremal examples? More generally, what is the trade-off between the size of a family and its similarity to the extremal configurations? In the case of intersecting families, this similarity is conveniently captured by the notion of diversity.

For a set family $\m F$, its diversity $\gamma (\m F)$ is the number of sets that do not contain the most popular element. One may think of it as the distance to the closest star. (Note that the extremal example in the EKR theorem has diversity $0$.) For a family $\ff$ of subspaces, the diversity $\gamma(\ff)$ is defined analogously as the number of subspaces not containing the most popular $1$-dimensional subspace.

Answering a question of Erd\H{o}s, Ko, and Rado concerning `non-trivial' intersecting families, Hilton and Milner proved the following.
\begin{Theorem}[Hilton--Milner Theorem~\cite{HM}]
 Suppose that $\ff \subset \binom{\{1, \ldots, n \}}{k}$ is an intersecting family, $k \geq 2$, $n \geq 2k+1$, and $\gamma(\ff) \geq 1$.
 Then $|\ff| \leq \binom{n-1}{k-1}  - \binom{n-k-1}{k-1}+1$. Equality holds only if
 \begin{itemize}
  \item $\ff = \{ F \in \binom{\{ 1, \ldots, n \}}{k}: x \in F, |F \cap Y| \geq 1 \} \cup \{ Y \}$ for some $x \in \{1, \ldots, n \}$ and some $k$-subset $Y$ with $x \notin Y$,
  \item $k=3$ and $\ff = \{ F \in \binom{\{ 1, \ldots, n \}}{k}: |F \cap X| \geq 2\}$ for some $3$-element set $X$.
 \end{itemize}
\end{Theorem}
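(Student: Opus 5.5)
We prove the Hilton--Milner bound by reducing to an intersecting family that is not a star and bounding its size through a single set it must hit. Let $\ff$ be intersecting with $\gamma(\ff)\ge 1$, and assume $\ff$ is maximal with these properties. We first apply the standard shifting operators $S_{ij}$. If a shift would create a star, we stop just before it. A short argument shows that we may then assume $\ff$ is either shifted, or becomes a star after one more shift $S_{ij}$. In the second case, every set of $\ff$ meets $\{i,j\}$.

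\textbf{Non-star case, via Frankl's degree bound.} We use the following inequality. If $\ff$ is intersecting and not a star, fix $x$ of maximal degree and choose $Y\in\ff$ with $x\notin Y$. Every $F\in\ff$ with $x\in F$ must meet $Y$. There are exactly $\binom{n-1}{k-1}-\binom{n-k-1}{k-1}$ sets containing $x$ and meeting $Y$. Hence
\[
|\ff(x)|\le \binom{n-1}{k-1}-\binom{n-k-1}{k-1}.
\]
It then suffices to bound the sets avoiding $x$ by $1$ plus whatever $\ff(x)$ loses relative to this count.

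\textbf{Trading argument for $\ff(\bar x)$.} Write $\ff(\bar x)=\{F\in\ff: x\notin F\}$ and suppose $|\ff(\bar x)|=m\ge 2$. The family $\ff(\bar x)$ is intersecting on $n-1\ge 2k$ points. The sets of $\ff(x)$ (with $x$ removed) and $\ff(\bar x)$ form a cross-intersecting pair on $[n]\setminus\{x\}$: one side is $(k-1)$-uniform, the other $k$-uniform. So the $(k-1)$-sets through $x$ that are missing from $\ff(x)$ include every $(k-1)$-set disjoint from at least one member of $\ff(\bar x)$. We must show the number of such $(k-1)$-sets is at least
\[
\binom{n-k-1}{k-1}+m-1 .
\]
This is a Kruskal--Katona/Hilton--Milner-type cross-intersecting inequality: the union of the shadows of complements grows by at least one with each new set. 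We derive it from the Kruskal--Katona theorem, or by induction on $m$ using shiftedness.

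\textbf{Equality and the main obstacle.} Equality requires either $m=1$, giving the first family, or equality in the cross-intersecting bound. For $k=3$ the latter allows $\ff(\bar x)$ to be the three sets $\{a,b,c\}$ with two of the points of a triple $X$, giving $\{F:|F\cap X|\ge 2\}$. The main obstacle is this equality analysis: shifting destroys uniqueness information. We handle it by showing each $S_{ij}$ preserves the extremal size, and that the preimage of an extremal shifted family is extremal of the same type. For the cross-intersecting inequality, we first try Frankl's $m=2$ computation, $2\binom{n-k-1}{k-1}-\binom{n-k-2}{k-1}\ge\binom{n-k-1}{k-1}+1$, valid for $n\ge 2k+1$. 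We then extend to larger $m$ by monotonicity.
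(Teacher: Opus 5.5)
The paper does not prove this statement; it quotes it from Hilton and Milner as background. Your argument therefore has to stand on its own, and as written it has a genuine gap in the trading step.

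Write $\partial(\m B)$ for the set of $(k-1)$-subsets of $[n]\setminus\{x\}$ that are disjoint from at least one member of $\m B=\ff(\bar x)$. You want the inequality $|\partial(\m B)|\ge\binom{n-k-1}{k-1}+m-1$ from Kruskal--Katona or by induction on $m$. It is false in that generality, even for intersecting and shifted $\m B$:
\begin{itemize}
\item Let $\m B$ consist of all $k$-subsets of $[n]\setminus\{x\}$ containing a fixed $y$. Then $m=\binom{n-2}{k-1}$ and $\partial(\m B)$ is exactly the family of $(k-1)$-sets avoiding $y$. So $|\partial(\m B)|=m<\binom{n-k-1}{k-1}+m-1$, because $\binom{n-k-1}{k-1}\ge k\ge 2$ for $n\ge 2k+1$.
\item Hence the claim that the union of shadows grows by at least one with each new set is wrong. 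Once $\partial(\m B)$ contains every $(k-1)$-set avoiding $y$, further sets through $y$ add nothing. The step from your correct $m=2$ computation to all $m$ ``by monotonicity'' therefore fails.
\item More generally, let $\m B$ consist of the $k$-subsets of $[n]\setminus\{x\}$ through a fixed $j$-set. Then $|\partial(\m B)|-m=\binom{n-1-j}{k-1}-\binom{n-1-j}{k-j}$, and this is at least $\binom{n-k-1}{k-1}-1$ exactly when $j\ge 2$.
\end{itemize}
So the one-set configuration is not extremal once $m$ is large, and no argument that is monotone in $m$ can work.

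What is missing is any input in this step from the global structure of $\ff$. That means non-triviality together with the maximality of $\deg x$, or shiftedness of the whole family. This is exactly what excludes the example above: there every member of $\ff(x)$ must contain $y$, so $\ff$ would be a star, contradicting $\gamma(\ff)\ge 1$.

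Note also that, for a given $\ff(\bar x)$, the largest compatible $\ff(x)$ consists precisely of the $(k-1)$-sets outside $\partial(\ff(\bar x))$. So whenever this maximal extension is not a star, your inequality is literally equivalent to the Hilton--Milner bound. The reduction relocates the difficulty rather than resolving it.

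Here is how to close the gap.
\begin{itemize}
\item In your second shifting case (every set meets $\{i,j\}$), avoid unequal uniformities. Put $\ff(i\bar j)=\{F\setminus\{i\}:F\in\ff,\ i\in F,\ j\notin F\}$ and define $\ff(\bar i j)$ symmetrically. These are non-empty cross-intersecting $(k-1)$-uniform families on $n-2\ge 2(k-1)$ points. For such families $|\ff(i\bar j)|+|\ff(\bar i j)|\le\binom{n-2}{k-1}-\binom{n-k-1}{k-1}+1$ holds for all sizes. Adding the at most $\binom{n-2}{k-2}$ sets containing both $i$ and $j$ gives the theorem.
\item The fully shifted case needs its own argument that really uses shiftedness, for instance that $\ff(\bar 1)$ is then $2$-intersecting.
\item Simply capping $m$ by the diversity bound $\binom{n-3}{k-2}$ is not available, since that bound fails near $n=2k+1$. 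For $k=4$, $n=9$, the family $\{F:|F\cap\{1,\ldots,5\}|\ge 3\}$ has diversity $17>15$.
\end{itemize}

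Finally, the equality analysis via preimages of extremal shifted families is only asserted, not proved. Your $k=3$ description is also off: for $X=\{x,a,b\}$, $\ff(\bar x)$ consists of the $n-3$ sets $\{a,b,c\}$ with $c\notin X$, not three sets.
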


The vector space analogue of the Hilton--Milner theorem was obtained by Blokhuis et al.~\cite{BBCFMPS} for all $n \geq 2k+1$ except for $(n,q) = (2k+1, 2)$. The remaining case was covered by Wang, Xu, and Zhang~\cite{WXZ}.

\begin{Theorem}[Hilton--Milner Theorem for Vector Spaces]\label{thm:HMVS}
 Suppose that $\ff \subset \m V^k_n$ is an intersecting family, $k \geq 3$, $\gamma(\ff) \geq 1$, and
 $n \geq 2k+1$.
 Then $|\ff| \leq \gauss{n-1}{k-1}  - q^{k(k-1)}\gauss{n-k-1}{k-1}+q^k$. Equality holds only if
 \begin{itemize}
  \item $\ff = \{ F \in \m V^k_n: X \subset F, \dim(F \cap Y) \geq 1 \} \cup \{ F \in \m V^k_n: F \subset X+Y \}$ for some fixed $X \in \m V^1_n$ and $Y \in \m V^k_n$ with $\dim(X \cap Y) = 0$,
  \item $\ff = \{ F \in \m V^k_n: \dim(F \cap X) \geq 2\}$ for some fixed $X \in \m V^3_n$.
 \end{itemize}
\end{Theorem}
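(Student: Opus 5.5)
We prove the Hilton--Milner Theorem for Vector Spaces by a structural reduction: find a low-dimensional subspace family that governs $\ff$, and then solve a finite optimization problem over the possible governing structures. Throughout, let $\ff$ be intersecting with $\gamma(\ff)\ge 1$, and assume $|\ff|$ is at least the claimed bound, since otherwise there is nothing to prove. The first step is to pass to a maximal intersecting family containing $\ff$, which does not decrease diversity or size. The second step is the simplification procedure based on subspace spreadness. For each $1$-space $P$, consider the link $\ff(P)=\{F\in\ff: P\subset F\}$. If some link, or more generally the restriction of $\ff$ to subspaces containing a fixed $T\in\m V^{s}_n$, is not contained in a ``spread'' part, we replace it by a lower-dimensional generator. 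The outcome is a family $\m T\subset \m V_n^{\le k}$ of subspaces of dimension at most $k$ that is itself intersecting. It satisfies $\ff\subset\langle\m T\rangle:=\{F: T\subset F\text{ for some }T\in\m T\}$, up to a remainder of size $O(q^{-1})\gauss{n-1}{k-1}$, which is negligible for $n\ge 2k+1$.

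Next comes the case analysis on $\tau=\min\{\dim T: T\in \m T\}$ and the structure of $\m T$.

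If $\m T$ contains a $1$-space $X$, then because $\gamma(\ff)\ge1$ some $Y\in\ff$ satisfies $X\not\subset Y$. Every member of $\ff$ containing $X$ must meet $Y$. Counting $k$-spaces through $X$ that meet $Y$ trivially gives $\gauss{n-1}{k-1}-q^{k(k-1)}\gauss{n-k-1}{k-1}$. The members not containing $X$ must meet all members through $X$ that meet $Y$ nontrivially. A standard argument shows that such members lie in $X+Y$, which contributes at most $\gauss{k+1}{k}-\gauss{k}{k-1}=q^k$. This yields the first extremal family.

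If $\m T$ contains no $1$-space, then every generator has dimension at least $2$. Each $2$-dimensional generator $T$ contributes at most $\gauss{n-2}{k-2}$, which is roughly a $q^{-(n-k)}$ fraction of $\gauss{n-1}{k-1}$. Since $\m T$ is intersecting, the $2$-spaces in $\m T$ either all pass through a common point or all lie in a common $3$-space $X$. In the first situation there are at most $[k]$-ish of them effectively, because they must meet a member outside the star, and the total is bounded by a quantity strictly below the Hilton--Milner number once $k\ge 3$. In the second situation we obtain $\{F:\dim(F\cap X)\ge 2\}$, of size $[3]\gauss{n-2}{k-2}-q^2[2]\gauss{n-3}{k-3}$ or similar. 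This equals the bound when $k=3$ and is strictly smaller for $k\ge4$, so it gives the second extremal family. Contributions from generators of dimension $\ge3$ are $O(q^{-2(n-k)})$ relative to the main term and are absorbed.

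The main obstacle is the sharp comparison in these last cases, together with the small case $(n,q)=(2k+1,2)$. There the error terms from the spreadness reduction are only a constant factor smaller than the gap between the Hilton--Milner bound and the competing configurations. For that case we would first try to avoid the lossy approximation: we would use the exact inequality for cross-intersecting pairs $(\ff(X),\ff\setminus\ff(X))$, via the Hilton--Milner-type cross-intersecting bound established in the cross-intersecting variant of the structural theorem. Only if that fails would we fall back on a direct counting argument following Wang--Xu--Zhang.
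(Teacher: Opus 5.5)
The paper does not prove this theorem. It is quoted from Blokhuis et al.\ and Wang--Xu--Zhang. The closest result in the paper is Theorem~\ref{thm:frankldegreetheorem} with $i=k$, which needs $\gamma(\ff)\ge q^k$ and restricted $(n,k,q)$.

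Judged on its own, your proposal has a genuine gap at the decisive step. In Case~1 you argue that members of $\ff$ not containing $X$ must meet \emph{all} $k$-spaces through $X$ that meet $Y$, and hence lie in $X+Y$. That inference requires $\ff[X]$ to contain every $k$-space through $X$ meeting $Y$, which is exactly what is unknown. In general $\ff\setminus\ff[X]$ can be far larger than $q^k$ and need not lie in a single $(k+1)$-space. For example, in $\m G_{k-1}$ it consists of $q^k[n-k]$ subspaces spread over the $[n-k]$ different $(k+1)$-spaces containing $X+Y_{k-1}$.

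What your count does prove is the easy regime $\gamma(\ff)\le q^k$. With $X$ most popular, any single $B\in\ff\setminus\ff[X]$ gives $|\ff[X]|\le\gauss{n-1}{k-1}-q^{k(k-1)}\gauss{n-k-1}{k-1}$. So $|\ff|$ is at most the bound, and equality forces your $X+Y$ configuration. The whole content of the Hilton--Milner theorem is the regime $\gamma(\ff)>q^k$. There you must show that $|\ff[X]|$ drops by at least as much as $|\ff\setminus\ff[X]|$ exceeds $q^k$. That needs a cross-intersecting stability result over the full range of $|\ff\setminus\ff[X]|$. In the paper's language this means: Theorem~\ref{thm:cross} applied with $j=k,k-1,\dots,3$ according to the size of $\m B$; the comparisons $|\m G_3|<|\m G_4|<\dots<|\m G_k|$ and $|\m G_2|=|\m G_3|$ from Lemma~\ref{lem:bndsfranklfams}; and Theorem~\ref{thmlargestdiv} to exclude families without a dominant point. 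None of this appears in your plan.

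The approximation step is also unusable as stated. The Hilton--Milner number is about $[k]\gauss{n-2}{k-2}\approx q^{2k-1-n}\gauss{n-1}{k-1}$. A remainder of order $q^{-1}\gauss{n-1}{k-1}$ is therefore not negligible; it exceeds the entire quantity you are bounding, already for $n=2k+1$. Even the sharp remainder of Theorem~\ref{thmstruc} with $t=1$, $s=0$, namely $Cq^{(k-2)(n-k)+2}$, is not smaller than the Hilton--Milner number $\approx q^{(k-2)(n-k)+k-1}$ when $k=3$ or $q$ is small.

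More fundamentally, error terms cannot be ``absorbed'' in an exact statement whose extremal family has no slack. The approximation can only locate a dominant point. After that, the argument about $(\ff[X],\ff\setminus\ff[X])$ must be exact, with a strict margin for every non-extremal configuration; that is the missing step above. Your Case~2 estimates (``$[k]$-ish'', ``absorbed'') are heuristic in the same way. Incidentally, $|\m G_2|=[3]\gauss{n-2}{k-2}-q[2]\gauss{n-3}{k-3}$, not with $q^2[2]$.

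Finally, the fallback for $(n,q)=(2k+1,2)$ is not available. Lemma~\ref{lemsimpeel2} is only a peeling lemma. Theorem~\ref{thm:cross} needs $|\m B|\ge q^k$ and excludes $n=2k+1$, $q=2$: condition~1 is vacuous for $q=2$, and condition~4 needs $n\ge 2k+13$. Deferring to Wang--Xu--Zhang amounts to citing the theorem you are asked to prove.
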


In~\cite{DhaeseleerHM}, D'haeseleer solved the Hilton--Milner problem for $t$-intersecting families if either $n \geq 2k+t$ and $q \geq 4$, or $n \geq 2k+t+1$ and $q = 3$. In~\cite{CLWZ}, Cao et al.\ proved the same result under the condition that $n \geq 2k+t+\min(4, 2t)$. Notably, this includes the case $q=2$.

For $n=2k$, it is also natural to ask for a Hilton--Milner-type result for vector spaces, but no such result is currently known. Extending work by Blokhuis, Brouwer, and Sz\H{o}nyi~\cite{BBS}, the first author proved the following in~\cite{Ihr}.

\begin{Theorem}
    Suppose that $\ff \subset \m V^k_{2k}$ is an intersecting family, $k \geq 5$, $q \geq 3$
    and $\gamma(\ff) \geq 1$. Then $|\ff| \leq (1+3q^{-1}) [k] \gauss{2k-2}{k-1}$.
\end{Theorem}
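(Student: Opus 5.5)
We propose to argue by a degree–codegree analysis around a ``point'' and a ``hyperplane'', in the spirit of Blokhuis, Brouwer and Sz\H{o}nyi. For $n=2k$ there are two kinds of large intersecting families, dual to each other: stars $\{F: P\subset F\}$ for a point $P$, and the families $\{F: F\subset H\}$ for a hyperplane $H$. So we fix a point $P$ of maximum degree $d_P$ and a hyperplane $H$ containing the maximum number $e_H$ of members of $\ff$. We then split
$$|\ff| \le d_P + e_H + |\{F\in\ff: P\not\subset F,\ F\not\subset H\}|,$$
and bound the three terms separately. Throughout we use the standard Gaussian estimates from Section~\ref{sec:gauss}. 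In particular,
$$\gauss{2k-1}{k-1}=\frac{[2k-1]}{[k]}\gauss{2k-2}{k-1},\qquad \frac{[k]^2}{[2k-1]}\le \frac{q}{q-1}\quad\text{up to lower order terms},$$
so the target bound is roughly $(1+O(q^{-1}))\gauss{2k-1}{k-1}$.

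\textbf{Bounding the star part.} Since $\gamma(\ff)\ge1$, there is some $F_0\in\ff$ with $P\not\subset F_0$. Every member through $P$ meets $F_0$, so it contains a line $P+Q$ with $Q\subset F_0$. Counting over these lines gives
$$d_P\le [k]\gauss{2k-2}{k-2}\approx q^{-1}[k]\gauss{2k-2}{k-1}.$$
Dually, fix a member $F_1\not\subset H$ (if none exists, all of $\ff$ lies in $H$ and $|\ff|\le\gauss{2k-1}{k}$, which is within the bound). Every member inside $H$ meets $F_1\cap H$, a $(k-1)$-space. This gives
$$e_H\le [k-1]\gauss{2k-2}{k-1}\cdot(\text{lower order}).$$
This is essentially the full hyperplane size, so this estimate alone is not enough.

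\textbf{The mixed part, which is the main obstacle.} We must show that members through neither $P$ nor inside $H$ number at most about $2q^{-1}[k]\gauss{2k-2}{k-1}$. Our first attempt is a weighted Hoffman/eigenvalue bound on the $q$-Kneser graph of $\m V^k_{2k}$. That graph is the disjointness graph on $k$-spaces, and $\ff$ is an independent set in it. Its eigenvalues are explicit, and the eigenspace for the largest nontrivial eigenvalue is spanned by characteristic vectors of point-stars and hyperplane-duals. Write the characteristic vector $\chi_\ff$ as its projection onto the trivial space plus this first eigenspace, plus a remainder. Independence then forces the remainder to have small norm relative to $|\ff|$. Consequently $\chi_\ff$ is close to a combination of a few stars and duals, and the maximality of $d_P$ and $e_H$ pins down the dominant ones.

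\textbf{Closing the argument.} We combine this spectral information with the intersecting condition. If two points $P,P'$ both have large degree, then any member avoiding both must meet every member through $P+P'$, and this forces strong restrictions. We handle this case through the local counting above applied at lines rather than points. The hypotheses $k\ge5$ and $q\ge3$ are used exactly to make the error terms such as $[k]\gauss{2k-2}{k-2}/\gauss{2k-2}{k-1}$ and the spectral remainder add up to at most $3q^{-1}$. We expect the delicate point to be a precise estimate of the mixed term. If the pure eigenvalue step loses too much, we would iterate: pass to the family of members not through $P$ and not in $H$, and reapply the degree bound inside the quotient geometry at a second point.
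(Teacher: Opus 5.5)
The paper gives no proof of this statement; it is quoted in the introduction from \cite{Ihr} (building on \cite{BBS}), so there is no proof to compare with. On its own terms, your proposal has a genuine gap exactly where you locate the difficulty: the mixed part is never bounded. The spectral paragraph is a heuristic, and its central claim fails in the regime where a Hilton--Milner-type bound has content. Write $\chi_\ff=\frac{|\ff|}{N}\mathbf{1}+f_1+f_{\mathrm{rest}}$ with $N=\gauss{2k}{k}$, $V_0=\langle\mathbf{1}\rangle$ and $f_1\in V_1$. The adjacency matrix $A$ of the $q$-Kneser graph on $\m V^k_{2k}$ has eigenvalue $q^{k^2}$ on $V_0$ and $-q^{k(k-1)}$ on $V_1$; all other eigenvalues have absolute value at most $q^{k(k-2)+1}$. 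If $|\ff|=\varepsilon\gauss{2k-1}{k-1}$, then $\chi_\ff^{T}A\chi_\ff=0$ gives $\|f_1\|^2=\bigl(\varepsilon\frac{q^k}{q^k+1}+O(q^{1-k})\bigr)|\ff|$. So the component outside $V_0\oplus V_1$ is small only when $|\ff|$ is close to the Hoffman bound. For families of size about $q^{-1}\gauss{2k-1}{k-1}$, it carries most of the norm. Even near the Hoffman bound, you do not supply the passage from ``close to $V_0\oplus V_1$'' to ``close to a few stars and dual stars'', nor from there to the constant $3q^{-1}$. There is also a slip in your treatment of $e_H$. Your own count gives $e_H\le[k-1]\gauss{2k-2}{k-1}=[k]\gauss{2k-2}{k-2}$, which is exactly the dual of your bound on $d_P$. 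This is less than a $\frac{1}{q-1}$-fraction of $\gauss{2k-1}{k}$, not ``essentially the full hyperplane size''.

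For the statement as printed, however, none of this machinery is needed. Take the Erd\H{o}s--Ko--Rado theorem for $n=2k$ (Theorem~\ref{thmtintintro} with $t=1$; this is also exactly what the Hoffman bound gives). Every intersecting $\ff\subset\m V^k_{2k}$ satisfies $|\ff|\le\gauss{2k-1}{k-1}=\frac{[2k-1]}{[k]}\gauss{2k-2}{k-1}$. Moreover $[2k-1]<[k]^2$, because $(q-1)^2\bigl([k]^2-[2k-1]\bigr)=q^{2k-1}-2q^k+q>0$. Hence $|\ff|<[k]\gauss{2k-2}{k-1}<(1+3q^{-1})[k]\gauss{2k-2}{k-1}$, without using $k\ge5$, $q\ge3$ or $\gamma(\ff)\ge1$. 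You wrote down the identity $\gauss{2k-1}{k-1}=\frac{[2k-1]}{[k]}\gauss{2k-2}{k-1}$ yourself and noticed that dual stars fit under the bound. But you treated $[k]^2/[2k-1]$ only asymptotically; it is at least $1$ exactly, so the target already exceeds the EKR bound. The substantive result of \cite{Ihr} is presumably a sharper one that also excludes dual stars, with a bound of order $[k]\gauss{2k-2}{k-2}$, the size of the Hilton--Milner-type examples. For that version your bounds on $d_P$ and $e_H$ are the natural starting point, but the mixed term is the entire difficulty, and your proposal does not handle it.
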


The bound in this theorem asymptotically (as $q \to \infty$) matches the size of the two candidate extremal families. Determining the exact Hilton--Milner result for $n=2k$ remains one of the most interesting open problems in this area.

Frankl~\cite{FranklDeg} obtained a stability result for the EKR theorem that substantially generalizes the Hilton--Milner theorem. It was later proved in diversity form by Kupavskii and Zakharov~\cite{KZ}.

\begin{Theorem}\label{thm:degsets}
 Suppose that $\ff \subset \binom{\{1, \ldots, n \}}{k}$ is an intersecting family
 with $n > 2k > 0$ and $\gamma(\ff) \geq \binom{n-u-1}{k-u}$ for some real $3 \leq u \leq k$.
 Then $|\ff| \leq \binom{n-1}{k-1} + \binom{n-u-1}{k-u} - \binom{n-u-1}{k-1}$.
 Equality holds only if $u$ is an integer and
 \[\ff = \{ F \in \binom{\{1, \ldots, n\}}{k}: x \in F, |F \cap Y| \geq 1 \}
 \cup \{ F \in \binom{\{1, \ldots, n\}}{k}: Y \subset F \}, \]
 for some fixed element $x$ and $Y \in \binom{\{1, \ldots, n\}}{u}$ with $x \notin Y$.
\end{Theorem}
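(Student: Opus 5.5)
This statement is the set-theoretic result quoted from \cite{FranklDeg,KZ}; here is the argument I would give. Let $x$ be an element of maximal degree, so that $\gamma(\ff)=|\ff(\bar x)|$, where $\ff(\bar x)=\{F\in\ff: x\notin F\}$. Put $m=n-1$ and work on the ground set $[n]\setminus\{x\}\cong[m]$. Define
\[
\m A=\{F\setminus\{x\}: x\in F\in\ff\}\subset\binom{[m]}{k-1},\qquad \m B=\ff(\bar x)\subset\binom{[m]}{k}.
\]
Since $\ff$ is intersecting, $\m A$ and $\m B$ are cross-intersecting, and $|\ff|=|\m A|+|\m B|$. The task is then to maximize $|\m A|+|\m B|$ over cross-intersecting pairs with $|\m B|\ge\binom{m-u}{k-u}$. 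We also keep the extra constraint coming from the choice of $x$ as a most popular element: since $\ff$ is intersecting, this gives the standard bound $|\m B|\le\binom{n-3}{k-2}$.

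The first step is Hilton's lemma, a consequence of Kruskal--Katona. It says that if $\m A,\m B$ are cross-intersecting, then so are the lexicographically initial segments $\m L(m,k-1,|\m A|)$ and $\m L(m,k,|\m B|)$. Hence we may assume both families are lex-initial, and for each admissible size $b=|\m B|$ we may take $|\m A|$ as large as possible, namely $|\m A|=f(b)$, the number of $(k-1)$-sets meeting every set in the lex-initial family of size $b$.

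For $b=\binom{m-u}{k-u}$ with $u$ an integer, the lex-initial $\m B$ is exactly $\{B: [u]\subset B\}$. The $(k-1)$-sets meeting all of these are those meeting $[u]$; this uses $m\ge k-1+u$, which holds since $n>2k$. Therefore
\[
f(b)=\binom{m}{k-1}-\binom{m-u}{k-1},
\]
which gives precisely the claimed bound. Real $u$ is handled the same way, via the Lovász form of the binomial coefficients.

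The main step is to show that $g(b)=b+f(b)$ is non-increasing for $\binom{m-u}{k-u}\le b\le\binom{n-3}{k-2}$. I would argue block by block in the lex order. Passing from $\{B\supset[u]\}$ to $\{B\supset[u-1]\}$, i.e.\ decreasing $u$ by one, adds $\binom{m-u}{k-u+1}$ sets to $\m B$. It removes from $\m A$ all $(k-1)$-sets meeting $[u]$ only in the element $u$, and there are $\binom{m-u}{k-2}$ of them. The inequality $\binom{m-u}{k-2}\ge\binom{m-u}{k-u+1}$ holds for $u\ge3$ and $m-u\ge k-2+k-u+1$, i.e.\ $n>2k$. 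Inside a block one recurses on the lex structure: the new sets added to $\m B$ are $[u-1]\cup C$ with $C$ in a lex segment, and the sets removed from $\m A$ are $\{u\}\cup D$ with $D$ disjoint from all such $C$. This is the same cross-intersecting problem one level down, so an induction on $k$ handles partial blocks.

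This monotonicity comparison, and especially keeping the interpolation for partial blocks and real $u$ clean, is the step I expect to be the main obstacle. The cap $b\le\binom{n-3}{k-2}$ is exactly what prevents $g$ from turning upward; it corresponds to $u=3$, which is why the theorem requires $u\ge3$.

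For the equality case, I would trace strictness through the argument. Equality forces $b=\binom{m-u}{k-u}$ with $u$ an integer, since for non-integer $u$ the block inequality is strict. The uniqueness in Hilton's lemma and in Kruskal--Katona, for these "full star" sizes, then forces $\m B=\{B: Y\subset B\}$ for some $u$-set $Y$ and $\m A=\{A: A\cap Y\neq\emptyset\}$. This yields exactly the described extremal family.
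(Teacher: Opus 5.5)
\textbf{The gap is the cap.} Your monotonicity step rests on $|\m B|\le\binom{n-3}{k-2}$, and this bound does not follow from $x$ being a most popular element of an intersecting family. It is exactly the maximum-diversity statement for sets, which the paper cites as established only for $n>36k$. It is also false close to $n=2k$. For instance, take $\ff=\{F\in\binom{[n]}{k}: |F\cap[5]|\ge 3\}$. This family is intersecting, every element of $[5]$ has maximum degree, and $\gamma(\ff)=4\binom{n-5}{k-3}+\binom{n-5}{k-4}$. Since $\binom{n-3}{k-2}=\binom{n-5}{k-2}+2\binom{n-5}{k-3}+\binom{n-5}{k-4}$, the diversity exceeds $\binom{n-3}{k-2}$ precisely when $n<3k-2$; for $k=4$, $n=9$ you get $17>15$. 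Such a family satisfies the hypothesis with $u=3$, so the theorem covers it, but your argument does not.

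The cap also cannot simply be dropped. After Hilton's lemma the only information left is cross-intersection; a lex-initial family of at most $\binom{m-1}{k-1}$ sets is automatically intersecting, so nothing else survives. Moreover, $b+f(b)$ genuinely increases once $b>\binom{m-2}{k-2}$: at $b=\binom{m-1}{k-1}$ it equals $\binom{m}{k-1}=\binom{n-1}{k-1}$, which exceeds the claimed bound. What excludes such configurations is the maximality of $\deg x$, and that is exactly what the lex replacement destroys.

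So you need a separate idea for the large-diversity regime, one that uses the maximum degree before compressing. For example, the Lov\'asz form of Kruskal--Katona shows that $|\m B|\ge\binom{m-u}{m-k}$ forces $\Delta(\ff)=|\m A|\le\binom{n-1}{k-1}-\binom{n-u-1}{k-1}$. This reduces the statement to Frankl's maximum-degree theorem. However, the small-degree regime is precisely where that theorem needs more than compression (Frankl's proof uses shifting). It must then be cited or reproved, and your block analysis becomes redundant.

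Two smaller inaccuracies:
\begin{enumerate}
\item In your parametrization, $\binom{n-3}{k-2}=\binom{m-2}{k-2}$ corresponds to $u=2$, not $u=3$.
\item $b+f(b)$ is not monotone step by step inside a block, since adding one set $[u-1]\cup C$ may remove no new member of $\m A$. What is true, and what suffices, is that inside a block $b+f(b)$ never exceeds its value at the start of the block. This follows from the local LYM inequality: at least as many $(k-2)$-sets $D$ miss some added $C$ as there are added sets $C$.
\end{enumerate}
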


Let us define the vector space analogs of the families from the diversity theorem above.
Let $i \in \{ 2, \ldots, k \}$.
Fix a $1$-dimensional subspace $X$ and an $i$-dimensional subspace $Y_i$ disjoint from $X$.
Then define $\m G_i$ by
\begin{equation*}
    \m G_i = \{F\in \m V_n^k: X\subset F, \dim(F \cap Y_i) \geq 1\}\cup\{G\in \m V_n^k: Y_i\subset X+G\}.
\end{equation*}
One of the main results of this paper is the following vector space analog of Theorem~\ref{thm:degsets}. 

\begin{Theorem}[Frankl Degree-Type Theorem for Vector Spaces]\label{thm:frankldegreetheorem_short}
     Suppose that $\ff \subset \m V^k_n$ is an intersecting family. Assume that for some $i=3,\ldots, k$ we have $\gamma(\ff)\ge q^{k} \gauss{n-i-1}{k-i}$.
    Then  $|\ff|\le |\m G_i|$ under mild conditions on $n$ and $k$, or $q$.
    For $i \geq 4$, the bound is tight only if the family is isomorphic to $\m G_i$;
    for $i=3$, the bound is tight only if the family is isomorphic to $\m G_2$ or $\m G_3$.
\end{Theorem}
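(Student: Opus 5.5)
Let $X$ be the most popular $1$-dimensional subspace of $\ff$. I split $\ff$ into $\ff(X)=\{F\in\ff: X\subset F\}$ and $\ff(\bar X)=\ff\setminus\ff(X)$, so that $|\ff(\bar X)|=\gamma(\ff)$. The two parts are cross-intersecting. I apply the structural theorem (Theorem~\ref{thmstruc}, in its cross-intersecting variant) to the pair $(\ff(X),\ff(\bar X))$, viewing $\ff(X)$ as a family of $(k-1)$-spaces in $V/X$. This yields a low-dimensional ``kernel'' family $\m S$ of subspaces of dimension at most some $s$ not containing $X$. Almost every member of $\ff(\bar X)$ contains some $S\in\m S$, while $\ff(X)$ meets every element of $\m S$ and is contained, up to a small remainder, in the family it induces. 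The mild conditions on $n,k,q$ are there so that the error terms in this spread-approximation are negligible compared with $q^{k}\gauss{n-i-1}{k-i}$.

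Next I reduce to a clean extremal problem on the kernel. For any spread-approximated configuration, the bound is
\[|\ff|\le \#\{F\ni X:\ F \text{ meets every } G\in\ff(\bar X)\}+|\ff(\bar X)|.\]
Here I use the observation that for a fixed $\ff(\bar X)$, the best $\ff(X)$ consists of all spaces through $X$ that intersect every member. I then argue by a shifting/monotonicity argument that the total is maximized when $\ff(\bar X)$ is generated by a single space $Y$ of dimension $j$ disjoint from $X$. In that case $\ff(\bar X)$ consists of all $k$-spaces $G\not\ni X$ with $Y\subset X+G$, of size about $q^{k}\gauss{n-j-1}{k-j}$, which is exactly $|\m G_j|-|\{F\ni X:F\cap Y\neq 0\}|$. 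To justify this, I compare the ``loss'' in $\ff(X)$ (spaces through $X$ avoiding some kernel element) with the ``gain'' $|\ff(\bar X)|$. Each kernel element of dimension $j$ costs roughly $q^{(k-1)j}\gauss{n-j-1}{k-1}\cdot$(density) in $\ff(X)$ and contributes only $q^{k}\gauss{n-j-1}{k-j}$. The function $g(j)=|\m G_j|$ is then checked to be decreasing in $j$ for $j\ge 3$, with $g(2)=g(3)$; this mirrors the set case, where $\m G_2$ and $\m G_3$ coincide in size. The diversity constraint $\gamma\ge q^k\gauss{n-i-1}{k-i}$ forces the kernel to contain an element of dimension at most $i$, or many elements of larger dimension, whose combined cost exceeds that of one $i$-space. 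Hence $|\ff|\le g(i)$.

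For uniqueness I trace back the equality cases. The remainder terms must vanish, so $\ff(\bar X)$ is exactly generated by one $Y$ of dimension $i$, or of dimension $2$ when $i=3$, and $\ff(X)$ is then forced to be the full family of spaces through $X$ meeting $Y$, which gives $\m G_i$. I expect the main obstacle to be the second step. Unlike in the set case, the kernel elements can overlap in complicated ways, and the contributions of several kernel elements to $\ff(\bar X)$ are not additive. Establishing that a single generator is optimal therefore needs careful Gaussian-coefficient estimates, e.g.\ bounding $|\ff(\bar X)|$ by a sum over kernel elements and bounding the loss from below via a Bonferroni-type inclusion–exclusion. These estimates are delicate precisely at the threshold, where the diversity is barely $q^k\gauss{n-i-1}{k-i}$, and for small $q$.
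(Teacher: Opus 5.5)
Splitting at the most popular point $X$ and treating $\m A=\ff[X]$, $\m B=\ff\setminus\ff[X]$ (your $\ff(X)$, $\ff(\bar X)$) as a cross-intersecting pair to be peeled is the right frame. However, there is a genuine gap exactly where you locate the main obstacle, and it is compounded by a sign error.

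First, $|\m G_j|$ is \emph{increasing} in $j$ for $j\ge 3$, not decreasing. Lemma~\ref{lem:bndsfranklfams} gives $|\m G_j|>|\m G_{j-1}|$ for $4\le j\le k$ and $|\m G_2|=|\m G_3|$; for example, $q=2$, $k=4$, $n=9$ gives $|\m G_3|=18291<35731=|\m G_4|$. In your own framework the diversity hypothesis only excludes generators of dimension larger than $i$, so you need $|\m G_j|\le|\m G_i|$ for $j\le i$. With your stated monotonicity the concluding ``hence $|\ff|\le g(i)$'' does not follow, and if $|\m G_j|$ really decreased, $\m G_3$ would be a counterexample for $i\ge 4$.

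Second, and more seriously, the claim that $|\ff(X)|+|\ff(\bar X)|$ is maximized when $\ff(\bar X)$ is generated by one subspace is the whole content of the theorem, and you do not prove it. Set-theoretic shifting has no ready analogue for subspaces, and a cost/gain heuristic plus a Bonferroni remark does not amount to an argument. The paper never reduces to a single generator. With $\tilde{\m B}=\{(X+B)/X: B\in\m B\}$, Theorem~\ref{thm:cross} proceeds as follows.
\begin{enumerate}
\item It first proves non-concentration: unless $\tilde{\m B}$ is exactly the star of an $i$-space (the extremal case), no $i$-space lies in more than a $q^{2k-n}$-fraction of $\tilde{\m B}$. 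Otherwise every member of $\m A$ must meet that $i$-space, and a direct count gives $|\m A|+|\m B|<|\m G_i|$.
\item After peeling, it only needs that the kernel $\m C^b$ of $\tilde{\m B}$ has common intersection $Y$ with $\dim Y\le i-1$. The surviving members of $\m A$ that meet $Y$ number at most $[i-1]\gauss{n-2}{k-2}$, roughly $q^{-1}|\m G_i|$. Those avoiding $Y$ must contain a point of some $F\in\m C^b$ and a different point of some $F'\in\m C^b$, which is a lower-order count.
\item $|\m B|$ is small compared with $|\m G_i|$ because of the \emph{upper} end $\delta q^k\gauss{n-i}{k-i+1}$ of the diversity window, which your outline never uses.
\end{enumerate}
So every non-extremal configuration loses a constant factor. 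This, not letting ``remainder terms vanish'', is where uniqueness comes from. The main theorem then applies Theorem~\ref{thm:cross} in the window $j\le i$ containing $\gamma(\ff)$ and uses the correct monotonicity. Diversities above the $\m G_3$-window need separate input (Theorem~\ref{thmlargestdiv} and its stability part, plus a size comparison), which you do not address.

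Your assertion that the errors of a spread approximation of $\ff(\bar X)$ are negligible compared with $q^k\gauss{n-i-1}{k-i}$ is also false in general. In cross peeling, the spreadness threshold for simplifying $\tilde{\m B}$ is dictated by the dimension of the members of the \emph{other} family. Against $\m A(X)$, i.e.\ $\m A$ viewed as $(k-1)$-spaces in $V/X$, Lemma~\ref{lemsimpeel2}, part~3 only bounds the level-$j$ remainder by about $[k-1]^j\gauss{n-j-1}{k-j}$. For $j=k$ this is of order $q^{k(k-2)}$, whereas for $i=k$ the hypothesis only guarantees $|\tilde{\m B}|\ge 1$, i.e.\ $|\ff(\bar X)|\ge q^k$.

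The paper gets around this with an asymmetric two-stage peeling. It first peels $\m A(X)$ down to dimension $\lfloor (k-i)/2\rfloor+1$, measuring that loss against the far larger $|\m G_i|$ rather than against $|\m B|$. This lowers the threshold for $\tilde{\m B}$ to $[\lfloor (k-i)/2\rfloor+1]$. It then peels $\tilde{\m B}$ only down to dimension $\lfloor (k+i)/2\rfloor$, accepting a loss of up to a $(1-q^{-1})$-fraction of $\tilde{\m B}$. Such a non-negligible loss is harmless precisely because of non-concentration: if the surviving kernel had a common $i$-space, that $i$-space would lie in more than a $q^{-1}\ge q^{2k-n}$ fraction of $\tilde{\m B}$.

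The ideas your proposal is missing are therefore two: non-concentration of $\tilde{\m B}$ on $i$-spaces, and this alternating peeling, whose thresholds adapt to the current dimension of the other family.
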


A precise statement can be found in Theorem~\ref{thm:frankldegreetheorem}.

\subsection{Further results on intersecting families.}

A question addressed in several recent papers is to determine the intersecting families with the largest diversity. So far, the strongest result is due to Frankl and Wang, who solved the problem for $n > 36k$~\cite{FWDiversity}. We obtain an analogous result for vector spaces.

\begin{Theorem}\label{thm:maxdivshort}
Under mild conditions on $n$ and $k$, or on $q$, the family $\{ F \in \m V^k_n: \dim(F \cap X) \geq 2\}$ for $X \in \m V^3_n$
has the largest diversity among all intersecting families in $\m V^k_n$.
\end{Theorem}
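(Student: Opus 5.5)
The plan is to reduce the question to families governed by a small structure, and then compare diversities within that finite list. Let $\ff\subset\m V^k_n$ be intersecting with maximum diversity, and write $\m T_3=\{F:\dim(F\cap X)\ge 2\}$ for $X\in\m V^3_n$. By inclusion--exclusion over the points of $X$, the largest degree of a point in $\m T_3$ is about $(q+1)\gauss{n-2}{k-2}$, while $|\m T_3|\approx [3]\gauss{n-2}{k-2}$. Hence
\[
\gamma(\m T_3)\approx q^2\gauss{n-2}{k-2}.
\]
This lower bound is the benchmark. Since $\gamma(\ff)\ge\gamma(\m T_3)$ and $\gamma(\ff)\le|\ff|$, the family $\ff$ is large: its size is of order $\gauss{n-2}{k-2}$ times a constant.

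\textbf{Step 1 (structure).} I apply the structural result (Theorem~\ref{thmstruc}) to $\ff$ with $t=1$. It provides a family $\m S$ of low-dimensional subspaces (dimension at most some small $s$) with three properties: $\m S$ is intersecting; every $F\in\ff$ outside a small remainder $\ff'$ contains a member of $\m S$; and $|\ff'|$ is much smaller than $\gauss{n-2}{k-2}$. So, up to an error of lower order, $\ff\subset\bigcup_{S\in\m S}\{F:S\subset F\}$.

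\textbf{Step 2 (1-dimensional members).} If $\m S$ contains a $1$-dimensional space $P$, then every other member of $\m S$ contains $P$ or meets it. Comparing degrees, almost all of $\ff$ then contains $P$. The diversity is at most the number of sets avoiding $P$. Each such set must meet all the sets through $P$ that $\ff$ contains. A Hilton--Milner-type count, Theorem~\ref{thm:HMVS}, bounds their number by roughly $q^k\gauss{n-k-1}{k-1}$ times a small factor, which is far below $q^2\gauss{n-2}{k-2}$ for $n\ge 2k+c$. So we may assume all members of $\m S$ have dimension at least $2$.

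\textbf{Step 3 (2-dimensional members).} The count is dominated by the $2$-dimensional members $\m S_2$, since $\ell$-dimensional members contribute only $O(\gauss{n-\ell}{k-\ell})$. The family $\m S_2$ consists of pairwise intersecting lines in the projective sense. So either all lines in $\m S_2$ pass through a common point, or they all lie in a common plane $X\in\m V^3_n$.
\begin{itemize}
\item If they pass through a common point $P$, then $P$ is contained in everything except $O(\gauss{n-3}{k-3})$ sets plus $|\ff'|$. The diversity is then too small.
\item If they lie in a common plane $X$, then $\ff$ is, up to small errors, contained in $\m T_3$. Here I run an exact local argument. Assume $\ff\not\subset\m T_3$, and take $G\in\ff$ with $\dim(G\cap X)\le 1$. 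Then every $F\in\ff$ containing a line $L\subset X$ must meet $G$. This excludes a constant fraction (about $1-O(q^{-1})$, suitably) of the sets through lines of $X$ avoiding $G\cap X$. The loss in diversity exceeds the at most $\gauss{n-2}{k-2}$-order gain from sets like $G$.
\end{itemize}
Hence $\ff\subset\m T_3$, and since $\m T_3$ is itself intersecting, maximality gives $\ff=\m T_3$.

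\textbf{Main obstacle.} I expect the hardest part to be making the error terms in Step 1 small enough relative to the gap between $q^2\gauss{n-2}{k-2}$ and the competitors. The main competitors are the degree-type families $\m G_i$, whose diversity is about $q^k\gauss{n-i-1}{k-i}$, and mixtures of lines in $X$ with higher-dimensional members of $\m S$. This is where the ``mild conditions'' on $n,k$ (or a large $q$) are used. I would first try the cruder bound in which each member of dimension at least $3$ of $\m S$ contributes at most $\gauss{n-3}{k-3}$, with the number of such members bounded in terms of $k$ by the spread approximation. This should suffice when $n\ge 2k+c$ or $q$ is large.
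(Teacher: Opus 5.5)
Your route is the paper's. Theorem~\ref{thmstruc} with $t=s=1$ yields an intersecting kernel $\m C_2\subset\m V_n^{\le 2}$ with remainder $|\ff'|\le Cq^{(k-3)(n-k)+6}$. The kernel is either a star at a point or consists of $2$-spaces inside a $3$-space $X$. In the latter case, a member $G$ with $\dim(G\cap X)\le 1$ forces every member through each of the at least $q^2$ two-dimensional subspaces of $X$ missing $G\cap X$ to meet $G$. This leaves at most $[k]\gauss{n-3}{k-3}$ such members per subspace. Once you fix $s=1$, there are no kernel members of dimension $\ge 3$. So the obstacle you anticipate reduces to comparing $|\ff'|$ and the local count with $\gamma(\m T_3)=q^k\gauss{n-3}{k-2}\ge q^{(k-2)(n-k)+2}$, which is where the paper's conditions on $n,k,q$ come from.

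There are three corrections.

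\textbf{Step 2.} The Hilton--Milner theorem bounds $|\ff|$ for non-trivial families, not the number of members avoiding $P$. Moreover, the comparison $q^k\gauss{n-k-1}{k-1}\ll q^2\gauss{n-2}{k-2}$ you state fails once $n$ is large compared with $k^2$. The step is also unnecessary: a star kernel at $P$ gives $\ff\setminus\ff'\subset\ff[P]$, hence $\gamma(\ff)\le|\ff'|$, exactly as in your first bullet of Step 3.

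\textbf{The last step.} Your ``loss versus gain'' heuristic is best made rigorous as follows. If $\dim(G\cap X)=1$, then $\gamma(\ff)\le|\ff\setminus\ff[G\cap X]|\le|\ff'|+q^2[k]\gauss{n-3}{k-3}$. If $G\cap X=0$, then $\gamma(\ff)\le|\ff|\le|\ff'|+[3][k]\gauss{n-3}{k-3}$. This is slightly sharper than the paper's argument, which bounds $|\ff[\m C_2]|$ including the unconstrained term $(q+1)\gauss{n-2}{k-2}$ and therefore needs a separate case analysis for $q=2$.

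\textbf{The conclusion.} $\ff\subset\m T_3$ already gives $\gamma(\ff)\le\gamma(\m T_3)$, because diversity is monotone under inclusion. The equality $\ff=\m T_3$ does not follow: deleting a member containing $X$ leaves the diversity unchanged.
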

A precise statement can be found in Theorem~\ref{thmlargestdiv}.

Another classical line of research on intersecting families concerns families with a lower bound on `relative' diversity. Specifically, one fixes some $\alpha\in (0,1)$ and asks for the largest intersecting family in which no element is contained in more than an $\alpha$-proportion of sets. Results of this type were obtained by Frankl~\cite{F78} and F\"uredi~\cite{Fu78}.

In this paper, we prove an analogous result for families of subspaces; see Theorem~\ref{thmdiv}. We also show that there are no regular intersecting families of subspaces, see Proposition~\ref{propreg}.

There has been substantial recent work on intersecting families of vector spaces.
For instance, in~\cite{ST}, the authors obtained a product-type inequality on the sizes of two cross-intersecting families of vector spaces; in~\cite{SZ}, the authors proved a degree analogue of the EKR theorem for vector spaces; in~\cite[Theorem 2.41]{Mus}, the author described the three largest examples of intersecting families in vector spaces;
and in~\cite{TT} intersecting families with a biased measure are studied.
One useful result is due to De Boeck, who gives a complete list of all maximal intersecting families of $3$-subspaces of size at least $3q^4+3q^3+2q^2+q+1$.
We give a condensed version of this list in Appendix~\ref{sec:planes}.

\subsection{Low-dimensional structure of \texorpdfstring{$t$}{t}-intersecting families.}

The key ingredient for our results stated above is a low-dimensional approximation of large intersecting families in vector spaces.
For classical intersecting families, low-uniformity kernels (or bases) that define or approximate the family have been used since the early Delta-system method; see the aforementioned papers~\cite{F78, Fu78} or a discussion in a recent survey on Delta-systems by the second author~\cite{Kup78}. A related approach was initiated by Dinur and Friedgut~\cite{DF09}, who studied intersecting families using junta approximations, where a junta is a family defined by a constant number of coordinates.

A more efficient approach to constructing low-uniformity kernels was developed in a recent paper by Zakharov and the second author~\cite{KZ}, where they introduced the peeling-simplification procedure. This approach was further refined in the subsequent work of the second author, see, e.g.,~\cite{Kup77}. In this paper, we adapt this approach to vector spaces by introducing the notion of $r$-subspace spreadness and developing a peeling-simplification procedure based on this notion; see Section~\ref{sec:peeling}.

Before stating the low-dimensional approximation result for intersecting families of vector spaces, let us present a series of examples of $t$-intersecting families of subspaces.
Given a family of subspaces $\m C\subset \m V_n^m$, we define
$$\m C(X):=\{S/X: S\in \m C, X\subset S\}.$$
We also use the notation
$$\m C[X]:=\{S: S\in \m C, X\subset S\},$$
and, more generally, for a family $\m X$,
$$\m C[\m X]:=\bigcup_{X\in \m X}\m C[X].$$
Fix a subspace $X_i^t$ of dimension $t+2i$ and consider the family $\m V(t,i)$ of subspaces of $X_i^t$ of dimension $t+i$. Put $\m K_i^{k,t}:=\m V_n^{k}[\m V(t,i)]$. In words, $\m K_i^{k,t}$ consists of all $k$-subspaces that contain a subspace from $\m V(t,i)$. It is easy to see that $\m V(t,i)$, and consequently $\m K_i^{k,t}$, is $t$-intersecting.
Standard bounds on the Gaussian coefficients, see Lemma~\ref{lem:disj_subspaces} and Lemma~\ref{lem:simple_gauss_bnds}, show that
\begin{align*}
|\m K_i^{k,t}|
&\ge \gauss{t+2i}{t+i} q^{(k-t-i)i}\gauss{n-(t+2i)}{k-t-i}\\
&\ge q^{(t+i)i +(k-t-i)i+(k-t-i)(n-k-i)}\\
&= q^{(k-t)(n-k)-i(n-k-t-i)}.
\end{align*}
Our main technical contribution, which describes the structure of large intersecting families, shows that this bound is essentially tight.

\begin{Theorem}[Structural Theorem for Intersecting Families of Subspaces]\label{thmstruc}
    Fix some positive integers $n,k,t$ and a non-negative integer $s$ such that $k\ge s+t+1$ and $n\ge 2k+s+1$. Assume that $\m F\subset \m V_n^k$ is $t$-intersecting. Then there exists a $t$-intersecting family $\m C_{s+t}$ of subspaces of dimension at most $s+t$ such that the number of subspaces in $\m F$ that do not contain a subspace in $\m C_{s+t}$ is bounded.
Specifically,
\[
      |\m F\setminus \m F[\m C_{s+t}]|\le  C q^{(k-t)(n-k)-(s+1)(n-k-t-s-1)},\]
where
    $C$ can be taken to be $2\big(\frac q{q-1}\big)^{k-t+1}(1 + \frac{q+1}{q^2-q-1})^2$ for $n = 2k+s+1$ and $\big(\frac q{q-1}\big)^{s+3}(1 + \frac{q+1}{q^2-q-1})^2$ for $n\ge 2k+s+2.$
\end{Theorem}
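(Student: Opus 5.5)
We will prove Theorem~\ref{thmstruc} with the peeling--simplification scheme of Kupavskii--Zakharov, adapted to subspaces via $r$-subspace spreadness. Call a family $\m A\subset \m V_n^{k'}$ \emph{$r$-subspace spread} if $|\m A[Z]|\le r^{-\dim Z}|\m A|$ for every subspace $Z$ (in the normalized sense $|\m A(Z)|\le r^{-\dim Z}|\m A|$).

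\textbf{Step 1: spread decomposition.} Starting from $\m F$, we greedily peel off spread pieces. If $\m F$ is not $r$-spread, take a subspace $Z$ of maximal violation, i.e.\ maximizing $r^{\dim Z}|\m F(Z)|/|\m F|$. Then $\m F(Z)$ is $r$-spread as a family in $V(n,q)/Z$. Record the pair $(Z,\m F(Z))$, remove $\m F[Z]$ from $\m F$, and iterate. This yields a partition of $\m F$ into pieces $\m F[Z_j]$ with $\m F(Z_j)$ spread, plus a residual part. We only keep pieces with $\dim Z_j\le s+t$; everything lying in pieces with $\dim Z_j\ge s+t+1$ is discarded. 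The parameter $r$ will be of order $q^{n-k-\text{const}}$, i.e.\ comparable to the ratio $\gauss{n-d}{k-d}/\gauss{n-d-1}{k-d-1}\approx q^{n-k}$.

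\textbf{Step 2: spread families force intersection of kernels.} The key lemma is a subspace analogue of the statement that spread families contain disjoint-ish members. Suppose $\m A(Z)$ and $\m B(W)$ are $r$-spread with $\dim Z+\dim W$ small. Suppose also that $\dim(Z\cap W)<t$. Then there exist $A\supset Z$ and $B\supset W$ in $\m F$ with $\dim(A\cap B)<t$. We prove this by a counting/union bound. For fixed $A$, the $B$'s meeting $A$ in excess dimension beyond $Z\cap W$ must contain some $1$-dimensional subspace of $A/W$-type. There are about $[k]$ such choices, each capturing at most an $r^{-1}$ fraction of $\m B(W)$. Hence if $r>c\,[k]$ such a pair exists. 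Contrapositively, the kernels $\m C_{s+t}:=\{Z_j\}$ form a $t$-intersecting family. If a piece is itself non-$t$-intersecting with its own kernel (the case $Z=W$), the same argument shows that $\dim Z\ge t$ anyway. This is where $n\ge 2k+s+1$ enters: the count $[k]\cdot r^{-1}$ must be $<1$, and the extra dimension guarantees room for quotient spaces.

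\textbf{Step 3: bounding the discarded part.} A piece $\m F[Z]$ with $\dim Z=d\ge s+t+1$ is bounded via spreadness. Its maximal-violation choice gives $|\m F[Z]|\le r^{-(d-\dim Z')}\cdots$. More directly, we intersect with another kernel. Since $\m F$ is $t$-intersecting, $\m F[Z]$ is covered by sets meeting a fixed member $F_0$ in $\ge t$ dimensions. Iterating the covering argument $s+1$ times yields at most $\gauss{k}{1}^{s+1}$ choices of an extra subspace. Each choice costs a factor $r^{-1}$ relative to the trivial bound $\gauss{n-t}{k-t}\approx q^{(k-t)(n-k)}$. We then sum over the number of kernels via a geometric series, which produces the constant $C$ with its factors $(q/(q-1))^{\cdot}$ and $(1+\frac{q+1}{q^2-q-1})^2$. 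For the residual part (a spread family with no small kernel) the same bound applies. Summing over dimensions $d=s+t+1,\dots,k$ gives $q^{(k-t)(n-k)-(s+1)(n-k-t-s-1)}$ up to $C$. The worse constant at $n=2k+s+1$ reflects that the geometric series in $d$ no longer decays and all $k-t+1$ terms contribute.

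\textbf{Main obstacle.} The hardest step is Step 2 in the vector-space setting. Unions of forbidden events are indexed by subspaces rather than elements, and the naive count multiplies by $[k]$ rather than $k$. This has to be balanced precisely against $r\approx q^{n-k}$ to get the sharp range $n\ge 2k+s+1$. We would first try to choose $r=q^{n-k-t-s-1}$-scale spreadness, tuned so that the exponent $(s+1)(n-k-t-s-1)$ appears exactly. We would then handle the boundary case $n=2k+s+1$ separately using the estimates of Lemma~\ref{lem:simple_gauss_bnds}.
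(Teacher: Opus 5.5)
You have a genuine gap. What Step 1 builds is a greedy spread decomposition of the $k$-dimensional family $\m F$ itself (spread approximation), not the paper's peeling--simplification. In the subspace setting, the two demands this places on $r$ cannot both be met.

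\emph{Step 2 needs $r$ large.} Your pieces $\m F(Z)$ consist of $(k-\dim Z)$-dimensional spaces, so the union bound runs over subspaces of a $k$-dimensional member and needs $r\gtrsim q^{k-t}$. This threshold is forced, not an artifact of the counting. The family of all $k$-subspaces of a fixed $(2k-t)$-dimensional space is $t$-intersecting and $q^{k-t}$-subspace-spread, since each factor $[k-j]/[2k-t-j]$ is below $q^{-(k-t)}$. So for $r\le q^{k-t}$ your decomposition returns the single kernel $\{0\}$, and the conclusion of Step 2 is false.

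\emph{The remainder needs $r$ small.} Once the maximal violating $Z$ has $\dim Z=d\ge s+t+1$, a greedy decomposition only tells you $|\m F_{\mathrm{cur}}|\le r^{d}\gauss{n-d}{k-d}$. That is roughly $r^{s+t+1}q^{(k-s-t-1)(n-k)}$, against the target $q^{(k-s-t-1)(n-k)+(s+1)(s+t+1)}$. This forces $r\lesssim q^{s+1}$, which is below $q^{k-t}$ whenever $k>s+t+1$. With your choice $r\approx q^{n-k}$, the leftover bound is essentially all of $\m V_n^k$.

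\emph{Step 3 does not repair this.} If the factor $r^{-1}$ is just the cost $\approx q^{-(n-k)}$ of fixing one more dimension, the branching argument gives about $\gauss{k}{t}[k]^{s+1}\gauss{n-t-s-1}{k-t-s-1}$. That is too large by a non-constant factor $q^{t(k-t)+(s+1)(k-s-t-2)}$. If the factor is meant to come from spreadness, the discarded part has no spread structure to use.

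The missing idea is the \emph{simplification} step. Process levels $i=k,k-1,\ldots,t$. Whenever a proper subspace $X$ of a current member can be added without destroying $t$-intersection, replace $\m C[X]$ by $X$. Once this stabilizes, all members have dimension at most $i$. Now suppose some subfamily $\m G$ had $\m G(X)$ $[i-t+1]$-spread. The member $F$ that blocked adding $X$, with $\dim(F\cap X)=s'<t$, would force every member of $\m G(X)$ to meet the $(i-s')$-dimensional space $(F+X)/X$ in dimension at least $t-s'$. A union bound over its $\gauss{i-s'}{t-s'}$ subspaces of dimension $t-s'$ rules this out.

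So the threshold is $[i-t+1]$, independent of $n$ and $k$, precisely because members were shrunk first. Take a $t$-subspace $T$ of one member of $\m W_i$ lying in a $\gauss{i}{t}^{-1}$-fraction of $\m W_i$. Lemma~\ref{lemspreadbound} applied to $\m W_i(T)$ gives $|\m W_i|\le\gauss{i}{t}[i-t+1]^{i-t}\approx q^{i(i-t)}$. The remainder is then at most $\sum_{i\ge s+t+1}|\m W_i|\gauss{n-i}{k-i}$, and $t$-intersection of $\m C_{s+t}$ holds by construction rather than via a spread lemma.

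Finally, $n\ge 2k+s+1$ is not used in any intersection lemma, contrary to your Step 2. It enters only in Lemma~\ref{lem:sum_bnd}, where $(k-s-t-1)(n-2k-s-1)\ge 0$ makes the $i=s+t+1$ term dominate the $i=k$ term. The tie at $n=2k+s+1$ is what produces the constant $2(\frac q{q-1})^{k-t+1}(1+\frac{q+1}{q^2-q-1})^2$.
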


Taking the family $\m K_{s+1}^{k,t}$, it is easy to see that only a negligible portion of it can be captured by any $t$-intersecting family $\m C_{s+t}$ of $(s+t)$-dimensional subspaces. Thus, most of the family $\m K_{s+1}^{k,t}$ must lie in the remainder, showing that the bound on the remainder in the theorem is asymptotically tight. A more thorough discussion of this example from the point of view of the peeling-simplification procedure can be found in Section~\ref{sec:peeling} just before the proof of Theorem~\ref{thmstruc}.

This document is structured as follows. In Section~\ref{sec:gauss}, we describe some basic properties and bounds for the Gaussian coefficients $\gauss{n}{k}$. Section~\ref{sec:peeling} contains the core of the paper: the definition of subspace-spreadness and the description of a peeling procedure for $t$-intersecting and $t$-cross-intersecting families in vector spaces. In Section~\ref{sec:appl}, we apply this method to describe the structure of large intersecting families in greater detail. We conclude with several directions for future work in Section~\ref{sec:con}.

\section{Gaussian coefficients}\label{sec:gauss}


For $q > 1$, define $[n]$ by $[n] = \frac{q^n-1}{q-1}$.
For $n \geq k \geq 0$, define the \textit{Gaussian coefficient} $\gauss{n}{k}$ by
\[
 \gauss{n}{k} = \prod_{i=1}^k \frac{[n-i+1]}{[i]}.
\]
For $k > n$ or $k < 0$, put $\gauss{n}{k} = 0$.
If $n,k$ are integers and $q$ is a prime power, then $|\m V_n^{k}| = \gauss{n}{k}$.
Thus, this agrees with the alternative definition given in the introduction.

Throughout the paper, we will need several estimates for Gaussian coefficients.
This section collects the ones we use.
A standard argument, see for example~\cite[Th. 3.3(1), p. 88]{Hirschfeld2}, gives the following result.
We include a proof for completeness.

\begin{Lemma}\label{lem:disj_subspaces}
  The number of $k$-subspaces of $V(n, q)$ that meet a fixed $m$-subspace in a fixed $\ell$-subspace
  equals
  \[
   q^{(k-\ell)(m-\ell)} \gauss{n-m}{k-\ell}.
  \]
\end{Lemma}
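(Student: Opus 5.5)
Fix the $m$-subspace $M$ and the $\ell$-subspace $L\subset M$. We want to count the $k$-subspaces $F$ with $F\cap M=L$. The plan is to pass to the quotient $V(n,q)/L$, which has dimension $n-\ell$. Taking $F\mapsto F/L$ gives a bijection between $k$-subspaces containing $L$ and $(k-\ell)$-subspaces of the quotient. Under this bijection, $F\cap M=L$ holds exactly when $(F/L)\cap(M/L)=0$. So the count reduces to the number of $(k-\ell)$-subspaces of an $(n-\ell)$-space that are disjoint from a fixed $(m-\ell)$-subspace. After renaming, it suffices to show that in $V(N,q)$ the number of $j$-subspaces meeting a fixed $d$-subspace $D$ trivially is $q^{jd}\gauss{N-d}{j}$.

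For this reduced count, I would fix a complement $W$ of $D$, so $\dim W=N-d$. Every $j$-subspace $U$ with $U\cap D=0$ maps isomorphically under the projection $\pi\colon V\to W$ along $D$ onto a $j$-subspace $\pi(U)\subset W$. Conversely, fix a $j$-subspace $P\subset W$. The subspaces $U$ with $U\cap D=0$ and $\pi(U)=P$ are exactly the graphs $\{p+\phi(p):p\in P\}$ of linear maps $\phi\colon P\to D$. Distinct maps give distinct graphs, so there are $q^{jd}$ such subspaces. Summing over the $\gauss{N-d}{j}$ choices of $P$ gives $q^{jd}\gauss{N-d}{j}$. Substituting $N=n-\ell$, $d=m-\ell$ and $j=k-\ell$ yields $q^{(k-\ell)(m-\ell)}\gauss{n-m}{k-\ell}$, as claimed.

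The only point needing care is the graph correspondence: checking that $U\cap D=0$ is equivalent to $\pi|_U$ being injective, and that $U$ is recovered from $\phi=(\mathrm{id}-\pi)\circ(\pi|_U)^{-1}$. Both checks are routine. Degenerate cases, such as $k-\ell>n-m$, are consistent with the convention that $\gauss{n}{k}=0$ when $k>n$.
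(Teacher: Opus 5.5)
Your proof is correct, but your core count differs from the paper's. Both arguments reduce to the case $\ell=0$ by passing to the quotient modulo the fixed $\ell$-subspace. The paper does this in one sentence; you spell out why $F\cap M=L$ is equivalent to $(F/L)\cap(M/L)=0$.

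For the disjoint case, the paper double counts ordered bases $(b_1,\ldots,b_n)$ whose first $m$ vectors lie in $M$. It compares their total number with the number whose next $k$ vectors additionally lie in a fixed $K$ disjoint from $M$. The count of such $K$ is then the ratio $\prod_{i=m}^{m+k-1}(q^n-q^i)/\prod_{i=0}^{k-1}(q^k-q^i)$, which must still be rewritten as $q^{km}\gauss{n-m}{k}$; the paper leaves that step implicit.

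You instead fix a complement $W$ and give an explicit bijection between subspaces disjoint from $D$ and pairs $(P,\phi)$, with $P$ a $j$-subspace of $W$ and $\phi\in\mathrm{Hom}(P,D)$. This produces both factors directly, with no product manipulation, and gives slightly more: each fiber over $P$ has exactly $q^{jd}$ elements. It is the same coordinatization behind the correspondence, mentioned in the paper's remarks on bilinear forms, between $k$-subspaces disjoint from a fixed $(n-k)$-subspace and $k\times(n-k)$ matrices. In exchange, the paper's route is purely enumerative and needs no choice of complement or linear-map bookkeeping.
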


\begin{proof}
    We first prove the assertion for $\ell = 0$.
    Thus, we need to count the $k$-subspaces disjoint from a fixed $m$-subspace $M$.
    We claim that there are precisely $q^{km} \gauss{n-m}{k}$ such subspaces.

    Count ordered bases $\cB = (b_1, \ldots, b_n)$ of $V(n,q)$.
    Clearly,
    \begin{align}
     | \{ \cB: b_i \in M \text{ for } 1 \leq i \leq m \}|
     = \prod_{i=0}^{m-1} (q^m - q^i) \prod_{i=m}^{n-1} (q^n - q^i). \label{eq:bases1}
    \end{align}
    Now fix a $k$-subspace $K$ disjoint from $M$.
    Similarly,
    \begin{align}
     & | \{ \cB: b_i \in M \text{ for } 1 \leq i \leq m, \; b_i \in K \text{ for } m+1 \leq i \leq m+k \}| \notag \\
     & = \prod_{i=0}^{m-1} (q^m-q^i) \prod_{i=0}^{k-1} (q^k-q^i) \prod_{i=m+k}^{n-1} (q^n-q^i). \label{eq:bases2}
    \end{align}
    The quotient of \eqref{eq:bases1} by \eqref{eq:bases2} is exactly the number of
    $k$-subspaces $K$ disjoint from $M$.
    The general case follows by passing to the quotient space modulo a fixed $\ell$-subspace of $M$.
\end{proof}

This implies $q$-analogs of the binomial identity.
\begin{Lemma}\label{lem:qbinid}
 Let $n \geq k \geq 0$. Then
 \[
  \gauss{n}{k} = \gauss{n-1}{k-1} + q^k \gauss{n-1}{k} = q^{n-k} \gauss{n-1}{k-1} + \gauss{n-1}{k}.
 \]
\end{Lemma}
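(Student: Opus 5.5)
We prove the $q$-Pascal identities by a double-counting argument resting on Lemma~\ref{lem:disj_subspaces}. For $n \geq k \geq 1$, fix a $1$-dimensional subspace $P$ of $V(n,q)$ and split $\m V^k_n$ into the $k$-subspaces containing $P$ and those not containing $P$.

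The $k$-subspaces containing $P$ correspond bijectively, via $F \mapsto F/P$, to the $(k-1)$-subspaces of $V(n,q)/P \cong V(n-1,q)$, so there are $\gauss{n-1}{k-1}$ of them. A $k$-subspace not containing $P$ meets $P$ in the zero subspace. Lemma~\ref{lem:disj_subspaces} with $m=1$ and $\ell=0$ counts these as $q^{k}\gauss{n-1}{k}$. Summing the two classes gives $\gauss{n}{k} = \gauss{n-1}{k-1} + q^k\gauss{n-1}{k}$.

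For the second identity, fix instead a hyperplane $H \in \m V^{n-1}_n$. The $k$-subspaces contained in $H$ number $\gauss{n-1}{k}$. Every other $k$-subspace $F$ satisfies $\dim(F\cap H)=k-1$, because $\dim(F+H)=n$. For a fixed $(k-1)$-subspace $L \subset H$, Lemma~\ref{lem:disj_subspaces} with $m=n-1$ and $\ell=k-1$ counts the $k$-subspaces meeting $H$ exactly in $L$ as
\[
q^{(k-\ell)(m-\ell)}\gauss{n-m}{k-\ell} = q^{n-k}\gauss{1}{1} = q^{n-k}.
\]
Summing over the $\gauss{n-1}{k-1}$ choices of $L$ gives $q^{n-k}\gauss{n-1}{k-1}$. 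Adding the subspaces inside $H$ yields $\gauss{n}{k} = q^{n-k}\gauss{n-1}{k-1} + \gauss{n-1}{k}$.

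The edge cases follow from the convention $\gauss{n}{k}=0$ outside $0\le k\le n$, together with $\gauss{n}{0}=1$. For $k=0$, both sides of each identity equal $1$. When $k=n$, the terms $\gauss{n-1}{n}$ vanish, and the identities reduce to $\gauss{n}{n} = \gauss{n-1}{n-1}$.

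The main obstacle is that this counting argument only works when $q$ is a prime power. If the identity is wanted for general real $q>1$, as in the definition of $\gauss{n}{k}$, we would instead verify it algebraically. Factor out $\gauss{n-1}{k-1}$, using
\[
\gauss{n}{k} = \frac{[n]}{[k]}\gauss{n-1}{k-1} \quad\text{and}\quad \gauss{n-1}{k} = \frac{[n-k]}{[k]}\gauss{n-1}{k-1}.
\]
The first identity then reduces to $[n] = [k] + q^k[n-k]$, and the second to $[n] = q^{n-k}[k] + [n-k]$. Both are immediate from $[m] = (q^m-1)/(q-1)$. Alternatively, both sides are polynomials in $q$ that agree at infinitely many prime powers, hence agree identically.
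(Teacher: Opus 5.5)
Your proof is correct and is essentially the paper's: the first identity is exactly its application of Lemma~\ref{lem:disj_subspaces} with $m=1$, $\ell=0$, and your hyperplane count with $m=n-1$, $\ell=k-1$ is the ``dual statement'' the paper invokes, written out explicitly. The algebraic check $[n]=[k]+q^k[n-k]=q^{n-k}[k]+[n-k]$ is a useful addition, since the counting only covers prime powers $q$ while $\gauss{n}{k}$ is defined for all real $q>1$. One pedantic caveat: with the stated conventions both identities fail at $n=k=0$, so your $k=0$ check, like the lemma itself, tacitly needs $n\ge 1$.
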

\begin{proof}
 Lemma~\ref{lem:disj_subspaces} with $m=1$ and $\ell=0$ shows the first equality.
 The second equality is the dual statement.
\end{proof}

We now record some simple estimates for Gaussian coefficients.

\begin{Lemma}\label{lem:simple_gauss_bnds}
  Let $a \geq b \geq 0$ and $q \geq 2$.
  Then
  \begin{enumerate}
   \item $q^{a-1} \leq [a] \leq (1+\frac{1}{q-1}) q^{a-1} \leq 2q^{a-1}$,
   \item $q^{b(a-b)} \leq \gauss{a}{b} \leq (1 + \frac{q+1}{q^2-q-1}) q^{b(a-b)} \leq (1+\frac{1}{q-1})^2 q^{b(a-b)}$,
   \item for $q=2$ and $b\ge 4, a\ge 2b-1$ we have $\gauss{a}{b}\ge 2.8 \, q^{b(a-b)}$,
   \item for $q=2$ and $b\ge 4, a\ge 4b-1$ we have $\gauss{a}{b}\ge 3.2 \, q^{b(a-b)}$.
  \end{enumerate}
\end{Lemma}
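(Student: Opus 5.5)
We will prove the four items separately, each by direct manipulation of the product formula for $[a]$ and $\gauss{a}{b}$.

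\textbf{Item (1).} Write $[a] = \frac{q^a-1}{q-1}$. For the lower bound, $[a]=\sum_{j=0}^{a-1}q^j\ge q^{a-1}$. For the upper bound, $[a] < \frac{q^a}{q-1} = \frac{q}{q-1}\,q^{a-1} = (1+\frac1{q-1})q^{a-1}$, and $\frac{q}{q-1}\le 2$ for $q\ge 2$.

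\textbf{Item (2), setup.} Write
\[
\gauss{a}{b}=\prod_{i=1}^b\frac{q^{a-i+1}-1}{q^i-1}.
\]
Each factor is at least $q^{a-b}$, since $(q^{a-i+1}-1)\ge q^{a-b}(q^i-1)$ is equivalent to $q^{a-b}\ge 1$. Multiplying the $b$ factors gives the lower bound $q^{b(a-b)}$.

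\textbf{Item (2), upper bound.} Bound each factor by $\frac{q^{a-i+1}}{q^i-1}=q^{a-2i+1}\cdot\frac{q^i}{q^i-1}$. The exponents sum to $b(a-b)$, so
\[
\gauss{a}{b}\le q^{b(a-b)}\prod_{i\ge1}(1-q^{-i})^{-1}.
\]
The constant $1+\frac{q+1}{q^2-q-1}$ is smaller than this Euler product, so the bound must be obtained more carefully. This is the main obstacle. The plan is:
\begin{itemize}
\item Treat the $i=1$ factor exactly: $\frac{q}{q-1}$.
\item For $i\ge 2$, use $\prod_{i\ge2}(1-q^{-i})^{-1}\le 1+\sum_{i\ge 2}\frac{q^{-i}}{1-q^{-1}}\cdots$, or induct on $b$ via Lemma~\ref{lem:qbinid}, proving $\gauss{a}{b}\le c\,q^{b(a-b)}$ from $\gauss{a}{b}=\gauss{a-1}{b-1}+q^b\gauss{a-1}{b}$.
\end{itemize}
The induction route looks cleaner. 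Normalize $g(a,b)=\gauss{a}{b}q^{-b(a-b)}$. The identity becomes
\[
g(a,b)=q^{-(a-b)}g(a-1,b-1)+g(a-1,b).
\]
Unrolling in $a$ gives
\[
g(a,b)=\sum_{j\ge0} q^{-j}g(b-1+j,b-1).
\]
The resulting sum has ratio $q^{-1}$ but requires a bound on $g$ at level $b-1$, which is circular.

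Instead, we aim directly at the identity-generated bound: one plausibly verifies
\[
\prod_{i\ge1}(1-q^{-i})^{-1}\le 1+\frac{1}{q-1}+\frac{1}{(q-1)(q^2-1)}\cdots
\]
via the partition generating function. Writing
\[
\prod_{i\ge1}(1-q^{-i})^{-1}=\sum_m p(m)q^{-m}
\]
and bounding $p(m)\le 2^{m-1}$ crudely gives $1+\sum_m 2^{m-1}q^{-m}$, which is fine for $q\ge3$. Neither route clearly gives the sharp constant $\frac{q+1}{q^2-q-1}$ for every $q$. For $q=2$, that constant equals $3$, so the target bound is $4\cdot 2^{b(a-b)}$, and the Euler product $\approx 3.46$ satisfies it. For general $q$, one checks that $1+\frac{q+1}{q^2-q-1}$ dominates $\sum_m p(m)q^{-m}$ using the sharper estimates $p(1)=1$, $p(2)=2$, $p(m)\le 2^{m-2}$ for $m\ge 2$. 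The last inequality in item (2) is then elementary algebra: $\frac{q+1}{q^2-q-1}\le \frac{2q-1}{(q-1)^2}$.

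\textbf{Items (3) and (4).} Use monotonicity: $g(a,b)$ is increasing in $a$ and in $b$, since each added factor $\frac{q^i}{q^i-1}$-type term exceeds $1$. It therefore suffices to check the smallest cases numerically: $g(7,4)\ge 2.8$ and $g(15,4)\ge 3.2$ for $q=2$. For example,
\[
\gauss{7}{4}=\gauss{7}{3}=\frac{127\cdot63\cdot31}{7\cdot3\cdot1}=11811,
\]
and $11811/2^{12}\approx 2.88\ge 2.8$. The analogous computation for $\gauss{15}{4}/2^{44}$ gives $\approx 3.2$. Monotonicity in both parameters, together with $a\ge 2b-1$ and $a\ge 4b-1$ respectively, follows from the product form, where each factor $\frac{1-q^{-(a-i+1)}}{1-q^{-i}}$ grows with $a$.
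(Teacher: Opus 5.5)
\textbf{The gap: the upper bound in item (2) is not proved.} Your reduction $\gauss{a}{b}\le q^{b(a-b)}\prod_{i=1}^{b}(1-q^{-i})^{-1}$ is correct. What follows it goes wrong in two places.

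First, the premise that $1+\frac{q+1}{q^2-q-1}$ is smaller than the Euler product is false. This constant equals $\frac{q^2}{q^2-q-1}=(1-q^{-1}-q^{-2})^{-1}$, and it is larger than the Euler product: $4$ versus $3.46$ at $q=2$, and $1.8$ versus $1.785$ at $q=3$. So the direct route was fine.

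Second, the argument you finally rely on is invalid. The estimate $p(m)\le 2^{m-2}$ fails at $m=2,3,4$, where $p(m)=2,3,5$. If you keep the true values there and use $2^{m-2}$ only for $m\ge5$, your majorant is $1+x+2x^2+3x^3+5x^4+8x^5+16x^6+\cdots$. This exceeds $\frac{1}{1-x-x^2}=1+x+2x^2+3x^3+5x^4+8x^5+13x^6+\cdots$ in every coefficient from $x^6$ on, so it fails for every $q$. Your earlier bound $p(m)\le 2^{m-1}$ fares no better: it gives $\frac{1-x}{1-2x}$, which is always larger than $\frac{1}{1-x-x^2}$. Base-$2$ bounds on $p(m)$ are too weak; you would need $p(m)\le F_{m+1}$.

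The missing step is $\prod_{i\ge1}(1-x^i)\ge 1-x-x^2$ for $0<x\le\frac12$. It follows from $\prod_{i\ge3}(1-x^i)\ge 1-\sum_{i\ge3}x^i$:
\[
\prod_{i\ge1}(1-x^i)\ \ge\ (1-x)(1-x^2)\Bigl(1-\frac{x^3}{1-x}\Bigr)=(1-x^2)(1-x-x^3)=1-x-x^2+x^5 .
\]
Taking $x=1/q$ gives $\prod_{i=1}^{b}(1-q^{-i})^{-1}<\frac{q^2}{q^2-q-1}$, as required. The pentagonal number theorem gives the same thing. Note that the paper does not prove part 2 at all; it cites it as a standard estimate (Neumann--Praeger).

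\textbf{Items (1), (3) and (4).} Item (1) and the lower bound in (2) match the paper. Items (3) and (4) follow the paper's method: product form, monotonicity, and a numerical check at $b=4$. The order differs: you shrink $b$ at fixed $a$, while the paper first lowers $a$ to the boundary $a=2b-1$ and then moves along it. Both orders work.

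However, your justification of monotonicity in $b$ is wrong as stated. Since $g(a,b)=g(a,a-b)$, $g$ is not increasing in $b$ in general. The correct statement is that $g(a,b)/g(a,b-1)=\frac{1-q^{-(a-b+1)}}{1-q^{-b}}\ge 1$ exactly when $b\le\frac{a+1}{2}$, and this is what the hypothesis $a\ge 2b-1$ guarantees.

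Finally, $g(15,4)\approx 3.249$. State that value rather than ``$\approx 3.2$'', since the claim being checked is a lower bound of $3.2$.
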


\begin{proof}
 Part 1 follows from
 \[
 q^{a-1} \leq \frac{q^a-1}{q-1} \leq \frac{q^a}{q-1}.
 \]
 Part 2 is a standard estimate, see~\cite[Lemma 3.5]{NP1995}.

 To prove part 3, note that
 \[
 \gauss{a}{b}= \prod_{i=0}^{b-1}\frac{[a-i]}{[b-i]}
 = \prod_{i=0}^{b-1}\frac{2^{a-i}-1}{2^{b-i}-1}
 = 2^{(a-b)b} \cdot  \frac{\prod_{i=a-b+1}^{a} \big(1-2^{-i}\big)}{\prod_{i=1}^{b} \big(1-2^{-i}\big)}.
 \]
 Provided $a\ge 2b-1$, the last expression is monotone increasing in $a$ for fixed $b$, and also monotone increasing in $b$ along the boundary case $a=2b-1$.
 Therefore, it is enough to check the bound numerically for $b=4$ and $a=7$.
 The proof of part 4 is analogous.
\end{proof}

The proof of Theorem~\ref{thmstruc} requires a bound on a certain sum.
We record the required estimate in the following lemma.

\begin{Lemma}\label{lem:sum_bnd}
    Fix positive integers $n$, $t$, $k$, $q\ge 2$ and a non-negative integer $s$ with $n \geq 2k+s+1$ and $s+t+1 \leq k$.
    Put
    \[
     S := \sum_{i=s+t+1}^k S_i, \text{ where }S_i = \gauss{n-i}{k-i} \gauss{i}{t} [i-t+1]^{i-t}.
    \]
    Then
    \[
    S\le C q^{k(n-k)-(s+t+1)(n-k-s-1)},
    \]
    where
    $C$ can be taken to be $2\big(\frac q{q-1}\big)^{k-t+1}(1 + \frac{q+1}{q^2-q-1})^2$ for $n = 2k+s+1$ and $\big(\frac q{q-1}\big)^{s+3}(1 + \frac{q+1}{q^2-q-1})^2$ for $n\ge 2k+s+2.$
\end{Lemma}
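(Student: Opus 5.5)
Each term $S_i$ is bounded by a pure power of $q$ times a constant, and then the terms are summed as a geometric series dominated by the first term $i=s+t+1$. By Lemma~\ref{lem:simple_gauss_bnds}(2), $\gauss{n-i}{k-i}\le c_q\, q^{(k-i)(n-k)}$ and $\gauss{i}{t}\le c_q\, q^{t(i-t)}$, where $c_q=1+\frac{q+1}{q^2-q-1}$; this accounts for the factor $c_q^2$ in $C$. By part (1), $[i-t+1]^{i-t}\le \big(\frac{q}{q-1}\big)^{i-t}q^{(i-t)^2}$. Writing $j=i-t$, the exponent of $q$ in the bound for $S_i$ is
\[
E(i)=(k-i)(n-k)+tj+j^2 = k(n-k) - i(n-k) + tj + j^2 .
\]
The target exponent is $k(n-k)-(s+t+1)(n-k-s-1)$, and a direct check shows that $E(s+t+1)$ equals it: with $i=s+t+1$ and $j=s+1$ we get $-i(n-k)+t(s+1)+(s+1)^2=-(s+t+1)(n-k)+(s+1)(s+t+1)$. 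So the first term has exactly the right order.

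Next we bound the ratios of consecutive terms. We have $E(i+1)-E(i) = -(n-k)+t+2j+1$. Since $j\le k-t$, this is at most $-(n-k)+2k-t+1$. Using $n\ge 2k+s+1$, this is at most $-(n-2k-s-1)-(s+t)+\dots$; more directly, for $j\le k-t-1$ (the last step) it is at most $-(n-k)+2k-t-1\le -(s+t+2)+ \dots$. I would redo this carefully. The point is that $-(n-k)+t+2j+1 \le -1$ exactly when $2j\le n-k-t-2$, which always holds when $n\ge 2k+s+2$ (since $2j\le 2(k-t)-2$ for nonlast transitions). For $n=2k+s+1$ the decrement can be $0$ at the top end. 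Combining this with the multiplicative factor $\frac{q}{q-1}$ coming from $[i-t+1]^{i-t}$ gives the following two cases.

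\emph{Case $n\ge 2k+s+2$.} Each exponent step is at most $-1$, and the exponent drops faster further down (it is convex in $j$, so the worst steps are at the end). I would bound $S\le S'_{s+t+1}\sum_{m\ge0}\big(\frac{q}{q-1}\big)^{m} q^{-m(m+1)/2\cdots}$, or more crudely bound the steps near the start by $-(n-k-t-2j-1)\le -1$. The factor $\big(\frac{q}{q-1}\big)^{s+1}$ comes from the first term, and the geometric sum over the $q^{-\text{step}}$ contributes at most another $\big(\frac{q}{q-1}\big)^2$, giving the exponent $s+3$. This requires care: the factor $(q/(q-1))^{j}$ grows with $j$, so it must be absorbed by decrements that eventually exceed $1$. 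That works because the decrements are $\ge 2$ except possibly in a few spots.

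\emph{Case $n=2k+s+1$.} Here the exponent function is symmetric-ish: it is $\le E(s+t+1)$ throughout and equals it at both ends. I would bound each term crudely by $\big(\frac q{q-1}\big)^{k-t}c_q^2q^{E(s+t+1)}$ and sum the $q^{E(i)-E(s+t+1)}$. This is a sum of two geometric tails from both ends, each bounded by $\frac{q}{q-1}$ (or by $1+$small), which gives the factor $2\big(\frac q{q-1}\big)^{k-t+1}$.

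The main obstacle is this bookkeeping of the ratio $(q/(q-1))^j q^{E(i)-E(i-1)}$ so as to get exactly the stated constants, especially when the decrement is only $1$ (possible when $q=2$, where $\frac{q}{q-1}\cdot q^{-1}=1$). In that case I would pair the convexity of $E$ with a geometric sum of ratio $q^{-1}$ multiplied by the accumulated $(q/(q-1))$ powers.
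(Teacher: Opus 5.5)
There is a genuine gap in your case $n\ge 2k+s+2$, because your picture of the shape of the terms is wrong. You assert that every step $E(i+1)-E(i)=2j+t+1-(n-k)$ is at most $-1$ there. For the non-final transitions ($j\le k-t-1$) this requires $2(k-t-1)\le n-k-t-2$, i.e.\ $n\ge 3k-t$, and that does not follow from $n\ge 2k+s+2$.

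In fact $E(i)=k(n-k)-i(n-k+t-i)$ is a convex quadratic in $i$ with vertex at $i=(n-k+t)/2$. Whenever this vertex lies below $k$, the terms \emph{increase} again as $i\to k$, and the last step $E(k)-E(k-1)=3k-t-1-n$ can be large. For example, with $s=0$, $t=1$, $k=10$, $n=22$, the exponents for $i=2,\dots,10$ are $98,90,84,80,78,78,80,84,90$. Your remark that convexity makes the exponent drop faster further down is backwards: convexity means the steps grow with $i$. The increments are also not confined to ``a few spots''. So no single geometric series anchored at $i=s+t+1$ controls $S$, and the bookkeeping you planned (absorbing the growing factor $(q/(q-1))^{j}$ into ever larger decrements) cannot be carried out. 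Similarly, for $n=2k+s+1$ the top step is an increment of $k-s-t-2$, not a decrement of $0$. Your two-tail treatment of that case is nevertheless sound and does give $2(\frac q{q-1})^{k-t+1}c_q^2$.

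What is missing is the two-tail argument you already used for $n=2k+s+1$, applied in general, followed by a comparison of the two endpoints. This is what the paper does. Write $a_i=(\frac q{q-1})^{i-t}c_q^2q^{E(i)}$.
\begin{itemize}
\item For $i<(n-k+t)/2$ one has $E(i)-E(i-1)\le -2$, hence $a_i/a_{i-1}\le \frac{q}{q-1}q^{-2}\le q^{-1}$.
\item For $i\ge (n-k+t)/2$, reading downward from $i=k$, one has $a_i/a_{i+1}\le \frac{q-1}{q}q^{-1}\le q^{-1}$. Here the factor $q/(q-1)$ works in your favour, so your worry about a ratio of $1$ at $q=2$ disappears.
\end{itemize}
Together these give $S\le \frac{q}{q-1}(a_{s+t+1}+a_k)$.

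Now use the identity $k(n-2k+t)-(s+t+1)(n-k-s-1)=(k-s-t-1)(n-2k-s-1)$. For $n\ge 2k+s+2$ it shows that the exponent of $a_k$ is smaller by at least $k-s-t-1$. This offsets the extra factor $(\frac q{q-1})^{k-s-t-1}$ in $a_k$, giving $a_k\le a_{s+t+1}/(q-1)$. Hence $S\le(\frac q{q-1})^2a_{s+t+1}$, which is the constant $(\frac q{q-1})^{s+3}c_q^2$.

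So the first term does dominate, but not because the sequence decreases. The sequence $a_i$ is log-convex and dips in the middle, and the endpoint comparison is the step your proposal lacks.
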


\begin{proof}
Using Lemma~\ref{lem:simple_gauss_bnds}, we obtain
\begin{align*}
S_i &\le \Big(1+\frac 1{q-1}\Big)^{i-t}\Big(1 + \frac{q+1}{q^2-q-1}\Big)^2 q^{(k-i)(n-k)+(i-t)t+(i-t)^2}\\
&=\Big(\frac q{q-1}\Big)^{i-t}\Big(1 + \frac{q+1}{q^2-q-1}\Big)^2 q^{k(n-k)-i(n-k+t-i)}=:a_i.
\end{align*}
If $k=s+t+1$, then the claim is immediate, so we may assume that $k\ge s+t+2$.

Note that the quadratic exponent of $q$ in the expression for $a_i$ is minimized at $i = (n-k+t)/2$.
We estimate the sum $\sum_{i=s+t+1}^k a_i$ by splitting it into two geometric tails.

For $i< \frac{n-k+t}{2}$, we have
\[
\frac{a_i}{a_{i-1}}\le \frac q{q-1}\cdot \frac 1{q^2} < \frac 1q,
\]
and therefore
\[
\sum_{i=s+t+1}^{(n-k+t-1)/2} a_i\le  \frac 1{1-\frac 1q}a_{s+t+1} = \frac{q}{q-1}a_{s+t+1}.
\]
In particular, if $\frac {n-k+t}{2}>k$, which is equivalent to $n>3k-t$, then
\[
S\le \frac{q}{q-1}a_{s+t+1}.
\]
Thus, in what follows we may assume that $n\le 3k-t$.

For $i\ge \frac{n-k+t}{2}$, we similarly have
\[
\frac{a_i}{a_{i+1}}\le \frac 1q,
\]
and hence
\[
\sum_{i=(n-k+t)/2}^k a_i\le \frac 1{1-\frac 1q}a_k = \frac{q}{q-1}a_k.
\]

Combining the two estimates, we obtain
\begin{align*}
S &= \sum_{i=s+t+1}^kS_i\le \frac q{q-1}(a_{s+t+1}+a_k) \\
&= \Big(1 + \frac{q+1}{q^2-q-1}\Big)^2 \cdot
\Big(\Big(\frac q{q-1}\Big)^{s+2}q^{k(n-k)-(s+t+1)(n-k-s-1)} \\
&\hspace{6em}+ \Big(\frac q{q-1}\Big)^{k-t+1}q^{k(n-k)-k(n-2k+t)}\Big).
\end{align*}

A straightforward calculation shows that
\begin{equation}\label{eqcalc1}
k(n-2k+t)-(s+t+1)(n-k-s-1) = (k-(s+t+1))(n-2k-s-1).
\end{equation}
Recall that $k\ge s+t+2$ and $n\ge 2k+s+1$.

If $n = 2k+s+1$, then the right-hand side of \eqref{eqcalc1} is $0$, and therefore
\[
S\le 2\Big(1 + \frac{q+1}{q^2-q-1}\Big)^2\Big(\frac q{q-1}\Big)^{k-t+1}q^{k(n-k)-(s+t+1)(n-k-s-1)}.
\]

Now assume that $n\ge 2k+s+2$.
Then the right-hand side of \eqref{eqcalc1} is at least $k-(s+t+1)$, and hence
\[
a_k\le a_{s+t+1}\cdot\Big(\frac q{q-1}\Big)^{k-(s+t+1)}\cdot q^{-k+s+t+1}<\frac 1{q-1} a_{s+t+1}.
\]
Therefore,
\begin{align*}
S &\le \frac q{q-1}(a_{s+t+1}+a_k) \le \Big(\frac q{q-1}\Big)^2 a_{s+t+1}\\
&=\Big(1 + \frac{q+1}{q^2-q-1}\Big)^2\Big(\frac q{q-1}\Big)^{s+3}q^{k(n-k)-(s+t+1)(n-k-s-1)}.
\end{align*}
This completes the proof.
\end{proof}

\section{Subspace-spreadness and peeling} \label{sec:peeling}

We begin by recalling the following definitions. Given a family of subspaces $\m C\subset \m V_n^k$, we define
$$\m C(X):=\{S/X: S\in \m C, X\subset S\}.$$
We also use the notation
$$\m C[X]:=\{S: S\in \m C, X\subset S\},$$
and, more generally, for a family $\m X$,
$$\m C[\m X]:=\bigcup_{X\in \m X}\m C[X].$$

Given a family of subspaces $\m C\subset \m V_n^k$, we say that $\m C$ is {\it $r$-subspace-spread} if for any $i\in \{1,\ldots, k\}$ and any subspace $X$ of dimension $i$ we have
$$|\m C(X)|< r^{-i}|\m C|.$$ 

\begin{Lemma}\label{lemspreadbound}
    If $\m C\subset \m V^{\le k}_n$ is a family of subspaces of size $>r^k$, then there exists a subspace $X$ such that $|\m C(X)|>1$ and $\m C(X)$ is $r$-subspace-spread.
\end{Lemma}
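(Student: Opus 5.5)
The plan is the standard maximality (``peeling'') argument used for ordinary spreadness. Among all subspaces $X$ (including the zero subspace $X=\{0\}$) with $|\m C(X)|\ge 1$, choose one maximizing the quantity $r^{\dim X}|\m C(X)|$. Taking $X=\{0\}$ gives $\m C(\{0\})=\m C$, so the maximum is at least $|\m C|>r^k$.

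First check that the maximizer satisfies $|\m C(X)|>1$. If $|\m C(X)|\le 1$, then $r^{\dim X}|\m C(X)|\le r^{\dim X}$. Every member of $\m C(X)$ comes from some $S\in\m C$ with $X\subset S$ and $\dim S\le k$, so $\dim X\le k$. The maximum would then be at most $r^k<|\m C|$, contradicting the choice of $X$. Hence $|\m C(X)|\ge 2$. We may also assume $r\ge 1$: for $r<1$ the hypothesis still forces $|\m C|\ge 1$, the computation is similar, and the statement is only used for $r\ge 1$ anyway.

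Next, show that $\m C(X)$ is $r$-subspace-spread. Fix $i\ge 1$ and an $i$-dimensional subspace $Y/X$ of the quotient $V/X$, so $Y\supset X$ and $\dim Y=\dim X+i$. By the correspondence theorem, $(\m C(X))(Y/X)$ is in bijection with $\m C(Y)$: an element is $(S/X)/(Y/X)\cong S/Y$ for $S\in\m C$ with $Y\subset S$. If $\m C(Y)$ is empty, the spreadness inequality $0<r^{-i}|\m C(X)|$ holds trivially. Otherwise $Y$ competes in the maximization. If equality of the maximum were attained at $Y$, we would only get a non-strict inequality, so we break ties by choosing, among all maximizers, one of largest dimension. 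Then $Y$ satisfies $r^{\dim X+i}|\m C(Y)|<r^{\dim X}|\m C(X)|$, i.e.
\[
 |(\m C(X))(Y/X)|=|\m C(Y)|<r^{-i}|\m C(X)|,
\]
which is exactly the required strict inequality. Since the index $i$ in the definition ranges only up to the ambient dimension bound, all relevant $Y$ are covered.

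The main subtlety is this strictness and tie-breaking, together with the bookkeeping that identifies iterated links $(\m C(X))(Y/X)$ with $\m C(Y)$ in the quotient space. The only other point to verify is that $\m C(X)$ is a family of subspaces of dimension at most $k-\dim X$ in $V/X$. Then spreadness is checked for $i$ up to that bound, and larger $i$ give empty links, so the inequality there is automatic.
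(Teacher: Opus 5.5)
Your proof is correct and is essentially the paper's argument. The paper takes an inclusion-maximal $X$ with $|\m C(X)|\ge r^{-\dim X}|\m C|$, which gives the strict spreadness inequality directly; your global maximization of $r^{\dim X}|\m C(X)|$ gets strictness from the largest-dimension tie-break you supply.

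One correction: the assumption $r\ge 1$ (also tacit in the paper, via $r^{\dim X}\le r^k$) is genuinely necessary, not just convenient. For $r<1$, a family consisting of a single $k$-subspace satisfies $|\m C|>r^k$ but has no $X$ with $|\m C(X)|>1$, so drop the remark that this case is ``similar''.
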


\begin{proof}
Take an inclusion-maximal subspace $X$ such that $|\m C(X)|\ge r^{-\dim X}|\m C|$. Note that $|\m C(X)|>1$, since otherwise $|\m C|\le r^{\dim X}\le r^k$.

We claim that $\m C(X)$ is $r$-subspace-spread. Suppose that there exists a larger-dimensional subspace $Y$ with $X\subset Y$ such that
$$|\m C(Y)|\ge r^{-\dim Y/X}|\m C(X)|.$$
Then
$$|\m C(Y)|\ge r^{-\dim Y/X}|\m C(X)|\ge r^{-\dim Y}|\m C|,$$
contradicting the maximality of $X$. Here we used that $X\subset Y$, and thus $\dim X+\dim Y/X = \dim Y$. This proves the lemma.
\end{proof}

Now let $\m F\subset \m V_n^k$ be a $t$-intersecting family of subspaces. We describe a peeling-simplification procedure for $\m F$.

Starting from the family $\m C_k:=\m F$, for each $i= k,k-1,\ldots, t$ do the following:
\begin{itemize}
    \item Put $\m C:=\m C_i$.
    \item If there is a subspace $X$ that is a strict subspace of some subspace in $\m C$, and such that $\m C \cup \{X\}$ is $t$-intersecting, then put $\m C:= \m C\setminus \m C[X]\cup \{X\}$. Repeat this step until no such $X$ remains.
    \item Output $\m C_{i-1}:=\m C\cap \m V_n^{\le i-1}$ and $\m W_{i}:=\m C\cap \m V_n^{i}$.
\end{itemize}

\begin{Lemma}\label{lemsimpeel}
    The following holds for each $i=k,k-1,\ldots, t$.
    \begin{enumerate}
        \item We have $\m F= \m F[\m C_{i}]\cup \bigcup_{j=i+1}^k \m F[\m W_j].$
        \item The family $\m C_{i-1}$ is a $t$-intersecting family of subspaces in $\m V_n^{\le i-1}$. The family $\m W_i$ is a $t$-intersecting family of subspaces in $\m V_n^{i}$. Moreover, the family $\m C_{i-1}\cup \m W_i$ does not contain a subfamily $\m G$ and a proper subspace $X$ such that $\m G[X]$ is $[i-t+1]$-subspace-spread.
        \item $|\m W_i|\le \gauss{i}{t}[i-t+1]^{i-t} \leq (1+\frac{1}{q-1})^{i-t+2} q^{i(i-t)} \leq 2^{i-t+2} q^{i(i-t)}.$
    \end{enumerate}
\end{Lemma}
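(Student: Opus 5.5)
The plan is to prove the three parts in order. Part~1 is proved by tracking a covering invariant, part~2 by a spreadness/union-bound argument, and part~3 is deduced from part~2 using Lemma~\ref{lemspreadbound}.

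\emph{Termination.} First I would check that the inner loop terminates. One replacement step removes $\m C[X]$ and adds $X$. The family $\m C[X]$ contains at least one subspace of dimension $>\dim X$, because $X$ is a strict subspace of some member. Hence the potential $\sum_{A\in\m C}2^{\dim A}$ strictly decreases at each step, so the loop stops.

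\emph{Part 1.} I would prove by induction that at every moment every $F\in\m F$ contains a member of the current $\m C$ or of some already output $\m W_j$ with $j>i$. Initially this holds because $\m C_k=\m F$. In a replacement step, any removed member contains $X$, so a set $F$ that contained it now contains $X$, which is in $\m C$. At the output stage the members of $\m C$ of dimension $i$ go to $\m W_i$ and the rest go to $\m C_{i-1}$. All members have dimension $\le i$: this holds for $\m C_i$ by construction, and any added $X$ is strictly smaller than an existing member. This gives $\m F=\m F[\m C_{i-1}]\cup\bigcup_{j\ge i}\m F[\m W_j]$, which is part~1 at the next index.

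\emph{Part 2.} The $t$-intersecting property is preserved because we only add $X$ when $\m C\cup\{X\}$ is $t$-intersecting. For the spreadness claim, let $\m C$ denote the final family at stage $i$, so $\m C=\m C_{i-1}\cup\m W_i$. Suppose $\m G\subset\m C$ and $X$ is a proper subspace of a member of $\m G[X]$ such that $\m G(X)$ is $r$-subspace-spread, where $r=[i-t+1]$. I would show that $\m C\cup\{X\}$ is $t$-intersecting, which contradicts termination.

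Take $A\in\m C$ with $j:=\dim(A\cap X)<t$. Every $B\in\m G[X]$ satisfies $\dim(B\cap A)\ge t$. Therefore $B/X$ meets $(A+X)/X$ in a subspace of dimension at least $t-j$, and so $B/X$ contains some $(t-j)$-subspace $Y$ of $(A+X)/X$. The space $(A+X)/X$ has dimension $\dim A-j\le i-j$. By spreadness, each such $Y$ is contained in fewer than $r^{-(t-j)}|\m G(X)|$ members of $\m G(X)$. A union bound then gives
\[
|\m G(X)|<\gauss{i-j}{t-j}\,r^{-(t-j)}\,|\m G(X)|.
\]
To reach a contradiction it suffices to show $\gauss{i-j}{t-j}\le [i-t+1]^{t-j}$. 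Write the coefficient as $\prod_{a=1}^{t-j}[i-t+a]/[a]$. Each factor is at most $[i-t+1]$, because $[x+y-1]\le[x][y]$ (the set of monomials $q^0,\dots,q^{x+y-2}$ is contained in the expanded product). This step is the main point where care is needed. In particular, "proper" must be read as "strict subspace of some member of $\m C$", so that adding $X$ is indeed an allowed move.

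\emph{Part 3.} Fix $A\in\m W_i$. Since $\m W_i$ is $t$-intersecting, $\m W_i=\bigcup_T \m W_i[T]$, where $T$ ranges over the $\gauss{i}{t}$ subspaces of $A$ of dimension $t$. Suppose $|\m W_i(T)|>r^{i-t}$ for some $T$. Apply Lemma~\ref{lemspreadbound} to $\m W_i(T)$, a family of $(i-t)$-spaces in $V/T$. This yields $Y\supseteq T$ with $|\m W_i(Y)|>1$ and $\m W_i(Y)$ being $r$-spread. Since $\m W_i[Y]$ has at least two $i$-spaces, $Y$ is proper, and this contradicts part~2 with $\m G=\m W_i$. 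Hence
\[
|\m W_i|\le\gauss{i}{t}[i-t+1]^{i-t}.
\]
The numerical bounds then follow from Lemma~\ref{lem:simple_gauss_bnds}: $\gauss{i}{t}\le(\frac q{q-1})^2q^{t(i-t)}$ and $[i-t+1]^{i-t}\le(\frac q{q-1})^{i-t}q^{(i-t)^2}$.
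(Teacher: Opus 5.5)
Your proposal is correct and follows the paper's proof essentially step for step: a covering invariant for part~1; for part~2, a union bound over the $(t-j)$-subspaces of $(A+X)/X$ played against spreadness, using the same factor-wise inequality $[i-t+a]\le[i-t+1][a]$, which you justify more cleanly by comparing monomials; and for part~3, Lemma~\ref{lemspreadbound} applied to $\m W_i(T)$. Your explicit termination check, your use of $(A+X)/X$ where the paper writes $F/X$, and your reading of ``proper'' as ``strict subspace of some member'' are small clarifications of points the paper leaves implicit.
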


\begin{proof}
Part 1 follows easily by reverse induction on $i$, using that $\m C_i =\m C_i[\m C_{i-1}]\cup \m W_i$.

The first two assertions in part 2 follow directly from the definition of $\m C_{i-1}$ and $\m W_i$. We prove the last assertion indirectly. Suppose that there exist a proper subspace $X$ and a subfamily $\m G\subset \m C_{i-1}\cup \m W_i$ such that $\m G[X]$ is $[i-t+1]$-subspace-spread. By the definition of the peeling procedure, there exists $F\in \m C_{i-1}\cup \m W_i$ such that $\dim(X\cap F) = s<t$. Otherwise, we would have added $X$ to $\m C$ and removed $\m G[X]$.

Since $\m C_{i-1}\cup \m W_i$ is $t$-intersecting, it follows that $F/X$ intersects $S/X$ in a subspace of dimension at least $t-s$ for every $S/X\in \m G(X)$. The dimension of $F/X$ is $i-s$, and thus there are $\gauss{i-s}{t-s}$ subspaces of dimension $t-s$ in $F/X$. On the other hand, by the $[i-t+1]$-subspace-spreadness of $\m G(X)$, any subspace of dimension $t-s$ in $V/X$ is contained in strictly fewer than $[i-t+1]^{-(t-s)}|\m G(X)|$ subspaces from $\m G(X)$. Therefore, the number of subspaces in $\m G(X)$ containing any $(t-s)$-subspace of $F/X$ is strictly smaller than
$$\gauss{i-s}{t-s}[i-t+1]^{-(t-s)}|\m G(X)|=\prod_{j=0}^{t-s-1}\frac{[i-s-j]}{[t-s-j][i-t+1]}|\m G(X)|\le |\m G(X)|,$$
where the last inequality follows, for each $j=0,\ldots, t-s-1$, from
\begin{align*}
\frac{[i-s-j]}{[t-s-j][i-t+1]}
&= \frac{(q^{i-s-j}-1)(q-1)}{(q^{t-s-j}-1)(q^{i-t+1}-1)}\\
&=\frac{q^{i-s-j+1}+1-q-q^{i-s-j}}{q^{i-s-j+1}+1-q^{t-s-j}-q^{i-t+1}}\le 1.
\end{align*}
This, in turn, follows from the fact that $t-s-j,i-t+1\ge 1$ and hence
$$q+q^{i-s-j}\ge q^{t-s-j}+q^{i-t+1}.$$

We conclude that not all subspaces from $\m G(X)$ intersect $F/X$ in a subspace of dimension at least $t-s$, contradicting the $t$-intersection property of $\m C_{i-1}\cup \m W_i$. Thus, no such $\m G$ and $X$ exist.

Finally, we prove part 3. Take any subspace $F\in \m W_i$. Note that it has dimension $i$ and $t$-intersects every other subspace in $\m W_i$. Consider a $t$-subspace $X\subset F$ that is contained in at least a $\gauss{i}{t}^{-1}$-fraction of all subspaces in $\m W_i$. The family $\m W_i(X)$ is a family of $(i-t)$-dimensional subspaces and, by part 2, has no $[i-t+1]$-subspace-spread subfamily of the form $\m W_i(X + Y)$. Therefore, by Lemma~\ref{lemspreadbound}, $|\m W_i(X)|\le [i-t+1]^{i-t}$. Combining these bounds, we obtain
$$|\m W_i|\le \gauss{i}{t}|\m W_i(X)|\le \gauss{i}{t}[i-t+1]^{i-t}.$$

The numerical bounds on $|\m W_i|$ now follow from Lemma~\ref{lem:simple_gauss_bnds}.
\end{proof}


Now we are ready to prove Theorem~\ref{thmstruc}.
%
%

\begin{proof}[Proof of Theorem~\ref{thmstruc}]
We claim that the family $\m C_{s+t}$ obtained from the simplifica\-tion-peeling procedure satisfies the conclusion of the theorem.

By Lemma~\ref{lemsimpeel}, part~2, the family $\m C_{s+t}$ is $t$-intersecting and consists of subspaces of dimension at most $s+t$, as required.

It remains to bound the size of the remainder $\m F\setminus \m F[\m C_{s+t}]$. By Lemma~\ref{lemsimpeel}, part~1, we have
\[
\m F = \m F[\m C_{s+t}] \cup \bigcup_{i=s+t+1}^k \m F[\m W_i],
\]
and therefore
\[
|\m F\setminus \m F[\m C_{s+t}]|
\le \sum_{i=s+t+1}^k |\m F[\m W_i]|.
\]

For each $i\ge s+t+1$, we clearly have $\m F[\m W_i]\subset \m V_n^k[\m W_i]$. Moreover, for a fixed subspace $X\in \m W_i$, the number of $k$-dimensional subspaces containing $X$ is equal to $\gauss{n-i}{k-i}$. Hence,
\[
|\m V_n^k[\m W_i]| \le |\m W_i| \cdot \gauss{n-i}{k-i}.
\]

Using Lemma~\ref{lemsimpeel}, part~3, we obtain
\[
|\m F[\m W_i]| \le |\m V_n^k[\m W_i]|
\le \gauss{n-i}{k-i}\,\gauss{i}{t}[i-t+1]^{i-t}.
\]

Summing over all $i\ge s+t+1$, we get
\begin{align*}
\sum_{i=s+t+1}^k |\m F[\m W_i]|
&\le \sum_{i=s+t+1}^k \gauss{n-i}{k-i}\,\gauss{i}{t}[i-t+1]^{i-t}.
\end{align*}

The required bound now follows from Lemma~\ref{lem:sum_bnd}.
\end{proof}
 Finally, let us analyze the example of $\m K_i^{k,t}$. Recall from the introduction that the family $\m K_i^{k,t} = \m V^k_n[\m V(t, i)]$ is $t$-intersecting and has size of order
$q^{(k-t)(n-k)-i(n-k-t-i)}$.
This shows that the bound in Theorem~\ref{thmstruc} is asymptotically tight once we take $s=i-1$.

To explain this more precisely, note first that the family $\m V(t,i)$ itself cannot be simplified: if $X$ is a strict subspace of some member of $\m V(t,i)$, then adding $X$ destroys the $t$-intersecting property. Indeed, $\m V(t,i)$ consists of all $(t+i)$-subspaces of a fixed $(t+2i)$-subspace, and two such subspaces always intersect in dimension at least $t$; however, a strict subspace of one of them need not have this property with the rest of the family.

Now consider the peeling-simplification procedure applied to $\m K_i^{k,t}$. Since every member of $\m K_i^{k,t}$ contains a member of $\m V(t,i)$, one may think of $\m V(t,i)$ as the core configuration underlying $\m K_i^{k,t}$. Once the procedure reaches this core family, no further simplification is possible. Therefore, in the notation of Theorem~\ref{thmstruc}, when $s=i-1$ the approximating family $\m C_{s+t}=\m C_{t+i-1}$ must be empty. Equivalently,
\[
\m V(t,i)\setminus \m V(t,i)[\m C_{t+i-1}] = \m V(t,i).
\]

Thus, for this example, the remainder is as large as the whole core family $\m V(t,i)$. Since $\m K_i^{k,t}$ is obtained by extending the members of $\m V(t,i)$ to $k$-subspaces, this shows that the bound in the remainder term in Theorem~\ref{thmstruc} is best possible, up to the constant factor. 

\subsection{A variant for cross-\texorpdfstring{$t$}{t}-intersecting families of subspaces}

We say that two families of subspaces $\m A, \m B$ are \emph{cross $t$-intersecting} if for any $F\in \m A$ and $G\in \m B$ we have $\dim(F\cap G)\ge t$.

Assume that $\m A\subset \m V_n^a$ and $\m B\subset \m V_n^b$. In analogy with the one-family case, one can obtain low-dimensional approximations for cross $t$-intersecting families by running a peeling-simplification procedure. However, a key subtlety arises. In the one-family setting, the relevant value of spreadness (and consequently the bounds on the remainder) depends only on the uniformity of the family itself. In the present setting, the situation is asymmetric: when simplifying $\m A$, the relevant spreadness threshold is governed by the uniformity of $\m B$, and vice versa.

This makes it less straightforward to formulate a fully symmetric general statement, but at the same time provides additional flexibility. For instance, one may alternate the peeling procedure between the two families, or first reduce the uniformity of one family to a desired level before proceeding with the other.

Below, we describe the procedure in the case when only $\m B$ is simplified. The same argument can then be applied with the roles of $\m A$ and $\m B$ interchanged.

\medskip

\noindent
\textbf{Peeling for cross $t$-intersecting families.}
Let $\m A\subset \m V_n^a$ and $\m B\subset \m V_n^b$ be cross $t$-intersecting families. We describe a peeling-simplification procedure applied to $\m B$.

Starting from $\m C_b := \m B$, for each $i=b,b-1,\ldots,t$ do the following:
\begin{itemize}
    \item Set $\m C := \m C_i$.
    \item If there exists a subspace $X$ that is a strict subspace of some subspace in $\m C$, and such that $\m C \cup \{X\}$ remains cross $t$-intersecting with $\m A$, then replace
    \[
    \m C:= \m C\setminus \m C[X]\cup \{X\}.
    \]
    Repeat this step until no such $X$ exists.
    \item Define $\m C_{i-1} := \m C\cap \m V_n^{\le i-1}$ and $\m W_i := \m C\cap \m V_n^i$.
\end{itemize}

We state the following without proof, as it follows by a straightforward adaptation of the proof of Lemma~\ref{lemsimpeel}.

\begin{Lemma}\label{lemsimpeel2}
For each $i=b,b-1,\ldots,t$, the following hold:
\begin{enumerate}
    \item We have
    \[
    \m B = \m B[\m C_i]\cup \bigcup_{j=i+1}^b \m B[\m W_j].
    \]
    \item The families $\m C_{i-1}\cup \m W_i$ and $\m A$ are cross $t$-intersecting. Moreover, $\m C_{i-1}\subset \m V_n^{\le i-1}$ and $\m W_i\subset \m V_n^i$. The family $\m C_{i-1}\cup \m W_i$ does not contain a subfamily $\m G$ and a proper subspace $X$ such that $\m G[X]$ is $[a-t+1]$-subspace-spread.\footnote{Note that the spreadness parameter is determined by the uniformity of $\m A$.}
    \item We have
    \[
    |\m W_i|\le \gauss{a}{t}[a-t+1]^{i-t} \le (1+\tfrac{1}{q-1})^{i-t+2} q^{i(a-t)} \le 2^{i-t+2} q^{i(a-t)}.
    \]
\end{enumerate}
\end{Lemma}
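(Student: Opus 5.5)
Part 1 is proved by reverse induction on $i$, exactly as in Lemma~\ref{lemsimpeel}. For $i=b$ it is trivial, since $\m C_b=\m B$. Each replacement step $\m C\mapsto \m C\setminus\m C[X]\cup\{X\}$ preserves the property that every member of $\m B$ contains some member of $\m C$: every removed element contains $X$, so every member of $\m B$ above a removed element lies above $X$. Hence after the simplification at stage $i$ we have $\m B[\m C_i]\subset \m B[\m C_{i-1}]\cup\m B[\m W_i]$, and induction gives the decomposition.

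For part 2, cross $t$-intersection with $\m A$ and the dimension constraints hold by construction, since every added $X$ keeps the family cross $t$-intersecting with $\m A$. For the spreadness claim, argue by contradiction, mirroring Lemma~\ref{lemsimpeel}. Suppose $\m G\subset \m C_{i-1}\cup\m W_i$ and a proper subspace $X$ (a strict subspace of members of $\m G[X]$) are such that $\m G[X]$ is $[a-t+1]$-subspace-spread. Because the procedure stopped, $X$ could not be added. So some $A\in\m A$ has $\dim(A\cap X)=s<t$.
\begin{itemize}
\item Every $S\in\m G[X]$ satisfies $\dim(S\cap A)\ge t$, so $S/X$ meets $(A+X)/X$ in dimension at least $t-s$.
\item The space $(A+X)/X$ has dimension $a-s$, hence contains $\gauss{a-s}{t-s}$ subspaces of dimension $t-s$.
\item By spreadness, each such subspace lies in fewer than $[a-t+1]^{-(t-s)}|\m G(X)|$ members of $\m G(X)$.
\end{itemize}
The same product estimate as before, with $i$ replaced by $a$,
\[
\gauss{a-s}{t-s}[a-t+1]^{-(t-s)}=\prod_{j=0}^{t-s-1}\frac{[a-s-j]}{[t-s-j][a-t+1]}\le 1,
\]
shows that not all of $\m G(X)$ is covered. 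This is a contradiction.

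For part 3, there are two cases.
\begin{itemize}
\item If $\m W_i$ is empty, the bound is trivial.
\item If $\m A$ is empty, the procedure replaces everything by the zero subspace (for $t\ge1$ the condition is vacuous), so $\m W_i=\emptyset$ for $i\ge 1$.
\end{itemize}
Otherwise fix $A\in\m A$. Every $W\in\m W_i$ meets $A$ in dimension at least $t$, so it contains some $t$-subspace of $A$. By pigeonhole, some $t$-subspace $T\subset A$ lies in at least $\gauss{a}{t}^{-1}|\m W_i|$ members of $\m W_i$. Then $\m W_i(T)$ is a family of $(i-t)$-dimensional subspaces. By part 2 it contains no subfamily of the form $\m W_i(T+Y)$ that is $[a-t+1]$-subspace-spread with $|\cdot|>1$, and any such $T+Y$ is proper when the count exceeds $1$. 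Lemma~\ref{lemspreadbound} with $r=[a-t+1]$ and $k=i-t$ then gives $|\m W_i(T)|\le[a-t+1]^{i-t}$, and therefore $|\m W_i|\le\gauss{a}{t}[a-t+1]^{i-t}$.

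The numerical bounds follow from Lemma~\ref{lem:simple_gauss_bnds}: $\gauss{a}{t}\le(1+\frac1{q-1})^2q^{t(a-t)}$ and $[a-t+1]\le(1+\frac1{q-1})q^{a-t}$. These give exponent $t(a-t)+(i-t)(a-t)$. This is at most $i(a-t)$ when $t\le\ldots$; the exponent must be checked, and the stated $q^{i(a-t)}$ holds since $(i-t)+t=i$.

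The main subtlety, and the step I expect to need the most care, is part 2. The contradiction there must be drawn against a member of $\m A$ rather than of the family itself. That is exactly why the spreadness parameter is governed by $a$, and the properness of $X$ must be checked when applying Lemma~\ref{lemspreadbound} in part 3.
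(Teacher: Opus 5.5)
Your proposal is correct and is exactly the ``straightforward adaptation'' of the proof of Lemma~\ref{lemsimpeel} that the paper invokes without writing out: the witness that violates $t$-intersection in part~2 and the pigeonhole over $t$-subspaces in part~3 both come from a fixed $A\in\m A$, which is why $\gauss{a}{t}$ and $[a-t+1]$ replace $\gauss{i}{t}$ and $[i-t+1]$. Your hedging about the exponent in part~3 is unnecessary, since $t(a-t)+(i-t)(a-t)=i(a-t)$ holds exactly.
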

\section{Structure of large \texorpdfstring{$t$}{t}-intersecting families of subspaces}
\label{sec:appl}

In this section, we apply Theorem~\ref{thmstruc} and related arguments to derive the results stated in the introduction. We begin with the relative diversity result and the maximum diversity result, both of which follow directly from Theorem~\ref{thmstruc}. For the Frankl-type stability result (Theorem~\ref{thm:frankldegreetheorem_short}), we will require an auxiliary statement on cross-intersecting families (Theorem~\ref{thm:cross}), which is based on similar ideas and may be of independent interest.

\subsection{\texorpdfstring{$t$}{t}-intersecting families}
Here, we give a short proof of Theorem~\ref{thmtintintro} for a wide range of parameters. We note that the parameters are not fully optimized and are chosen so that the proof is the shortest. If one does calculations more carefully, one can get rid of the condition $n-k-t\ge 9$. 
\begin{Theorem}\label{thmtint}
    Let $n,k,t,q$ be positive integers, such that $q\ge2$, $n\ge 2k+2$, $t\le k-1$ and $n-k-t\ge 9.$ Then all largest $t$-intersecting families $\m F\subset \m V_n^k$ are isomorphic to $\m G_X:=\{ F \in \m V^k_n: X \subset F \}$ for some $X \in \m V^t_n$.
\end{Theorem}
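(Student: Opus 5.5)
The plan is to apply Theorem~\ref{thmstruc} with $s=0$. This gives a $t$-intersecting family $\m C_t$ of subspaces of dimension at most $t$. Any $t$-intersecting family of subspaces of dimension $\le t$ consists of $t$-subspaces that pairwise meet in dimension $t$, so it is either empty or a single $t$-subspace $X$. The remainder satisfies
\[
|\m F\setminus \m F[\m C_t]|\le C q^{(k-t)(n-k)-(n-k-t-1)},
\]
with $C$ the constant from Theorem~\ref{thmstruc} for $s=0$. Since $n\ge 2k+2=2k+s+2$, we may take $C=\big(\frac q{q-1}\big)^{3}(1+\frac{q+1}{q^2-q-1})^2$. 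This is an absolute constant, at most $8\cdot(1+3)^2=128$ for $q=2$ and much smaller for larger $q$.

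Let $\m F$ be a largest $t$-intersecting family. Since $\m G_X$ is $t$-intersecting, we have $|\m F|\ge\gauss{n-t}{k-t}\ge q^{(k-t)(n-k)}$. We now split into two cases according to $\m C_t$.

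\textbf{Case 1: $\m C_t=\emptyset$.} Then $|\m F|\le Cq^{(k-t)(n-k)-(n-k-t-1)}\le C q^{-8}q^{(k-t)(n-k)}$, using $n-k-t-1\ge 8$. We need $C<q^{8}$. For $q=2$ this reads $128<256$, which holds. For larger $q$ the inequality is easier. So Case 1 is impossible.

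\textbf{Case 2: $\m C_t=\{X\}$.} Suppose $\m F\ne \m G_X$. Since $\m F$ is maximal, $\m F\not\subset\m G_X$, so there is some $F_0\in\m F$ with $X\not\subset F_0$. Every member of $\m F[X]$ must meet $F_0$ in dimension at least $t$ while containing $X$. Let $d=\dim(X\cap F_0)\le t-1$. Pass to the quotient $V/X$, where members of $\m F[X]$ become $(k-t)$-subspaces and $(F_0+X)/X$ has dimension $k-d\le k$. The condition $\dim(F\cap F_0)\ge t$ forces $F/X$ to meet $(F_0+X)/X$ nontrivially: otherwise $F\cap(F_0+X)=X$, and then $\dim(F\cap F_0)=d<t$.

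By Lemma~\ref{lem:disj_subspaces}, the number of $(k-t)$-subspaces of $V/X$ meeting a fixed subspace of dimension at most $k$ is at most
\[
\gauss{n-t}{k-t}-q^{(k-t)k}\gauss{n-t-k}{k-t}.
\]
We therefore need the loss $q^{(k-t)k}\gauss{n-k-t}{k-t}\ge q^{(k-t)(n-k)}$ to exceed the remainder term $Cq^{(k-t)(n-k)-(n-k-t-1)}$. The same numerical check as in Case 1 gives this, with room to spare. Hence
\[
|\m F|<\gauss{n-t}{k-t}=|\m G_X|,
\]
which contradicts maximality. So $\m F=\m G_X$.

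The main obstacle is purely numerical: confirming that $C q^{-(n-k-t-1)}<1$ under $n-k-t\ge 9$ for $q=2$, using the precise constant at $s=0$. If the crude bound $C\le128$ is too tight, I would use $1+\frac{q+1}{q^2-q-1}=4$ and $\big(\frac q{q-1}\big)^3=8$ exactly. That gives $C=128<2^8$, so the argument should go through.
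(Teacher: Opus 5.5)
Your proposal is correct and is essentially the paper's proof. Both apply Theorem~\ref{thmstruc} with $s=0$ to obtain a single $t$-space $X$ with remainder at most $2^7q^{(k-t)(n-k)-(n-k-t-1)}$, and then use a member $F_0\in\m F$ not containing $X$ to bound $|\m F[X]|$. The differences are cosmetic: the paper bounds $|\m F[X]|\le[k]\gauss{n-t-1}{k-t-1}\le q^{-2}\gauss{n-t}{k-t}$ by a union bound where you use a complement count via Lemma~\ref{lem:disj_subspaces}, and you handle the case $\m C_t=\emptyset$ explicitly, which the paper leaves implicit.
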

\begin{proof}
Take a largest $t$-intersecting family $\m F\subset \m V_n^k.$ If $\m F\subset \m G_X$ for  some $X$ then we are done. Assume not. Apply Theorem~\ref{thmstruc} to $\m F$ with $s=0$ and obtain that there exists $X\in \m V^t_n$ such that $$|\m F\setminus \m F[X]|\le 2^7 q^{(k-t)(n-k)-(n-k-t-1)}\le \frac 12 q^{(k-t)(n-k)}.$$ In the first inequality, we used that $C\le 2^7$ for $n\ge 2k+2$ (the bound comes from the case $q=2$). In the second inequality, we used the assumption $n-k-t\ge 9$. Also, there is a subspace  $Y\in \m F\setminus \m F[X]$, and thus all subspaces in $\m F[X]$ must intersect $Y$ outside $X$. It implies that 
$$|\m F[X]|\le [k]\gauss{n-t-1}{k-t-1}\le \frac{[k][k-t]}{[n-t]}\gauss{n-t}{k-t}\le q^{-2} \gauss{n-t}{k-t}.$$
We have $|\m G_X|=\gauss{n-t}{k-t}\ge q^{(k-t)(n-k)},$ and combining the two displayed inequalities, we get
\begin{align*} |\m F|= |\m F[X]|+|\m F\setminus \m F[X]|\le \frac 12 q^{(k-t)(n-k)}+ q^{-2} \gauss{n-t}{k-t}<|\m G_X|. \qedhere \end{align*}
\end{proof}

\subsection{Large relative diversity}

\begin{Theorem}\label{thmdiv}
    Assume that $\m F\subset \m V_n^k$ is $t$-intersecting and $n\ge 2k+1$. Fix some $\alpha\in (0,1)$ and assume that no $t$-subspace is contained in more than an $\alpha$-fraction of the subspaces in $\m F$. Then
    \[
    |\m F| \le C (1-\alpha)^{-1} q^{(k-t)(n-k) - (n-k-t-1)},
    \]
    where $C$ can be taken to be $2 \left( \frac{q}{q-1} \right)^{k-t+1} \left(1 + \frac{q+1}{q^2-q-1} \right)^2$
    for $n=2k+1$ and $\left( \frac{q}{q-1} \right)^3 \left(1 + \frac{q+1}{q^2-q-1} \right)^2$
    for $n \geq 2k+2$.
\end{Theorem}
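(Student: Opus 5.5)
Apply Theorem~\ref{thmstruc} with $s=0$. The hypotheses hold: $k\ge t+1$ is implicit (for $k=t$ a $t$-intersecting family is a single subspace, which has a $t$-subspace of degree $1>\alpha$, so nothing to prove), and $n\ge 2k+1=2k+s+1$. This produces a $t$-intersecting family $\m C_t$ of subspaces of dimension at most $t$. Members of $\m F$ have dimension $k\ge t$ and pairwise meet in dimension $\ge t$, so $\m C_t\subset\m V_n^{t}$; lower-dimensional members cannot $t$-intersect anything. A $t$-intersecting family of $t$-subspaces is either empty or a single subspace $X$, since two distinct $t$-subspaces meet in dimension $<t$. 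Thus $\m F[\m C_t]$ is either empty or equal to $\m F[X]$ for one $t$-subspace $X$. The constant $C$ for $s=0$ is exactly the one in the statement, namely $2(\frac q{q-1})^{k-t+1}(1+\frac{q+1}{q^2-q-1})^2$ for $n=2k+1$ and $(\frac q{q-1})^{3}(\ldots)^2$ for $n\ge 2k+2$. So
\[
|\m F\setminus\m F[\m C_t]|\le C q^{(k-t)(n-k)-(n-k-t-1)}=:R .
\]

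Next use the relative diversity hypothesis. In the case $\m C_t=\{X\}$, we have $|\m F[X]|\le\alpha|\m F|$, so $|\m F|\le \alpha|\m F|+R$, which gives $|\m F|\le(1-\alpha)^{-1}R$. In the case $\m C_t=\emptyset$, $|\m F|\le R\le(1-\alpha)^{-1}R$ directly.

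There is no real obstacle, since the heavy lifting is Theorem~\ref{thmstruc}. The only points needing care are two observations. First, $\m C_t$ has at most one element and all of its members have dimension exactly $t$, which follows from the $t$-intersecting property. Second, the degenerate case $k=t$ must be disposed of separately.
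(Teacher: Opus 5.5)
Your proof is correct and follows the paper's argument: apply Theorem~\ref{thmstruc} with $s=0$, observe that $\mathcal C_t$ is at most a single $t$-subspace $X$, and rearrange $|\mathcal F|\le \alpha|\mathcal F|+|\mathcal F\setminus\mathcal F[X]|$. You are slightly more careful than the paper. The paper asserts that $\mathcal C_t$ is exactly one $t$-subspace, and it addresses neither the possibility $\mathcal C_t=\emptyset$ nor the degenerate case $k=t$, where the hypothesis $k\ge s+t+1$ of Theorem~\ref{thmstruc} fails.
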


\begin{proof}
Apply Theorem~\ref{thmstruc} with $s=0$. Then $\m C_t$ consists of a single $t$-subspace $X$, and
\[
|\m F\setminus \m F[X]|\le C q^{(k-t)(n-k)-(n-k-t-1)},
\]
where $C$ is as specified in Theorem~\ref{thmstruc}.

By assumption, no $t$-subspace is contained in more than an $\alpha$-fraction of the members of $\m F$, and therefore $|\m F[X]|\le \alpha |\m F|$. Hence,
\[
|\m F| = |\m F[X]| + |\m F\setminus \m F[X]| \le \alpha |\m F| + |\m F\setminus \m F[X]|.
\]
Rearranging, we obtain
\[
|\m F| \le (1-\alpha)^{-1} |\m F\setminus \m F[X]|.
\]
Combining this with the bound above yields the desired result.
\end{proof}

The families that are, in a certain sense, the farthest from stars are intersecting families $\ff$
in which all $1$-subspaces have the same popularity.
Such a family $\ff$ is called a \emph{regular intersecting family}.
See~\cite{IK2019} for an investigation of the set case.
\begin{Proposition}\label{propreg}
    There are no regular intersecting families in $\m V_n^k$ for $n\ge 2k$. 
\end{Proposition}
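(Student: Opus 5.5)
The plan is a double-counting argument; the structural machinery of Section~\ref{sec:peeling} should not be needed. Let $\ff\subset\m V_n^k$ be a nonempty intersecting family (the empty family is excluded as trivial), and suppose for contradiction that it is regular, i.e.\ every $1$-subspace $P$ lies in exactly $d$ members of $\ff$.

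\textbf{Step 1: compute $d$.} Count pairs $(P,F)$ with $P\in\m V_n^1$, $F\in\ff$ and $P\subset F$. Each $F$ contains $[k]$ one-dimensional subspaces, and each $P$ lies in $d$ members of $\ff$. Hence $|\ff|\,[k]=[n]\,d$, so
\[
d=\frac{[k]}{[n]}\,|\ff|.
\]

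\textbf{Step 2: use the intersecting property.} Fix any $F_0\in\ff$. Every member of $\ff$ meets $F_0$ in a subspace of dimension at least $1$, so it contains some $1$-subspace $P\subset F_0$. Covering $\ff$ by the sets $\ff[P]$ over the $[k]$ points $P\subset F_0$ gives
\[
|\ff|\le\sum_{P\subset F_0}|\ff[P]|=[k]\,d=\frac{[k]^2}{[n]}\,|\ff|.
\]
Since $|\ff|>0$, this forces $[k]^2\ge[n]$.

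\textbf{Step 3: contradict this for $n\ge 2k$.} Because $[n]\ge[2k]$, it suffices to show $[k]^2<[2k]$. Write
\[
[2k]=\frac{(q^k-1)(q^k+1)}{q-1}, \qquad [k]^2=\frac{(q^k-1)^2}{(q-1)^2}.
\]
The inequality $[k]^2<[2k]$ is then equivalent to $[k]<q^k+1$. This holds because $[k]=1+q+\dots+q^{k-1}\le q^k-1<q^k+1$ for $q\ge2$. This contradiction completes the argument.

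The only real obstacle is the degenerate case. The contradiction in Step 2 needs $|\ff|>0$, so the statement must be read as concerning nonempty families. Beyond that, the argument is routine.
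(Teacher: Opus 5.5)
Your proposal is correct and follows essentially the same route as the paper. Both arguments double count point--member incidences to get $|\mathcal F|\,[k]=[n]\,d$, then cover $\mathcal F$ by the stars of the $[k]$ points of one fixed member to force $[k]^2\ge[n]$; you work with the non-strict covering bound where the paper states it strictly, and you explicitly verify $[k]^2<[2k]\le[n]$ and exclude the empty family, both of which the paper leaves implicit.
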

\begin{proof} 
Let $r$ be the number of elements of $\ff$ that contain a given $1$-subspace. Since the family is regular, it is the same for any $1$-subspace.
On the one hand, by standard double counting,
\[
 |\ff| \cdot [k] = [n] \cdot r.
\]
On the other hand, a $k$-subspace of $\ff$ meets all elements of $\ff$, so
\[
 r > |\ff|/[k].
\]
Thus, $[k]^2 > [n]$ which is false for all $n \geq 2k$ and $q \geq 2$.
\end{proof}
\subsection{The largest diversity}

The following theorem gives a precise version of Theorem~\ref{thm:maxdivshort}.
Note that, as an alternative description of $\m G_2$,
\[
 \m G_2 = \{F\in \m V_n^k: \dim(F\cap X)\ge 2\}
\]
for some $3$-dimensional subspace $X$.

\begin{Theorem}\label{thmlargestdiv}
The family $\m G_2$ has the largest diversity among intersecting families of $k$-subspaces whenever one of the following holds:
\begin{enumerate}
  \item $k = 2$, $n \geq 2k$;
  \item $k = 3$, $n \geq 2k+1$;
  \item $q\ge 5$, $k\ge 4$, $n\ge 2k+2$;
  \item $q=4$, $k\ge 6$, $n\ge 2k+2$; 
  \item $q=3$, $k\ge 9$, $n\ge 2k+2$;
  \item $q=3$, $n\ge 2k+3$, $n-k\ge 9$;
  \item $q=2$, $k\ge 4$, $n\ge 2k+3$, $n-k\ge 13$.
\end{enumerate}
Furthermore, if $q \geq 7$, $k \geq 4$, $n \geq 2k+2$,
then any family not contained in $\m G_2$ has diversity at most $2q^{-1} \gamma(\m G_2)$.
\end{Theorem}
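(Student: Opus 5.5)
We first compute $\gamma(\m G_2)$. Using the description $\m G_2=\{F:\dim(F\cap X)\ge 2\}$ with $X\in\m V^3_n$, every point of $X$ is equally popular by symmetry. The most popular point $P\subset X$ is missed exactly by those $F$ meeting $X$ in a $2$-subspace not through $P$. There are $q^2$ such $2$-subspaces of $X$, and each lies in $q^{k-2}\gauss{n-3}{k-2}$ members of $\m V^k_n$ meeting $X$ in exactly that plane (Lemma~\ref{lem:disj_subspaces}). Hence
\[
\gamma(\m G_2)=q^{k}\gauss{n-3}{k-2},
\]
which is of order $q^{(k-1)(n-k)-(n-k-2)}$.

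Next let $\ff$ be intersecting with $\gamma(\ff)\ge\gamma(\m G_2)$, and apply Theorem~\ref{thmstruc} with $t=1$, $s=1$ (this needs $n\ge 2k+2$ and $k\ge 3$). We get an intersecting family $\m C_2$ of subspaces of dimension $\le 2$ with remainder $|\ff\setminus\ff[\m C_2]|\le C q^{(k-1)(n-k)-2(n-k-3)}$. This is smaller than $\gamma(\m G_2)$ by a factor of roughly $C q^{-(n-k-4)}$, and the conditions in items (3)--(7) are exactly those making $C q^{-(n-k-4)}$, or a slightly refined version, small enough.

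We then classify intersecting families $\m C_2\subset\m V^{\le 2}_n$:
\begin{enumerate}
\item $\m C_2$ contains a $1$-subspace $P$, so all members pass through or meet it and the dominant part is a star;
\item $\m C_2$ consists of $2$-subspaces through a common point $P$;
\item $\m C_2$ consists of $2$-subspaces of a common $3$-space $X$ that do not all share a point.
\end{enumerate}
In cases (1) and (2), all of $\ff[\m C_2]$ contains $P$, so $\gamma(\ff)\le|\ff\setminus\ff[\m C_2]|<\gamma(\m G_2)$, a contradiction. In case (3), $\ff[\m C_2]\subset\m G_2$ for this $X$. Choosing $P\subset X$ as the most popular point of $\ff$, the diversity splits into the members of $\ff[\m C_2]$ avoiding $P$ plus remainder members. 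The first part is at most $\gamma(\m G_2)$.

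The main obstacle is showing that the remainder cannot raise the diversity, i.e.\ that the bounded error is not merely small but is compensated. If $\ff\not\subset\m G_2$, there is $Y\in\ff$ with $\dim(Y\cap X)\le 1$. Every member of $\ff[\m C_2]$ must meet $Y$, which forces those $F$ with $\dim(F\cap X)=2$ and $P\not\subset F$ to satisfy an extra condition. One counts these: each $2$-subspace $L\subset X$ with $L\cap Y=0$ loses a factor of order $[k]/[n-2]\approx q^{-(n-k-2)}$ of its extensions. Thus the loss is of order $\gamma(\m G_2)(1-O(q^{-1}))$ or more, while the gain from the remainder is at most $C q^{(k-1)(n-k)-2(n-k-3)}$. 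Comparing these requires care with constants for small $q$, which is why the cases are split by $q$ and $k$. The "Furthermore" bound $2q^{-1}\gamma(\m G_2)$ for $q\ge7$ comes out of the same comparison.

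Cases (1) and (2) must be handled separately. For $k=2$, intersecting families of lines are either stars or contained in a plane, and one checks directly that the plane family ($=\m G_2$) wins. For $k=3$ with $n\ge 7$, we would invoke De Boeck's classification of large maximal intersecting families of planes (Appendix~\ref{sec:planes}), together with the observation that small families have small diversity, and compare diversities across the list.
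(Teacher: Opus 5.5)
Your proposal is correct and follows the paper's skeleton: Theorem~\ref{thmstruc} with $t=s=1$, the dichotomy ``common point versus common $3$-space $X$'' for $\m C_2$, and inspection of Appendix~\ref{sec:planes} for $k\le 3$. You should also list the trivial case $\m C_2=\emptyset$. For $k=3$, note that at $n=7$ the family of all $3$-subspaces of a $5$-space has diversity $q^3[4]=\gamma(\m G_2)$, so expect a tie there.

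The genuine difference is how you use a member $Y\in\ff$ with $\dim(Y\cap X)\le 1$. The paper bounds all of $\ff[\m C_2]$ by $(q+1)\gauss{n-2}{k-2}+q^2[k]\gauss{n-3}{k-3}$ and shows $|\ff|<\gamma(\m G_2)$. The first term is about $\frac{q+1}{q^2}\gamma(\m G_2)$, which is too large for $q=2$; that is why the paper needs the separate case split 5a--5c there.

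You bound only the members of $\ff[\m C_2]$ missing a fixed point $P\subset X$. These number at most $q^{-1}\gamma(\m G_2)$ from the $q$ planes of $X$ through $Y\cap X$ but not through $P$, plus $q^2[k]\gauss{n-3}{k-3}$ from the remaining planes. So you only need $|\ff\setminus\ff[\m C_2]|<(1-q^{-1}-o(1))\gamma(\m G_2)$. This is weaker than the paper's requirement $\alpha+\beta<1$, and a single argument covers all of items (3)--(7), including $q=2$, as well as the $2q^{-1}$ stability clause.

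Choosing $P:=Y\cap X$ whenever this is a point removes even the $q^{-1}\gamma(\m G_2)$ term. This gives $\gamma(\ff)\le q^2[k]\gauss{n-3}{k-3}+|\ff\setminus\ff[\m C_2]|$, which is the paper's Case 5c made uniform.

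One slip: the proportion of extensions of such a plane $L$ that meet $Y$ is about $[k][k-2]/[n-2]\approx q^{2k-n-1}$, not $[k]/[n-2]\approx q^{-(n-k-2)}$. Since $n\ge 2k+2$, it is still at most about $q^{-3}$, so your conclusion stands. However, the numerical checks, which you only assert, should use this exponent.
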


\begin{proof}
The cases $k\in\{2,3\}$ follow by inspecting the classification in Section~\ref{sec:planes}.
We therefore assume $k\ge 4$.
Our stability claim that any family not in $\m G_2$ has diversity at most $2q^{-1} \gamma(\m G_2)$ follows from a more careful analysis of our arguments
which we skip for the sake of brevity.

\medskip

\noindent
\textbf{Step 1: Size and diversity of $\m G_2$.}
By Lemma~\ref{lem:disj_subspaces},
\begin{align}
|\m G_2|=\gauss{n-3}{k-3} + \gauss{3}{2}\, q^{k-2}\gauss{n-3}{k-2}.\label{eq:sizeGtwo}
\end{align}
The maximum degree in $\m G_2$ is
\[
\gauss{n-3}{k-3} + \gauss{2}{1}\, q^{k-2}\gauss{n-3}{k-2},
\]
and therefore
\begin{equation}\label{bounddiv}
\gamma(\m G_2)=\left(\gauss{3}{2}-\gauss{2}{1}\right) q^{k-2}\gauss{n-3}{k-2}
= q^{k}\gauss{n-3}{k-2}
\ge q^{(k-2)(n-k)+2}.
\end{equation}

\medskip

\noindent
\textbf{Step 2: Reduction via Theorem~\ref{thmstruc}.}
Let $\ff$ be an intersecting family of maximum diversity. Applying Theorem~\ref{thmstruc} with $t=1$ and $s=1$, we obtain
\[
|\ff\setminus \ff[\m C_2]|
\le C q^{(k-2)(n-k)+2 -(n-k-4)},
\]
where $C$ depends on $n$ as specified in Theorem~\ref{thmstruc}.

In all cases (3)--(6), straightforward estimates yield
\[
\frac{|\ff\setminus \ff[\m C_2]|}{\gamma(\m G_2)} \le \alpha < 1
\]
for suitable $\alpha$ depending on $(q,k,n)$ (specifically, $\alpha<\frac 12$ for cases with $q\ge 4$ and $\alpha\le 0.1$ for cases with $q=3$). In particular, $\ff[\m C_2]\neq\emptyset$ and contributes non-trivially to the diversity.

\medskip

\noindent
\textbf{Step 3: Structure of $\m C_2$.}
Since $\m C_2$ is an intersecting family of subspaces of dimension at most $2$, there are two possibilities.

\smallskip

\emph{Case 1.} All members of $\m C_2$ contain a fixed $1$-subspace $P$.

Then $\gamma(\ff[\m C_2])=0$, and hence
\[
\gamma(\ff)\le |\ff\setminus \ff[\m C_2]| < \alpha \gamma(\m G_2),
\]
a contradiction.

\smallskip

\emph{Case 2.} All members of $\m C_2$ lie in a fixed $3$-subspace $X$.

In what follows, we assume that this holds. Without loss of generality, $X$ is the same as in the definition of $\m G_2$. Then necessarily
\[
|\ff\cap \m G_2| > (1-\alpha)\gamma(\m G_2).
\]

\medskip

\noindent
\textbf{Step 4: Excluding $1$-dimensional intersections.}
We claim that every $F\in \ff$ satisfies $\dim(F\cap X)\ge 2$.

Suppose not, and take $F\in \ff$ with $\dim(F\cap X)\le 1$. Then, by Lemma~\ref{lem:disj_subspaces}, at least $q^2$ of the $2$-subspaces of $X$ are disjoint from $F\cap X$. For each such $Y$, at most $[k]\gauss{n-3}{k-3}$ of its extensions meet $F$. Therefore,
\begin{align*}
|\ff[\m C_2]|
&\le (q+1)\gauss{n-2}{k-2} + q^2 [k]\gauss{n-3}{k-3} \\
&< \beta q^{(k-2)(n-k)+2},
\end{align*}
where straightforward calculations using Lemma~\ref{lem:simple_gauss_bnds} imply that $\beta<1$ (specifically, $\beta=\frac{8}{9}$ for $q=3$ and $\beta<\frac{1}{2}$ for $q\ge 4$).

In each of the cases, we have $\alpha+\beta<1$, which contradicts $\gamma(\ff)\ge \gamma(\m G_2)$.
Thus, all members of $\ff$ satisfy $\dim(F\cap X)\ge 2$, and hence $\ff\subset \m G_2$.
Similarly, one can verify that $\alpha+\beta < \frac{2}{q}$ for $q \geq 7$.

\medskip

\noindent
\textbf{Step 5: The case $q=2$.}
Assume $q=2$, $k\ge 4$, $n\ge 2k+3$, and $n-k\ge 13$.

By \eqref{bounddiv} and Lemma~\ref{lem:simple_gauss_bnds}~part~3,
\[
\gamma(\m G_2)\ge q^k\gauss{n-3}{k-2}\ge 2.8\, q^{(k-2)(n-k)+2}.
\]
On the other hand, Theorem~\ref{thmstruc} with $t=1$ and $s=1$ gives
\[
|\ff\setminus \ff[\m C_2]|
\le 2^8 q^{(k-2)(n-k)+2-(n-k-4)}
\le \tfrac12 q^{(k-2)(n-k)+2},
\]
since $n-k\ge 13$.

Here, we need to more carefully analyze the possible intersections of members of $\ff$ with $X$.

\smallskip

\emph{Case 5a:} There exists $F\in\ff$ with $F\cap X=\{0\}$.

Then every $2$-subspace of $X$ is disjoint from $F\cap X$, and therefore
\[
|\ff[\m C_2]|
\le \gauss{3}{2}\,[k]\gauss{n-3}{k-3}.
\]
Using $\gauss{3}{2}=7$ and Lemma~\ref{lem:simple_gauss_bnds},
\[
|\ff[\m C_2]|< \tfrac78\, q^{(k-2)(n-k)+2}.
\]
Hence
\[
|\ff|
\le |\ff\setminus \ff[\m C_2]|+|\ff[\m C_2]|
< \Big(\tfrac12+\tfrac78\Big) q^{(k-2)(n-k)+2}
< \gamma(\m G_2),
\]
a contradiction.

\smallskip

\emph{Case 5b:} There exist $F,F'\in\ff$ such that
\[
\dim(F\cap X)=\dim(F'\cap X)=1
\quad\text{and}\quad
F\cap X\neq F'\cap X.
\]

Then all members of $\ff[\m C_2]$ either contain the unique $2$-subspace of $X$ spanned by the lines $F\cap X$ and $F'\cap X$, or contain one of the remaining $2$-subspaces of $X$ and in addition intersect either $F$ or $F'$  outside $X$. Consequently,
\[
|\ff[\m C_2]|
\le \gauss{n-2}{k-2} + (\gauss{3}{2}-1)[k]\gauss{n-3}{k-3}.
\]
Since $\gauss{3}{2}-1=6$, Lemma~\ref{lem:simple_gauss_bnds} yields
\[
|\ff[\m C_2]|< \tfrac74\, q^{(k-2)(n-k)+2}.
\]
Thus
\[
|\ff|
\le |\ff\setminus \ff[\m C_2]|+|\ff[\m C_2]|
< \Big(\tfrac12+\tfrac74\Big) q^{(k-2)(n-k)+2}
< \gamma(\m G_2),
\]
again a contradiction.

\smallskip

\emph{Case 5c:} All members $F\in\ff$ with $\dim(F\cap X)=1$ satisfy $F\cap X=P$ for the same $1$-subspace $P$.

Then $\gamma(\ff\setminus \ff[\m C_2])=0$, and hence
\[
\gamma(\ff)\le |\ff[\m C_2]\setminus \ff[\m C_2[P]]|.
\]
The right-hand side is at most $\gamma(\m G_2)$, with equality only if $\ff$ contains all $k$-subspaces intersecting $X$ in a $2$-subspace disjoint from $P$. But then the intersecting property is violated unless there is no $F\in\ff$ with $F\cap X=P$. Therefore, in the extremal case we must have $\ff\subset \m G_2$.

This completes the proof.
\end{proof}

\subsection{A Frankl degree-type result}

In this section, we prove a Frankl degree-type theorem for vector spaces (Theorem~\ref{thm:frankldegreetheorem_short}). We begin with an auxiliary lemma estimating the size and diversity of the families $\m G_i$ (recalled below), then prove a result on cross-intersecting families (Theorem~\ref{thm:cross}), and conclude with the  proof of the following theorem that is a precise version of Theorem~\ref{thm:frankldegreetheorem_short}. 

First, let us recast the extremal construction. Let $i \in \{2, \ldots, k\}$. Fix a $1$-subspace $X$ and an $i$-dimensional subspace $Y_i$ disjoint from $X$, and recall that
\[
\m G_i = \m G_i^{\Delta}\cup \m G_i^\gamma,
\]
where
\begin{align*}
    \m G_i^{\Delta}&:=\{F\in \m V_n^k: X\subset F,\ \dim(F \cap Y_i) \geq 1\},\\
    \m G_i^\gamma&:=\{G\in \m V_n^k: Y_i\subset G+X, \dim(G\cap X) = 0\}.
\end{align*}
Note that by Lemma~\ref{lem:disj_subspaces}, for each $k$-space $G_0$ with $Y_i \subset G_0$,
there are precisely $q^k$ $k$-spaces $G$ with $(G + X) \cap Y_i = G_0$.

\begin{Theorem}\label{thm:frankldegreetheorem}
    Let $n\ge 2k+1$, $k \ge 3$, and assume that $\ff\subset \m V_n^k$ is an intersecting family of subspaces.
    Assume that for some $i=3,\ldots, k$ we have $\gamma(\ff)\ge q^{k} \gauss{n-i-1}{k-i}$.
    Suppose that one of the following conditions holds:
    \begin{enumerate}
     \item $q \geq 2$, $k=3$;
     \item $i \geq 4$, $k \geq 4$, $k \leq \frac12 q \ln q - 9$;
     \item $q \geq 7$, $i \geq 3$, $k \geq 4$, $n \geq 2k+2$;
     \item $q \geq 5$, $i \geq 4$, $k \geq 4$, $n \geq 2k+2$;
     \item $q = 4$, $i \geq 4$, $k \geq 6$, $n \geq 2k+2$;
     \item $q = 3$, $i \geq 4$, $k \geq 9$, $n \geq 2k+4$;
     \item $q = 3$, $i \geq 4$, $n \geq 2k+4$, $n-k \geq 9$;
     \item $q=2$, $i \geq 4$, $k \geq 6$, $n \geq 5k$.
    \end{enumerate}
    Then  $$|\ff|\le |\m G_i|.$$ 
    Moreover, equality holds only if $\m F$ is isomorphic to $\m G_i$
    or, if $i=3$, to $\m G_2$ or $\m G_3$.
\end{Theorem}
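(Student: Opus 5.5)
Let $P$ be a $1$-subspace of maximum degree in $\ff$. Split $\ff = \ff[P] \cup \ff'$, where $\ff' := \ff \setminus \ff[P]$, so $|\ff'| = \gamma(\ff)$. The plan is to bound $|\ff[P]|$ in terms of the structure of $\ff'$, and to show that the best trade-off occurs when $\ff'$ consists exactly of the $k$-spaces $G$ with $Y_i \subset G + P$.

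\textbf{Step 1: cross-intersecting reformulation.} Pass to the quotient by $P$. The families
\[
\m A := \ff(P) \subset \m V_{n-1}^{k-1}, \qquad \m B := \{(G+P)/P : G \in \ff'\} \subset \m V_{n-1}^{k}
\]
are cross-intersecting. Each member of $\m B$ arises from at most $q^k$ members $G$ of $\ff'$ (this is Lemma~\ref{lem:disj_subspaces}). Hence $|\m B| \ge q^{-k}\gamma(\ff) \ge \gauss{n-i-1}{k-i}$. This lower bound is exactly the number of $k$-spaces of $V/P$ containing a fixed $i$-space $Y_i$. The goal becomes the cross-intersecting statement
\[
|\m A| + q^k |\m B| \le |\m G_i^\Delta| + |\m G_i^\gamma|
\]
whenever $|\m B| \ge \gauss{n-i-1}{k-i}$. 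This is Theorem~\ref{thm:cross}, which I would prove first as an auxiliary result.

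\textbf{Step 2: proving the cross-intersecting bound.} Run the peeling of Lemma~\ref{lemsimpeel2} on $\m B$ with $t=1$, with spreadness governed by $\m A$ (uniformity $k-1$). This yields a low-dimensional family $\m C$ such that $\m B$ is mostly covered by $\m B[\m C]$, and the remainder is controlled by part 3 of that lemma, in the same way as in Lemma~\ref{lem:sum_bnd}. Every member of $\m C$ still cross-intersects $\m A$.

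Case split on the minimum dimension $j$ of a member $W \in \m C$:
\begin{itemize}
\item If $j \le i-1$ with $W$ carrying substantial weight, then $\m A$ must meet the $j$-space $W$. This gives $|\m A| \le [j]\gauss{n-2}{k-2}$, which is far smaller than $\gauss{n-2}{k-2}$ minus the $q^{(k-1)i}$-type term lost in $|\m G_i^\Delta|$, because the star is shrunk heavily.
\item If all relevant members have dimension $\ge i$, then $|\m B| \ge \gauss{n-i-1}{k-i}$ forces, by counting, either a single $i$-space $Y$ with $\m B \subset \m V[Y]$, or a $\m B$ whose members contain several distinct subspaces.
\end{itemize}
In the second alternative, $\m A$ must meet two or more of these subspaces. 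The count of Step 4 of Theorem~\ref{thmlargestdiv} (a forced extra intersection costing a factor $[k]\gauss{\cdot}{\cdot}/\gauss{\cdot}{\cdot+1}\approx q^{-(n-2k)}$) shows this is worse than the configuration $\m B = \m V[Y]$, $\m A = \{A : A\cap Y \ne 0\}$, which is exactly $\m G_i$. Finally, a convexity/monotonicity argument in $|\m B|$ handles $\m B$ strictly between: the function $|\m B| \mapsto \max|\m A|$ decreases faster than $q^k|\m B|$ increases once $|\m B|$ exceeds $\gauss{n-i-1}{k-i}$, comparing $\m G_i$ with $\m G_{i-1}$.

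\textbf{Step 3: conditions and equality.} Plug in the constants of Lemma~\ref{lem:simple_gauss_bnds} and Lemma~\ref{lemsimpeel2} to verify, in each of the listed regimes (1)–(8), that the peeling remainder is smaller than the gap between $|\m G_i|$ and the competing configurations. Case (1), $k=3$, is handled via the classification in Section~\ref{sec:planes}. For equality, trace back: $\m B$ must be the full star of an $i$-space $Y$, $\m A$ must be all $(k-1)$-spaces meeting $Y$, and each $B$ must be lifted by all $q^k$ choices, giving $\ff \cong \m G_i$.

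When $i=3$, the comparison with $\m G_2$ is tight: $\gamma(\m G_2) = q^k\gauss{n-3}{k-2}$ also satisfies the hypothesis, and $|\m G_2| = |\m G_3|$. So $\m G_2$ appears as a second extremal family, and it arises from the $j=i-1=2$ case, which must be analysed exactly rather than crudely.

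\textbf{Main obstacle.} I expect the main obstacle to be Step 2: making the remainder from peeling $\m B$ with spreadness parameter $[k]$ small enough relative to the fine gap $|\m G_i|-|\m G_{i+1}|$. That gap is only of relative order $q^{-(n-2k)}$, and this is what forces the assorted conditions on $q$, $k$, $n$. To control it, I would first peel $\m B$ only down to dimension $i$, and treat exactly the dimension-$i$ and dimension-$(i-1)$ layers by hand.
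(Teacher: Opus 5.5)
There is a genuine gap in your Step 1 reduction. The cross-intersecting statement you set as the goal is false as posed, because it has no upper bound on $|\m B|$. Take $\m A=\{A_0\}$ and let $\m B$ consist of all $k$-spaces of $V/P$ meeting $A_0$. This pair is cross-intersecting and satisfies $|\m B|\ge\gauss{n-i-1}{k-i}$, yet $q^k|\m B|$ exceeds $|\m G_i|$ by a factor of order $q^{n-i}$. For the same reason your ``monotonicity in $|\m B|$'' step cannot hold. Your case $\dim W\le i-1$ also only controls $|\m A|$: to get below $|\m G_i|\approx[i]\gauss{n-2}{k-2}$ you additionally need $|\ff'|\le\delta q^k\gauss{n-i}{k-i+1}$. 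That is exactly the extra hypothesis of Theorem~\ref{thm:cross}, whose $\m B$ is your $\ff'$, with $\delta<1$ when $i=3$.

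Supplying this bound requires two facts you never use: that $P$ has maximum degree, and that $\ff'$ is itself intersecting. In the paper it comes from the global structure theory. Assume $|\ff|\ge|\m G_i|\ge[i]q^{(k-2)(n-k)}$. Then Theorem~\ref{thmdiv} (for $n=2k+1$) or Theorem~\ref{thmlargestdiv} (for $n\ge 2k+2$), both resting on Theorem~\ref{thmstruc}, give $\gamma(\ff)<\frac{41}{100}|\ff|$ for $q\ge 3$. So the most popular point $X$ carries most of $\ff$, and there are two cases.
If $\gamma(\ff)$ lies in a window where Theorem~\ref{thm:cross} applies for some $3\le j\le i$, that theorem gives $|\ff|\le|\m G_j|\le|\m G_i|$ by Lemma~\ref{lem:bndsfranklfams}.
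If $\gamma(\ff)$ is larger, one truncates $\ff\setminus\ff[X]$ into the $j=3$ window and gets $|\ff[X]|\lesssim|\m G_3|$, contradicting $|\ff[X]|\ge\frac{59}{100}|\m G_i|$ when $i\ge 4$.

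For $i=3$ (with $q\ge 7$), $\m G_2$ enters through the stability part of Theorem~\ref{thmlargestdiv} once $\gamma(\ff)>2q^{-1}\gamma(\m G_2)$, not through a dimension-$2$ layer of a peeling. Note also that $|\m G_{i+1}|/|\m G_i|$ is about $q$, so there is no gap of relative order $q^{-(n-2k)}$ to fight; your identified ``main obstacle'' is misplaced.

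Your Step 2 also fails quantitatively. If you peel $\m B$ directly, with spreadness governed by $\m A$ (uniformity $k-1$), Lemma~\ref{lemsimpeel2}, part~3, only gives $|\m W_j|\le[k-1]^j$. The layer $j=i+1$ alone is then bounded by roughly $q^{(i+1)(k-2)+(k-i-1)(n-k-1)}$. This is not smaller than the lower bound $\gauss{n-i-1}{k-i}\approx q^{(k-i)(n-k-1)}$ on $|\m B|$ unless $n>(i+2)k-2i-1$, so it fails for every $n\le 5k-7$. In that range the lemma does not even guarantee $\m B[\m C]\neq\emptyset$, and your case analysis on $\m C$ has no content.

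The paper avoids this with a two-stage peeling. It first peels $\m A(X)$ down to dimension $\lfloor (k-i)/2\rfloor+1$, with spreadness $[k]$; the discarded part is a controlled fraction of $|\m G_i|$. This lowers the spreadness threshold for $\tilde{\m B}$ to $[\lfloor (k-i)/2\rfloor+1]$. Only then does it peel $\tilde{\m B}$ down to dimension $\lfloor (k+i)/2\rfloor$, losing less than a $(1-q^{-1})$-fraction of it.

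This is paired with a preliminary step absent from your plan: unless $\tilde{\m B}$ is exactly a full $i$-star, no $i$-space lies in more than a $q^{2k-n}$-fraction of $\tilde{\m B}$. (Otherwise every member of $\m A(X)$ must meet that $i$-space, and the members of $\tilde{\m B}$ not containing it force $|\m A|+|\m B|<|\m G_i|$.) This step handles near-stars, which your dichotomy ``$\m B\subset\m V[Y]$ or several subspaces'' skips. It also guarantees that the peeled family has common intersection of dimension at most $i-1$. The surviving members of $\m A$ are then counted as meeting that intersection or meeting two members of the peeled family, and each of these counts is a small fraction of $|\m G_i|$.
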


The ``2 out of 3'' family $\m G_2$
has size roughly $q^{(k-2)(n-k)+2}$ and maximizes the diversity by  Theorem~\ref{thmlargestdiv}.
The following Lemma~\ref{lem:bndsfranklfams} in particular shows that 
for $i \geq 4$, the families $\m G_i$ are larger.

\begin{Lemma}\label{lem:bndsfranklfams}
Let $n \geq 2k+1$, $k\geq 4$, and $3 \leq i \leq k$. Then:
\begin{enumerate}
\item $|\m G_i| \le (\tfrac{q}{q-1})(1 + \tfrac{q+1}{q^2-q-1}) q^{(k-2)(n-k)+(i-1)} + (1 + \tfrac{q+1}{q^2-q-1}) q^{(k-i)(n-k-1)+k}$;
\item $|\m G_i| \ge [i]\cdot q^{(k-2)(i-1)} \gauss{n-i-1}{k-2} \ge [i] q^{(k-2)(n-k)}$;
\item $|\m G_i| > |\m G_{i-1}|$ for $4 \le i \le k$;
\item $|\m G_2| = |\m G_3|$;
\item $\gamma(\m G_i) = q^{k} \gauss{n-i-1}{k-i} = q^i(\gauss{n-i}{k-i} - \gauss{n-i-1}{k-i-1})$.
\end{enumerate}
\end{Lemma}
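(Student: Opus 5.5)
The plan is to first compute $|\m G_i|$ exactly from its two parts, and then read off all five claims from that formula using Lemmas~\ref{lem:disj_subspaces}, \ref{lem:qbinid} and \ref{lem:simple_gauss_bnds}.

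\emph{The two parts.} For $\m G_i^\Delta$, pass to the quotient $V/X$, which has dimension $n-1$. The members of $\m G_i^\Delta$ correspond to the $(k-1)$-spaces meeting the $i$-dimensional image of $Y_i$ nontrivially. By Lemma~\ref{lem:disj_subspaces},
\[
|\m G_i^\Delta| = \gauss{n-1}{k-1} - q^{(k-1)i}\gauss{n-i-1}{k-1}.
\]
For $\m G_i^\gamma$, the space $G+X$ is a $(k+1)$-space containing the $(i+1)$-space $X+Y_i$. There are $\gauss{n-i-1}{k-i}$ such $(k+1)$-spaces. Within each one, $G$ is a hyperplane avoiding $X$, and there are $q^k$ of these. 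Hence $|\m G_i^\gamma| = q^k\gauss{n-i-1}{k-i}$. The two parts are disjoint, so $|\m G_i|$ is the sum of these two quantities.

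\emph{Part 5.} The identity $q^k\gauss{n-i-1}{k-i}=q^i\bigl(\gauss{n-i}{k-i}-\gauss{n-i-1}{k-i-1}\bigr)$ is the first identity of Lemma~\ref{lem:qbinid}. To get $\gamma(\m G_i)=|\m G_i^\gamma|$, I must show that $X$ is the most popular $1$-subspace. The degree of $X$ is $|\m G_i^\Delta|$, which is at least the part~2 bound $[i]q^{(k-2)(n-k)}$. Any other point $P$ has degree at most $\gauss{n-2}{k-2}+|\m G_i^\gamma|$. This is smaller by Lemma~\ref{lem:simple_gauss_bnds}, since $|\m G_i^\gamma|\le c\,q^{(k-i)(n-k-1)+k}$ is of lower order for $i\ge3$ and $n\ge 2k+1$. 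This comparison is the only place needing real estimates, and I expect it to be the main obstacle; it requires some care with constants when $q=2$.

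\emph{Parts 1 and 2.} For the upper bound, use $|\m G_i^\Delta|\le [i]\gauss{n-2}{k-2}$ (choose the point of $Y_i$ contained in $F$) together with the displayed formula for $|\m G_i^\gamma|$, and then apply Lemma~\ref{lem:simple_gauss_bnds}. For the lower bound, count only members of $\m G_i^\Delta$ meeting $X+Y_i$ exactly in $X+P$ for a point $P\in Y_i$. Lemma~\ref{lem:disj_subspaces} with $m=i+1$ and $\ell=2$ gives $[i]\,q^{(k-2)(i-1)}\gauss{n-i-1}{k-2}$ such members. Then $\gauss{n-i-1}{k-2}\ge q^{(k-2)(n-i-k+1)}$ yields the exponent $(k-2)(n-k)$.

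\emph{Parts 3 and 4.} Write $A_i=q^{(k-1)i}\gauss{n-i-1}{k-1}$ and $B_i=q^k\gauss{n-i-1}{k-i}$, so that $|\m G_i|=\gauss{n-1}{k-1}-A_i+B_i$. Lemma~\ref{lem:qbinid} gives
\[
A_{i-1}-A_i=q^{(k-1)(i-1)}\gauss{n-i-1}{k-2},\qquad B_{i-1}-B_i=q^{2k-i+1}\gauss{n-i-1}{k-i+1}.
\]
Hence $|\m G_i|-|\m G_{i-1}|=(A_{i-1}-A_i)-(B_{i-1}-B_i)$. For $i=3$ both terms equal $q^{2k-2}\gauss{n-4}{k-2}$, which gives part~4. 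For $i\ge4$ the exponent $(k-1)(i-1)$ is at least $2k-i+1$, with strict inequality. Moreover $\gauss{n-i-1}{k-2}\ge\gauss{n-i-1}{k-i+1}$, because $k-i+1< k-2\le (n-i-1)/2$ when $n\ge 2k+1$. This gives part~3.
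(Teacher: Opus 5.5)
Your formulas for $|\m G_i^\Delta|$ and $|\m G_i^\gamma|$ match the paper's, as do your arguments for Parts 1, 2 and 5. In Part 5 you also check that $X$ is the most popular $1$-subspace, which the paper only asserts. Your comparison does go through, even with the crude constant of Lemma~\ref{lem:simple_gauss_bnds}: for $q=2$ one gets $\gauss{n-2}{k-2}+|\m G_i^\gamma|\le 5\cdot 2^{(k-2)(n-k)}<[i]\,2^{(k-2)(n-k)}$.

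For Parts 3 and 4 your route is a cleaner variant of the paper's. The paper obtains $|\m G_i^\Delta|-|\m G_{i-1}^\Delta|$ by counting with $Y_{i-1}\subset Y_i$. Its bound $([i]-[i-1])q^{(k-2)(i-1)}\gauss{n-i-1}{k-2}$ equals your exact $A_{i-1}-A_i$, since $[i]-[i-1]=q^{i-1}$. It then compares with the $\gamma$-parts via Lemma~\ref{lem:simple_gauss_bnds}, and proves Part 4 separately by expanding $|\m G_3|$ and matching it with the formula for $|\m G_2|$. Your exact identity $|\m G_i|-|\m G_{i-1}|=q^{(k-1)(i-1)}\gauss{n-i-1}{k-2}-q^{2k-i+1}\gauss{n-i-1}{k-i+1}$ settles both at once. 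Part 4 is simply the case $i=3$, where the two terms coincide.

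One step is wrong as written. You justify $\gauss{n-i-1}{k-2}\ge\gauss{n-i-1}{k-i+1}$ by $k-2\le (n-i-1)/2$. That condition is equivalent to $n\ge 2k+i-3$, so it fails for $i\ge 5$ near the bottom of the range. For example, $n=2k+1$ and $i=5$ give $n-i-1=2k-5<2k-4$.

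The inequality itself is still true, and there are two quick repairs. Put $N=n-i-1$, $a=k-2$ and $b=k-i+1$.
\begin{enumerate}
\item Symmetry and unimodality: you have $b<a$ and $a+b=2k-i-1\le N$, so $a$ lies between $b$ and $N-b$. Since $\gauss{N}{j}=\gauss{N}{N-j}$, and $\gauss{N}{j+1}/\gauss{N}{j}=[N-j]/[j+1]\ge 1$ for $j<N/2$, it follows that $\gauss{N}{a}\ge\gauss{N}{b}$.
\item Crude bounds: $a(N-a)-b(N-b)=(a-b)(N-a-b)=(i-3)(n-2k)\ge 0$. Hence Lemma~\ref{lem:simple_gauss_bnds} gives $\gauss{N}{b}\le 4q^{b(N-b)}\le 4q^{a(N-a)}\le 4\gauss{N}{a}$. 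The factor $q^{k(i-3)}\ge q^4>4$ coming from the exponent gap then absorbs the constant $4$.
\end{enumerate}
With either repair your proof is complete.
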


\begin{proof}
By Lemma~\ref{lem:disj_subspaces} and Lemma~\ref{lem:qbinid},
\begin{align*}
 & |\m G_i^\Delta| = \gauss{n-1}{k-1} - q^{(k-1)i} \gauss{n-i-1}{k-1},\\
 & |\m G_i^\gamma| = q^k \gauss{n-i-1}{k-i} = q^i \left( \gauss{n-i}{k-i} - \gauss{n-i-1}{k-i-1} \right).
\end{align*}

For Part~1, note that  $|\m G_i| \le |\m G_i^\Delta|+|\m G_i^\gamma| \leq [i]\gauss{n-2}{k-2} + q^k \gauss{n-i-1}{k-i}$, and the claimed bound follows from Lemma~\ref{lem:simple_gauss_bnds}.

For Part~2, count $k$-subspaces $F\in \m G_i^\Delta$ such that $\dim(F\cap Y_i)=1$. There are $[i]$ choices for the $2$-subspace $F\cap (X+Y_i)$, and Lemma~\ref{lem:disj_subspaces} (with $m=i+1$, $\ell=2$) gives $$|\m G_i| \ge |\m G^\Delta_i|\ge [i] q^{(k-2)(i-1)} \gauss{n-i-1}{k-2}.$$ The second inequality follows from Lemma~\ref{lem:simple_gauss_bnds}.

For Part~3, assume $Y_{i-1}\subset Y_i$. Counting as in Part~2, we have  $$|\m G_i^\Delta|-|\m G_{i-1}^\Delta|\ge ([i]-[i-1]) q^{(k-2)(i-1)} \gauss{n-i-1}{k-2},$$ while, by Lemma~\ref{lem:qbinid}, $|\m G_{i-1}^\gamma|-|\m G^\gamma_i|\le q^k\left(\gauss{n-i}{k-i+1} - \gauss{n-i-1}{k-i}\right) = q^{2k-i+1} \gauss{n-i}{k-i+1}$. Comparing these inequalities using Lemma~\ref{lem:simple_gauss_bnds} shows that the former dominates for $i\ge 4$, and hence $|\m G_i|>|\m G_{i-1}|$.

For Part~4, by Lemma~\ref{lem:qbinid},
\begin{align*}
 |\m G_3| &= \gauss{n-1}{k-1} - q^{3(k-1)} \gauss{n-4}{k-1} + q^k \gauss{n-4}{k-3}\\
 &= (q^k+q^{k-1}+q^{k-2}) \gauss{n-3}{k-2} + \gauss{n-3}{k-3}.
\end{align*}
For $|\m G_2|$, by Equation~\eqref{eq:sizeGtwo}, $|\m G_2| = |\m G_3|$.

For Part~5, note that $\gamma(\m G_i) = |\m G_i^\gamma|$.
\end{proof}

In the following theorem, we show that the families $\m G_i^\Delta$ and $\m G_i^\gamma$ are extremal, in a certain natural sense, among cross-intersecting families.

\begin{Theorem}\label{thm:cross}
    Assume that $\m A$, $\m B$ are cross-intersecting families in $\m V_n^k$.
    Moreover, suppose that there exists a $1$-dimensional subspace $X$ such that all subspaces in $\m A$ contain $X$, while all subspaces in $\m B$ avoid $X$. Assume further that for some integer $i$, $3\le i\le k$, we have
    \[
    q^{k} \gauss{n-i-1}{k-i} \le |\m B|\le \delta q^{k} \gauss{n-i}{k-i+1},
    \]
    where $\delta = q^{-3}$ for $i=3$ and $q \leq 5$,
    $\delta = 2q^{-1}$ for $i=3$ and $q \geq 7$, and $\delta = 1$ otherwise.
    Suppose that one of the following conditions holds:
    \begin{enumerate}
     \item $n=2k+1$, $k-i \leq (q-1) \ln q - 9$, $k \geq 4$;
     \item $q \geq 4$, $n \geq 2k+2$, $k \geq 4$;
     \item $q = 3$, $n \geq 2k+4$, $k \geq 4$;
     \item $q = 2$, $n \geq 2k+13$, $n \geq \frac52 k+5$, $k \geq 4$.
    \end{enumerate}
    Then $|\m A|+|\m B| \leq |\m G_i|$, with equality only if $\m A$ and $\m B$ are isomorphic to $\m G_i^\Delta$ and $\m G_i^\gamma$, respectively.
\end{Theorem}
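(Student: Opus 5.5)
Our plan is to pass to the quotient by $X$ and apply the cross-intersecting peeling of Lemma~\ref{lemsimpeel2} to $\m B$, using $\m A$ as the partner family. Every $A\in\m A$ contains $X$, so $A/X$ is a $(k-1)$-subspace of $V/X\cong V(n-1,q)$. Each $B\in\m B$ avoids $X$, so $(B+X)/X$ is a $k$-subspace of $V/X$. Since $X\subset A$ and $X\not\subset B$, the condition $\dim(A\cap B)\ge 1$ is equivalent to $\dim\big((A/X)\cap((B+X)/X)\big)\ge 1$. The map $B\mapsto (B+X)/X$ is exactly $q^k$-to-one onto its image (cf.\ the remark after the definition of $\m G_i^\gamma$). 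We therefore replace $\m B$ by $\m B'\subset \m V^k_{n-1}$ with $|\m B|\le q^k|\m B'|$ and $\m A'\subset \m V^{k-1}_{n-1}$ with $|\m A'|=|\m A|$; the pair $\m A',\m B'$ is cross-intersecting, and the problem becomes: maximize $|\m A'|+q^k|\m B'|$.

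\textbf{Step 1 (peeling $\m B'$).} Run the procedure of Lemma~\ref{lemsimpeel2} on $\m B'$ with $t=1$, writing $\m B'=\m B'[\m C_{j}]\cup\bigcup_{\ell>j}\m B'[\m W_\ell]$. Part~3 gives $|\m W_\ell|\le [k-1][k-1]^{\ell-1}$, hence $|\m B'[\m W_\ell]|\lesssim q^{(\ell)(k-2)}\gauss{n-1-\ell}{k-\ell}\approx q^{\ell(k-2)+(k-\ell)(n-k-1)}$. For $\ell<i$ this is much larger than $\gauss{n-i-1}{k-i}$, so the peeling is informative. The lower bound $|\m B'|\ge\gauss{n-i-1}{k-i}$ and the upper bound $|\m B'|\le\delta\gauss{n-i}{k-i+1}$ together say that $\m B'$ ``lives'' at dimension about $i$. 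The key quantity is the \emph{minimal dimension} $m$ of a subspace $W$ occurring in the final peeled family $\m C$ of $\m B'$ (a family whose generating spaces all cross-intersect $\m A'$). From the upper bound on $|\m B'|$, $\m C$ cannot contain a subspace of dimension $\le i-1$: one such space alone would already allow $\m B'$ to be as large as $\gauss{n-i}{k-i+1}$. Conversely, the peeling cost estimates show that members of dimension $>i$ contribute too little to reach the lower bound unless $\m A'$ is tiny.

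\textbf{Step 2 (bounding $\m A'$).} Every $A'\in\m A'$ must meet every $W\in\m C$ (adding $W$ preserved cross-intersection). If $\m C$ contains some $W$ of dimension $m$, then $|\m A'|\le|\{A'\in\m V^{k-1}_{n-1}:A'\cap W\neq 0\}|=\gauss{n-1}{k-1}-q^{(k-1)m}\gauss{n-1-m}{k-1}$. This equals $|\m G_m^\Delta|$, which is decreasing in $m$, while the corresponding cost for $\m B'$ is $|\m B'|\le\gauss{n-1-m}{k-m}$ per generator. For several generators of dimension $m$, the bound on $\m A'$ drops further: $\m A'$ must meet two $m$-spaces $W_1,W_2$, which loses roughly a factor $q^{-(n-k)}$ relative to meeting one. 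The gain for $\m B'$ from a second generator is only an additive $\gauss{n-1-m}{k-m}$. We will compare the two via Lemma~\ref{lem:simple_gauss_bnds} to show that an optimal configuration uses a single generator $W$ of dimension exactly $i$. Generators of dimension $<i$ are excluded by the upper bound on $|\m B|$; generators of dimension $>i$, which come from the peeling remainder, are bounded by the tail sums of Lemma~\ref{lem:sum_bnd} and make $|\m B'|$ fall below $\gauss{n-i-1}{k-i}$ unless compensated.

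\textbf{Step 3 (finishing).} With $\m C=\{W\}$ and $\dim W=i$, we get $\m B'\subset\{B':W\subset B'\}$ and $\m A'$ consists of $(k-1)$-spaces meeting $W$. Lifting $W$ back to $Y_i$ recovers exactly $\m G_i^\Delta,\m G_i^\gamma$, which gives $|\m A|+|\m B|\le|\m G_i|$. Equality forces both families to be full, hence the isomorphism.

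\textbf{Main obstacle.} The hard part is the mixed case: $\m C$ has generators of dimension $i$ together with a nonnegligible remainder of higher-dimensional generators, or several dimension-$i$ generators. We then need the loss in $|\m A'|$ caused by each extra generator to dominate the gain $q^k(\cdots)$ in $|\m B|$. This is where the numerical conditions (1)--(4), and in particular the factor $\delta$ at $i=3$ (where $\m G_2$ and $\m G_3$ tie), are consumed. We would first attempt a uniform inequality of the form $\text{loss}_{\m A}\ge q^{n-2k-1}\cdot\text{gain}_{\m B}$, and fall back on case-by-case estimates for small $q$.
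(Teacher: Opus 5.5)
Your reduction to $V/X$ matches the paper, but Step~1 cannot give you the information you need. When you peel $\m B'$ against $\m A'$, Lemma~\ref{lemsimpeel2} sets the spreadness threshold by the uniformity of the partner family, here $k-1$. So layer $\ell$ may leave up to $[k-1]^{\ell}\gauss{n-\ell-1}{k-\ell}$ members unabsorbed. Already at $\ell=k$ this is about $q^{k(k-2)}$. Your only lower bound is $|\m B'|\ge\gauss{n-i-1}{k-i}\approx q^{(k-i)(n-k-1)}$, which is smaller than that for $n\in\{2k+1,2k+2\}$ because $i\ge 3$. At the layers $\ell\approx (k+i)/2$ you would actually need, the comparison of $(k+i)(k-2)$ with $(k-i)(n-k-1)$ goes the wrong way roughly whenever $n\le 2k+2i-1$. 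So the discarded part may be all of $\m B'$, and the peeled family tells you nothing. Lemma~\ref{lem:sum_bnd} does not rescue this: it is the one-family bound, whose threshold is the family's own dimension.

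Yet some peeling of $\m B'$ is unavoidable. The members of $\m A'$ avoiding the common intersection of $\m B'$ are counted by fixing a point in one member and a point in another. This gives $[d]^2\gauss{n-3}{k-3}$, where $d$ is the dimension of those members, and it is small against $|\m G_i|\approx [i]\gauss{n-2}{k-2}$ only if $d$ is at most about $(n-k+i)/2$. The missing idea is the paper's two-stage peeling:
\begin{enumerate}
\item First peel $\m A(X)$ against $\tilde{\m B}$ (threshold $[k]$) down to dimension $\lfloor (k-i)/2\rfloor+1$, discarding only $\beta_1|\m G_i|$. This is where $k-i\le (q-1)\ln q-9$ is used for $n=2k+1$.
\item This lowers the threshold for $\tilde{\m B}$ to $[\lfloor (k-i)/2\rfloor+1]$. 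Only then can $\tilde{\m B}$ be peeled to dimension $\lfloor (k+i)/2\rfloor$ while losing less than a $(1-q^{-1})$-fraction of it.
\end{enumerate}

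Your structural picture is also inverted. The number $\gauss{n-1}{k-1}-q^{(k-1)m}\gauss{n-1-m}{k-1}$ of $(k-1)$-spaces meeting an $m$-space is increasing in $m$, not decreasing. Since peeling has no size condition, a generator of dimension $<i$ says nothing about $|\m B'|$, so the upper bound on $|\m B|$ cannot exclude it; such a generator hurts $\m A'$, not $\m B'$. In the paper the upper bound (with $\delta$) only makes $|\m B|$ a negligible fraction of $|\m G_i|$, so all the weight must come from $\m A$. The lower bound on $|\m B|$ is what constrains $\m A$.

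Nor does the argument ever reach a single generator of dimension $i$, as your Step~3 presumes. Instead, assuming $|\m A|+|\m B|\ge|\m G_i|$ and that $\tilde{\m B}$ is not a full $i$-star, it shows the following.
\begin{enumerate}
\item By a direct count, no $i$-space lies in more than a $q^{2k-n}$-fraction of $\tilde{\m B}$.
\item The second peeling keeps more than a $q^{-1}\ge q^{2k-n}$ fraction, so the common intersection $Y$ of the peeled $\tilde{\m B}$ has dimension at most $i-1$.
\item The surviving members of $\m A(X)$ meeting $Y$ number at most $[i-1]\gauss{n-2}{k-2}\le q^{-1}|\m G_i|$. Those avoiding $Y$ number at most $[(k+i)/2]^2\gauss{n-3}{k-3}$.
\end{enumerate}
Adding the discarded part of $\m A$ and $|\m B|$ gives a coefficient strictly below $1$. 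Without the two-stage peeling and the non-triviality step (1), the ``main obstacle'' you identify cannot be reached from the estimates you set up.
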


\begin{proof}
Throughout much of the argument, we will work in the quotient of $X$.
For this, we define $\tilde{\m B} = \{ (X+B)/X: B \in \m B\}$. By Lemma~\ref{lem:disj_subspaces},
\[
 \gauss{n-i-1}{k-i} \le |\tilde{\m B}|.
\]
First note that if $\tilde{\m B}$ consists of all $k$-subspaces that contain some fixed $i$-dimensional subspace $Y'$, then we are done. Indeed, the lower bound on $|\tilde{\m B}|$ is precisely the number of such subspaces, and then cross-intersection forces $\m A(X)$ to consist of all $k$-subspaces meeting $Y'$, so that $(\m A,\m B)$ is isomorphic to $(\m G_i^\Delta,\m G_i^\gamma)$ (with $Y$ such that $(X+Y)/X = Y'$).

The proof uses the same ideas as the proof of Theorem~\ref{thmstruc}, but requires substantially more care. We first show that no $i$-subspace lies in too many members of $\tilde{\m B}$. This allows us to ensure that, when we run the peeling procedure, the simplified $\tilde{\m B}$ remains non-trivial. Then we run a peeling procedure with a carefully chosen set of parameters to ensure small remainders and non-triviality of $\tilde{\m B}.$

\medskip

\noindent
\textbf{Step 1: No $i$-subspace is in many elements of $\tilde{\m B}$.}

Suppose that there exists an $i$-dimensional subspace $Y$ contained in at least an $\alpha:=q^{2k-n}$-fraction of the members of $\tilde{\m B}$. We claim that then every subspace in $\m A(X)$ must intersect $Y$.

Indeed, suppose that some $A\in \m A(X)$ is disjoint from $Y$. Then at most $[k-1]\gauss{n-i-2}{k-i-1}$ members of $\tilde{\m B}$ containing $Y$ can intersect $A$. On the other hand,
\begin{align*}
|\tilde{\m B}[Y]|-[k-1]\gauss{n-i-2}{k-i-1}
&\ge q^{2k-n} \gauss{n-i-1}{k-i} - [k-1]\gauss{n-i-2}{k-i-1}\\
&=\Big(q^{2k-n} - \frac{[k-1][k-i]}{[n-i-1]}\Big)\gauss{n-i-1}{k-i}\\
&>\Big(q^{2k-n} - q^{2k-n}\Big)\gauss{n-i-1}{k-i} = 0,
\end{align*}
a contradiction with $\m A(X)$ and $\tilde{\m B}$ being cross-intersecting. Thus every member of $\m A$ must intersect $Y$.

Now let $Y':=\bigcap \tilde{\m B}$. Since we excluded the case when all members of $\tilde{\m B}$ contain an $i$-subspace, we may assume that $\dim(Y'\cap Y)<i$. Put $s:=\dim(Y'\cap Y)$. Then every member of $\m A$ either intersects $Y'\cap Y$, or intersects $Y\setminus Y'$ and, additionally, intersects some member of $\tilde{\m B}$. Consequently,
\begin{align*}
|\m A| &= |\m A(X)|
\le [s]\gauss{n-2}{k-2}+[i][k]\gauss{n-3}{k-3}.
\end{align*}
Also,
\[
|\m B| \le \delta \Big(1+\tfrac{q+1}{q^2-q-1}\Big) q^{(k-i+1)(n-k-1)+k}
= \delta \Big(1+\tfrac{q+1}{q^2-q-1}\Big) q^{(k-i+1)(n-k)+(i-1)}.
\]
By Lemma~\ref{lem:bndsfranklfams},
\[
|\m G_i| \ge [i]q^{(k-2)(i-1)}\gauss{n-i-1}{k-2}
\ge [i] q^{(k-2)(n-k)}.
\]
Therefore,
\begin{align}
1 \le \frac{|\m A|+|\m B|}{|\m G_i|}
&\le
\frac{[s]\gauss{n-2}{k-2}+[i][k]\gauss{n-3}{k-3}+|\m B|}{[i] q^{(k-2)(i-1)}\gauss{n-i-1}{k-2}} \notag\\
&\le \frac{[i-1]}{[i] q^{(k-2)(i-1)}} \prod_{j=1}^{i-1}\frac{[n-j-1]}{[n-k-j+2]} + R \label{eq:ratioA_corrected}\\
&\le q^{-1}\Big(\frac{[n-i]}{q^{k-2}[n-k-i+3]}\Big)^{i-1} + R \le q^{-1}+R, \notag
\end{align}
where
\begin{align*}
 R &= \frac{[i][k]\gauss{n-3}{k-3}+|\m B|}{[i] q^{(k-2)(i-1)}\gauss{n-i-1}{k-2}}
 \leq \frac{[i][k]\gauss{n-3}{k-3}+|\m B|}{[i]q^{(k-2)(n-k)}} \\
 &\leq q^{2k-n-1}(1+\tfrac{q+1}{q^2-q-1})(\tfrac{q}{q-1}) + \delta q^{(3-i)(n-k)+(i-1)}(1+\tfrac{q+1}{q^2-q-1})/[i].
\end{align*}
Inequality \eqref{eq:ratioA_corrected} is a contradiction whenever the right-hand side is less than $1$. This is readily verified in our assumptions.
Hence no $i$-subspace is contained in more than a $q^{2k-n}$-fraction of the members of $\tilde{\m B}$.

\medskip

\noindent
\textbf{Step 2: Simplifying $\m A(X)$.}

We now turn to the simplification argument. We apply the cross-intersecting peeling procedure, first to $\m A(X)$ and then to $\tilde{\m B}$, while controlling the total size of the discarded parts. Let $\m C_i^a$ and $\m C_i^b$ denote the corresponding simplified families. Starting from $\m C_{k-1}^a=\m A(X)$ and $\m C_k^b=\tilde{\m B}$, we perform the sequence
\[
\m C_{k-1}^a\to \m C_{k-2}^a\to \cdots \to \m C_{\lfloor (k-i)/2\rfloor+1}^a,
\qquad
\m C_k^b\to \m C_{k-1}^b\to \cdots \to \m C_{\lfloor (k+i)/2\rfloor}^b.
\]

For the simplification of $\m A(X)$, define, for any real $j$, $1 \leq j \leq k-1$,
\[
a_j := [k]^j\gauss{n-j-1}{k-j-1}.
\]
Then, by Lemma~\ref{lemsimpeel2} part~3, in the $(a,j)$-steps for $j=k-1,k-2,\ldots,\lfloor (k-i)/2\rfloor+2$ we have
\[
|\m V_{n-1}^{k-1}[\m W_j^a]|\le a_j.
\]
Moreover, using $\frac{x+1}{y+1} < \frac{x}{y}$ for $x > y$,
\[
\frac{a_{j-1}}{a_j}
= \frac{[n-j]}{[k][k-j]}
> \frac{q^{n-k}}{[k]}
> \frac12 q^{n-2k+1}\ge q.
\]
Similarly, $a_{j-\frac12} \geq a_{j}$.
Bounding $\sum a_j$ by the sum of a geometric progression, we get
\begin{align*}
\sum a_j
&\le \frac{1}{1-1/q}\,a_{ \lfloor (k-i)/2 \rfloor + 2} \leq \frac{1}{1-1/q}\,a_{ (k-i+3)/2}\\
&= (\tfrac{q}{q-1})[k]^{ (k-i+3)/2}\gauss{n- (k-i+5)/2}{k- (k-i+5)/2}\\
 &< (\tfrac{q}{q-1})^{ (k-i+9)/2} q^{((k-i+3)/2)(k-1) + (k- (k-i+5)/2)(n-k)}\\
&< (\tfrac{q}{q-1})^{(k-i+9)/2} q^{(k-i)/2\cdot(2k-n-1)+(k-\frac52)(n-k) + \frac32 (k-1)}.
\end{align*}
Hence $\sum a_j<\beta_1|\m G_i|$ provided
\[
\Big(\tfrac{q}{q-1}\Big)^{(k-i+9)/2} q^{(k-i)/2\cdot(2k-n-1)+\frac32 (k-1)}/q^{\frac12 (n-k)+(i-1)} \le \beta_1.
\]
For $n \geq 2k+2$, the left-hand side is increasing in $i$, so we only need to check $i=k$.
We get the following:
\begin{align*}
 \begin{cases}
  (\tfrac{q}{q-1})^{(k-i+9)/2} q^{-1} & \text{ if } n=2k+1, \\
  (\tfrac{q}{q-1})^{9/2} q^{(-n+2k-1)/2} & \text{ if } n \geq 2k+2.
 \end{cases}
\end{align*}
This is satisfied for $\beta_1=0.34$ if $k-i \le (q-1) \ln q - 9$;
for $\beta_1=0.46$ for $n\geq 2k+2$ and $q \geq 4$;
for $\beta_1=0.11$ for $n\geq 2k+2$ and $q \geq 7$;
for $\beta_1=0.4$ for $n \geq 2k+4$ and $q=3$;
for $\beta_1=0.18$ for $n \geq 2k+13$ and $q=2$.

\medskip

\noindent
\textbf{Step 3: Simplifying $\tilde{\m B}$.}

For the simplification of $\tilde{\m B}$, define, for any real number $j$, $1 \leq j \leq k$,
\[
b_j := [\lfloor (k-i)/2\rfloor+1]^j\gauss{n-j-1}{k-j}.
\]
Then, by Lemma~\ref{lemsimpeel2} part~3, in the $(b,j)$-steps for $j=k,k-1,\ldots,\lfloor (k+i)/2\rfloor+1$ we have
\[
|\m V_n^k[\m W_j^b]|\le b_j.
\]
If $i=k$, then there is no simplification to perform. Otherwise, using $\frac{x+1}{y+1} < \frac{x}{y}$ for $x > y$,
\[
\frac{b_j}{b_{j+1}}
= \frac{[n-j-1]}{[\lfloor (k-i)/2\rfloor+1][k-j]}
> \frac{q^{n-k-1}}{[\lfloor (k-i)/2\rfloor+1]}
> \frac12 q^{n-k-1-\lfloor (k-i)/2\rfloor}
> q.
\]
Similarly, $b_{j} \geq b_{j+\frac12}$.
Hence, we get
\begin{align*}
\sum b_j
&\le \frac{1}{1-1/q}\,b_{\lfloor (k+i)/2\rfloor+1} \leq \frac{1}{1-1/q}\,b_{(k+i+1)/2}\\
&= (\tfrac{q}{q-1})[\lfloor (k-i)/2\rfloor+1]^{(k+i+1)/2}\gauss{n-(k+i+3)/2}{k- (k+i+1)/2}\\
    &< (\tfrac{q}{q-1})^{(k+i+7)/2 } q^{\lfloor (k-i)/2\rfloor \cdot (k+i+1)/2 + (k-(k+i+1)/2 ) (n-k-1)}\\
&\leq (\tfrac{q}{q-1})^{(k+i+7)/2} q^{(k-i)/2\cdot(k+i+1)/2 + (k-(k+i+1)/2)(n-k-1)}.
\end{align*}
Therefore $\sum b_j<(1-q^{-1}) q^{(k-i)(n-k-1)}\le (1-q^{-1}) |\tilde{\m B}|$ provided
\[
\Big(\tfrac{q}{q-1}\Big)^{(k+i+7)/2} q^{(k-i)/2\cdot(k+i+1)/2 - ((k-i+1)/2)(n-k-1)}\le 1-q^{-1}.
\]
Since the left-hand side is increasing in $i$, it is enough to check the inequality for $i=k-1$, which gives
\[
\Big(\tfrac{q}{q-1}\Big)^{k+3}q^{-n+\frac32 k +1} < 1-q^{-1}.
\]
This holds for $n\ge 2k+1$ and $q\ge 4$; $n\geq 2k+2$ and $q=3$; $n\ge \frac52 k+5$ and $q=2$.

In short, the bound on the remainder above implies that we threw away at most $1-q^{-1}$ of subspaces of $\tilde{\m B}$ while doing the peeling procedure. Together with the fact that no $i$-subspace is contained in more than a $q^{2k-n}$-fraction of members of $\tilde{\m B}$ (which was shown in the beginning of the proof), this implies that the resulting family $\m C^b_{\lfloor (k+i)/2\rfloor}$ is still non-trivial, i.e., no $i$-subspace is contained in all of its members.

Set
\[
\m A':=\m A(X)[\m C_{\lfloor (k-i)/2\rfloor+1}^a],
\qquad
\m C^b:=\m C_{\lfloor (k+i)/2\rfloor}^b.
\]
By the definition of peeling, $\m A'$ and $\m C^b$ remain cross-intersecting. Put $Y:=\bigcap \m C^b$.
By the argument above, $\dim Y=s\le i-1$. The number of members of $\m A'$ that intersect $Y$ is at most
\[
[s]\gauss{n-2}{k-2},
\]
and, exactly as in \eqref{eq:ratioA_corrected}, this is at most
\[
q^{-1}|\m G_i|.
\]

\medskip

\noindent
\textbf{Step 4: Bounding $|\m A| + |\m B|$.}

It remains to bound the members of $\m A'$ that avoid $Y$ but still cross-intersect $\m C^b$. Take any $F\in \m C^b$. Since $\dim F\le \frac{k+i}{2}-1$, any $G\in \m A'$ avoiding $Y$ must contain some $1$-subspace $S\subset F$. As $S$ is not contained in the common intersection of $\m C^b$, there exists $F'\in \m C^b$ avoiding $S$. Hence $G$ must also contain some $1$-subspace of $F'$. Therefore the number of such $G$ is at most
\[
\Big[\frac{k+i}{2}\Big]^2\gauss{n-3}{k-3}
\le \Big(\tfrac{q}{q-1}\Big)^2\Big(1+\tfrac{q+1}{q^2-q-1}\Big) q^{(k-3)(n-k)+k+i-2}.
\]
Consequently,
\[
|\{G\in \m A': G\cap Y=\{0\}\}|
\le \frac{(\tfrac{q}{q-1})^2(1+\tfrac{q+1}{q^2-q-1})}{[i]} q^{(2k-n-1)+(i-1)} |\m G_i|
=: \beta_3 |\m G_i|,
\]
where we may take $\beta_3=0.03$ for $n=2k+1$ and $q\ge 7$; $\beta_3=0.11$ for $n\ge 2k+2$ and $q\ge 3$; $\beta_3=0.01$ for $n\ge 2k+10$ and $q\ge 2$.

Finally,
\[
\frac{|\m B|}{|\m G_i|}
\le \delta \frac{1+\tfrac{q+1}{q^2-q-1}}{[i]} q^{(3-i)(n-k)+(i-1)}
=: \beta_4,
\]
where we may take $\beta_4=0.01$ for $i \geq 4$; $\beta_4=2/7$ for $i=3$ and $q=2$;
$\beta_4 = 0.05$ for $i=3$ and $q \in \{ 3, 4, 5 \}$; $\beta_4 = 0.3$ for $i=3$ and $q \geq 7$.

Collecting the discarded part from $\m A$, the members of $\m A'$ intersecting $Y$, the members of $\m A'$ avoiding $Y$, and finally the contribution of $\m B$, we obtain
\[
|\m A|+|\m B|
\le (\beta_1+q^{-1}+\beta_3+\beta_4)|\m G_i|.
\]
In all admissible ranges, this coefficient is strictly smaller than $1$.
This proves the theorem.
\end{proof}

Now we are ready to prove our Frankl degree-type theorem for vector spaces.

\begin{proof}[Proof of Theorem~\ref{thm:frankldegreetheorem}]
   We start by treating the case $i=3$, $q\ge 7,k\ge 4, n\ge 2k+2$. Then we will treat all cases with $q\ge 3$ and $i\ge 4.$ We will treat the case $q=2$ separately at the end.
    The case $k=3$ follows from Appendix~\ref{sec:planes}.

Assume that $i=3$, $q\ge 7,k\ge 4, n \ge 2k+2$. Take a $1$-subspace $X$ such that $\m F[X]$ is the largest. If $|\ff \setminus \ff[X]| > 2 q^{k-1} \gauss{n-3}{k-2}$,
then the stability part of Theorem~\ref{thmlargestdiv} implies that $\ff$ is isomorphic to $\m G_2$. If $|\ff \setminus \ff[X]| < 2 q^{k-1} \gauss{n-3}{k-2}=\delta q^{k}\gauss{n-3}{k-2},$ then Theorem~\ref{thm:cross} is applicable and we conclude that $\ff$ must be isomorphic to  $\m G_3.$

In what follows, we assume that $i\ge 4.$


\medskip

\noindent
\textbf{Step 1: $\gamma(\ff)$ is small compared to $|\ff|$.}
    We want to bound $\gamma(\ff)$ and show that there is a most popular $1$-dimensional
    subspace $X$. More specifically, we will show $\gamma(\ff) < \frac{41}{100} |\ff|$
    for $q \geq 3$ (and, later, $\gamma(\ff) < \frac13 |\ff|$ for $q=2$).
    We distinguish the cases $n=2k+1$ and $n \geq 2k+2$.

    If $n=2k+1$, then we apply Theorem~\ref{thmdiv} with $\alpha=3/5$ and find
    \begin{align*}
     |\ff| &\leq 5 \left(\frac{q}{q-1}\right)^{k} \left( 1 + \frac{q+1}{q^2-q-1} \right)^2 q^{k(k-1)}.
    \end{align*}
    Comparing this to the lower bound on $|\m G_i| \leq |\ff|$ in Lemma~\ref{lem:bndsfranklfams}
    gives the condition
    \[
     [i] > 5q^2 \left(\frac{q}{q-1}\right)^{k} \left( 1 + \frac{q+1}{q^2-q-1} \right)^2.
    \]
    This is implied by $k \leq \frac12 q \ln(q) - 9$.

    For any case $n \geq 2k+2$, the choice of conditions
    is such that we can apply Theorem~\ref{thmlargestdiv}.
    Here we want to show
    \[
     \gamma(\ff) < \frac{41}{100} |\ff|.
    \]
    Equation~\eqref{bounddiv}, with Lemma~\ref{lem:simple_gauss_bnds},
    implies that $\gamma(\ff) \leq q^{(k-2)(n-k)+2} \allowbreak \left( 1 + \frac{q+1}{q^2-q-1} \right)$.
    By Lemma~\ref{lem:bndsfranklfams}, $|\m F| \geq |\m G_i| \geq [i] q^{(k-2)(n-k)}$. Thus,
    we obtain the condition
    \[
        q^2 \left( 1 + \frac{q+1}{q^2-q-1} \right) < \frac{41}{100} (1+q^{-1}+q^{-2}+q^{-3}) q^{i-1}.
    \]
    It is straightforward to check that this holds for all $q \geq 3$.
    Thus, there is a most popular $1$-dimensional subspace $X$.

    \medskip

\noindent
\textbf{Step 2: Apply Theorem~\ref{thm:cross}.}
    The theorem follows from Theorem~\ref{thm:cross}, applied to $\ff[X]$ and $\ff\setminus \ff[X]$:

Clearly, $|\ff \setminus \ff[X]| = \gamma(\ff) \geq q^{k} \gauss{n-i-1}{k-i}$.
If $|\ff \setminus \ff[X]| \leq q^{k-3} \gauss{n-3}{k-2}$, then we
can apply Theorem~\ref{thm:cross} and see that $\ff$ is isomorphic to some family
$\m G_j$ with $j \geq i$. By Lemma~\ref{lem:bndsfranklfams},
$|\m G_i| > |\m G_{i-1}|$ and we are done.

It remains the case that $|\ff \setminus \ff[X]| > q^{k-3} \gauss{n-3}{k-2}$. Throw away some of $\ff \setminus \ff[X]$ such that Theorem~\ref{thm:cross}
becomes applicable with $i=3$. Then
\[
 \tfrac{59}{100} |\ff| \leq |\ff[X]| \leq [3] \gauss{n-2}{k-2} \leq (1+q^{-1}+q^{-2}) (1+\tfrac{q+1}{q^2-q-1}) q^{(k-2)(n-k)+2}.
\]
By Lemma~\ref{lem:bndsfranklfams},
\[
 (1+q^{-1}+q^{-2}+q^{-3}) q^{(k-2)(n-k)+i-1} \leq |\ff|.
\]
This is a contradiction for all $q \geq 3$.

\medskip

\noindent
\textbf{Step 3: The case $q=2$.}
Now let us cover the differences for $q=2$.
By Lemma~\ref{lem:simple_gauss_bnds}, $|\ff| \geq |\m G_i| \geq 3.2 [i] q^{(k-2)(n-k)}$.
Thus, to show $\gamma(\ff) < \frac13 |\ff|$, we need to show
\[
    q^2 \left( 1 + \frac{q+1}{q^2-q-1} \right) \leq \frac13 \cdot 3.2 \cdot (1+q^{-1}+q^{-2}+q^{-3}) q^{i-1}
\]
which is true for $q=2$ and all $i \geq 4$.
Then, similar to before, it is easy to check that the following does not hold:
\begin{align*}
 & 3.2 \cdot (1+q^{-1}+q^{-2}+q^{-3}) q^{(k-2)(n-k)+i-1} \leq |\ff| \\
 & \leq  \tfrac32 |\ff[X]| \leq (1+q^{-1}+q^{-2}) (1+\tfrac{q+1}{q^2-q-1}) q^{(k-2)(n-k)+2}.
\end{align*}
This covers the case $q=2$.

Let us conclude by reviewing the conditions.
For $n=2k+1$, we used $k \leq \frac12 q \ln q - 9$ in Step 1.
All other constraints follow from Theorem~\ref{thmlargestdiv} or Theorem~\ref{thm:cross}.
This completes the proof of the theorem.
\end{proof}

\section{Future work}\label{sec:con}
Below we list several directions for future research.

\subsection*{\texorpdfstring{The case $n=2k$}{The case n=2k}}

For intersecting families of vector spaces, the case $n=2k$ is arguably the most interesting.
Unlike the setting of sets, it is still feasible to characterize large intersecting families.
However, in contrast to the regime $n \geq 2k+1$, extremal families in this case naturally come in dual pairs.

For instance, for $t < k/2$, there are two types of $t$-intersecting families of size $\gauss{n-t}{k-t} = \gauss{2k-t}{k-t}$:
\begin{itemize}
 \item $\{ F \in \m V^k_{2k}: X \subset F \}$ for some fixed $X \in \m V^t_{2k}$,
 \item $\{ F \in \m V^k_{2k}: F \subset X \}$ for some fixed $X \in \m V^{2k-t}_{2k}$.
\end{itemize}
Even in the $1$-intersecting case, only partial Hilton--Milner-type results are known, cf. the Introduction and~\cite{BBS,Ihr}.

It would be desirable to develop a version of the peeling procedure for subspaces, and of Theorem~\ref{thmstruc}, that also applies in the regime $n=2k$. Such a result could also shed light on borderline cases such as $n=2k+1$, where the currently known bounds on $q$ are somewhat unsatisfactory.

\subsection*{Affine spaces}

Intersecting families in affine spaces have been studied, for example, in~\cite{CLWZ,DhaeseleerHM,GLM} in the context of Hilton--Milner-type results.
There are two standard models of an $n$-dimensional affine space $AG(n, q)$.
In one model, we identify $AG(n, q)$ with $V(n, q)$, in which case a $k$-subspace has the form $M+v$, where $M$ is a $(k-1)$-subspace of $V(n, q)$.
In the second model, we identify $AG(n, q)$ with $V(n+1, q)$ minus a fixed hyperplane $H$, so that the $k$-subspaces of $AG(n, q)$ correspond to the $k$-subspaces of $V(n+1, q)$ not contained in $H$.
From this perspective, it is clear that Theorem~\ref{thmstruc} extends to affine spaces as well.

It would be interesting to determine whether there exist problems on intersecting families for which affine spaces exhibit behavior genuinely different from that of vector spaces.

\subsection*{Bilinear forms}

Let $n \geq 2k$.
Two $k \times (n-k)$ bilinear forms $x, y$ over the field with $q$ elements are called \textit{intersecting} if $x-y$ does not have full rank.
Bilinear forms can also be realized as a subset of $V(n, q)$ as follows:
fix an $(n-k)$-subspace $M$ and consider all $q^{k(n-k)}$ $k$-subspaces disjoint from $M$.
Each such $k$-subspace $X$ corresponds to a bilinear form $x$, and for two such subspaces $X, Y$ with corresponding forms $x, y$, we have $\dim(X \cap Y) = i$ if and only if $x-y$ has rank $k-i$.
See \S9.5A~\cite{BCN} for an explicit description of this correspondence.

The largest intersecting families of bilinear forms were determined by Huang~\cite{Huang}, and a Hilton--Milner-type result was recently obtained by Cao et al.~\cite{CLW}.
Given the close connection to intersecting families of vector spaces, Theorem~\ref{thmstruc} applies in this setting as well.
As for affine space, it would be interesting to identify whether there are scenarios that distinguish intersecting families of bilinear forms from those of subspaces.
In particular, hypercontractivity exists for bilinear forms, see~\cite{EKL}, so it is the natural setting for this machinery.

\section*{Acknowledgements} This research was supported by National Key R\&D Program of China under grant number 2025YFA1017700.

\appendix

\section{Large families for \texorpdfstring{$k=3$}{k=3}} \label{sec:planes}

Later, for numerical estimates, it is convenient to exclude small $k$.
In this section we will list all large intersecting families for $k=2,3$.

For $k=2$, there are only two maximal intersecting families:
\begin{enumerate}
 \item All $2$-subspaces through a fixed $1$-subspace. Size: $[n-1]$.
 \item All $2$-subspaces in a fixed $3$-subspace. Size: $[3]$.
\end{enumerate}

For $n \geq 6$ and $k=3$,
De Boeck gives a complete list of (maximal) intersecting families $\m F$
of $3$-subspaces of size at least $3q^4+3q^3+2q^2+q+1$ in~\cite{DeB}.
In the following we give a concise version of that list.
See~\cite{DeB} for details.
For the sake of brevity, we write ``$F:$'' instead
of ``$F \in \m V^3_n:$''
in the definition of $\m F$.
\begin{enumerate}
 \item Let $P \in \m V^1_n$.
%
%
 Put $\m F = \{ F: P \subset F \}$.

 \smallskip

 Size: $\gauss{n-1}{2}$.
 \item Let $P \in \m V^1_n$, $Y \in \m V^3_n$, $\dim(P \cap Y) = 0$.
%
%
 Put $\m F = \{ F: P \subset F, \dim(F \cap Y) \geq 1 \} \cup \{ F: Y \subset P+F \}$.

 \smallskip

 Size: $[3][n-2] - q[2]$.
 \item Let $X \in \m V^3_n$.
%
%
 Put $\m F = \{ F: \dim(F \cap X) \geq 2 \}$.

 \smallskip

 Size: $[3][n-2] - q[2]$.
 \item Let $P \in \m V^1_n$, $X \in \m V^3_n$, $Y \in \m V^5_n$, $P \subset X \subset Y$.

 \smallskip

 Put $\m F = \{F: P \subset F \subset Y \} \cup \{ F: P \subset F, \dim(F \cap X) = 2 \} \cup \{ F: F \subset Y, \dim(F \cap X) = 2 \}$.

 \smallskip

 Size: $[2][n-2] + (2q^4+q^3-q)$.
 \item Let $L \in \m V^2_n$, $Y \in \m V^5_n$, $L \subset Y$.

 \smallskip

 Put $\m F = \{ F: L \subset F \} \cup \{ F: F \subset Y, \dim(F \cap L) = 1 \}$.

 \smallskip

 Size: $[n-2] + q^2[2][3]$.
 \item Let $P_1, P_2 \in \m V^1_n$, $L \in \m V^2_n$, $X_1, X_2 \in \m V^3_n$, $Y \in \m V^4_n$, $Z_1, Z_2 \in \m V^5_n$, where $P_1, P_2 \subset L = X_1 \cap X_2$,
 $X_1, X_2 \subset Y = Z_1 \cap Z_2$.

 \smallskip

 Put $\m F = \{ F: L \subset F \} \cup \{ F: F \subset Y \} \cup \{ F: P_i \subset F \subset Z_1, \dim(F \cap X_i) \geq 2, i \in \{ 1, 2\} \} \cup \{ F: P_i \subset F \subset Z_2, \dim(F \cap X_j) \geq 2, \{i,j \} = \{ 1, 2\} \}$.

 \smallskip

 Size: $[n-2] + 5q^3 + q^2$.
 \item Let $P_1, P_2, P_3, Q_1, Q_2, Q_3, Q_4 \in \m V^1_n$, $L, L_1, L_2, L_3, \overline{L}_1, \overline{L}_2, \overline{L}_3 \in \m V^2_n$, $X \in \m V^3_n$, $Y \in \m V^5_n$, where
 $L, X \subset Y$, $\dim(L \cap X) = 0$, $P_1, P_2, P_3 \subset L$,
 $Q_1, Q_2, Q_3, Q_4 \subset X$ non-coplanar (i.e., any three span a distinct $3$-subspace),
 $L_1 = Q_1 + Q_2$, $\overline{L}_1 = Q_3+Q_4$,
 $L_2 = Q_1 + Q_3$, $\overline{L}_2 = Q_2+Q_4$,
 $L_3 = Q_1 + Q_4$, $\overline{L}_3 = Q_2+Q_3$.

 \smallskip

 Put $\m F = \{ F: L \subset F \} \cup \{ F: P_i \subset F \subset L+L_i, i \in \{1,2,3\}\}
 \cup \{ F: P_i \subset F \subset L+\overline{L}_i, i \in \{1,2,3\}\}$.

 \smallskip

 Size: $[n-2] + 6q^2$.
 \item Let $P_1, P_2 \in \m V^1_n$, $L \in \m V^2_n$, $X_1, X_2 \in \m V^4_n$,
 where $P_1, P_2 \subset L = X_1 \cap X_2$.

 \smallskip

 Put $\m F = \{ F: L \subset F \} \cup \{ F: P_1 \subset F, \dim(F \cap X_1), \dim(F \cap X_2) \geq 2\} \cup \{ F: P_2 \subset F \subset X_1 \} \cup \{ F: P_2 \subset F \subset X_2 \}$.

 \smallskip

 Size: $[n-2] + q^4 + 2q^3 + 3q^2$.
 \item Let $P_1, P_2 \in \m V^1_n$, $L, R_1, \ldots, R_{q+1}, R_1', \ldots, R_{q+1}' \in \m V^2_n$, $X \in \m V^4_n$,
 where $\dim(L \cap X) = 0$, $P_1, P_2 \subset L$, $R_1, \ldots, R_{q+1}, R_1', \ldots, R_{q+1}' \subset X$, $\dim(R_i \cap R_j') = 1$ for all $i,j \in \{ 1, \ldots, q+1 \}$,
 $\dim(R_i \cap R_j) = 0$ and $\dim(R_i' \cap R_j') = 0$ for all distinct $i,j \in \{ 1, \ldots, q+1 \}$ (in projective geometry, $\{ R_1, \ldots, R_{q+1} \}$ is called a \textit{regulus} and $\{ R_1', \ldots, R_{q+1}' \}$ is its \textit{opposite regulus}).

 \smallskip

 Put $\m F = \{ F: L \subset F \} \cup \{ F: P_1 \subset F \subset L+R_i, 1 \leq i \leq q+1\}
 \cup \{ F: P_2 \subset F \subset L+R_i', 1 \leq i \leq q+1\}$.

 \smallskip

 Size: $[n-2] + 2q^2[2]$.
 \item Let $X \in \m V^5_n$.
 Put $\m F = \{ F: F \subset X \}$.

 \smallskip

 Size: $\gauss{5}{3}$.
 \item Let $L_1, L_2 \in \m V^2_n$, $X \in \m V^4_n$, $Y_1, Y_2 \in \m V^5_n$,
 where $L_1, L_2 \subset X = Y_1 \cap Y_2$ and $\dim(L_1 \cap L_2) = 0$.

 \smallskip

 Put $\m F = \{ F: F \subset X \} \cup \{ F: L_1 \subset F \subset Y_1 \}
 \cup \{ F: L_2 \subset F \subset Y_1 \} \cup \{ F: F \subset Y_2, \dim(F \cap L_1) \geq 1, \dim(F \cap L_2) \geq 1 \}$.

 \smallskip

 Size: $q^4+3q^3+4q^2+q+1$.
\end{enumerate}

\end{document}